\theoremstyle{plain}
\newtheorem{thm}{Theorem}[section]
\newtheorem*{thma}{Theorem A}
\newtheorem*{thm*}{Theorem}
\newtheorem{lm}[thm]{Lemma}
\newtheorem{cor}[thm]{Corollary}
\newtheorem*{cor*}{Corollary}
\newtheorem{prop}[thm]{Proposition}
\newtheorem*{conj*}{Conjecture}
\newtheorem{conj}{Conjecture}
\theoremstyle{remark}
\newtheorem*{remark}{Remark}
\theoremstyle{definition}
\newtheorem*{defn*}{Definition}
\newtheorem{Remark}[thm]{Remark}
\newtheorem{I_Remark*}{Remark}
\newtheorem{defn}[thm]{Definition}
\newcommand{\nc}{\newcommand}
\newcommand{\beq}{\begin{equation}}
\newcommand{\eeq}{\end{equation}}
\newcommand{\bpmx}{\begin{pmatrix}}
\newcommand{\epmx}{\end{pmatrix}}
\newcommand{\bbmx}{\begin{bmatrix}}
\newcommand{\ebmx}{\end{bmatrix}}
\newcommand{\wh}{\widehat}
\newcommand{\wtd}{\widetilde}
\newcommand{\beqcd}[1]{\begin{equation*}\label{#1}\tag{#1}}
\newcommand{\eeqcd}{\end{equation*}}
\numberwithin{equation}{section}
\def\parref#1{\ref{#1}}
\def\thmref#1{Theorem~\parref{#1}}
\def\propref#1{Proposition~\parref{#1}}
\def\corref#1{Corollary~\parref{#1}}     \def\remref#1{Remark~\parref{#1}}
\def\secref#1{\S\parref{#1}}
\def\lmref#1{Lemma~\parref{#1}}
\def\subsecref#1{\S\parref{#1}}
\def\conjref#1{Conjecture~\parref{#1}}
\def\makeop#1{\expandafter\def\csname#1\endcsname
  {\mathop{\rm #1}\nolimits}\ignorespaces}
\def\Ord{{\mathrm{ord}}}
\def\Spec{\mathrm{Spec}\,}
\DeclareMathAlphabet{\mathpzc}{OT1}{pzc}{m}{it}
\DeclareSymbolFont{cyrletters}{OT2}{wncyr}{m}{n}
\DeclareMathSymbol{\SHA}{\mathalpha}{cyrletters}{"58}
\def\makebb#1{\expandafter\def
  \csname bb#1\endcsname{{\mathbb{#1}}}\ignorespaces}
\def\makebf#1{\expandafter\def\csname bf#1\endcsname{{\bf
      #1}}\ignorespaces}
\def\makegr#1{\expandafter\def
  \csname gr#1\endcsname{{\mathfrak{#1}}}\ignorespaces}
\def\makescr#1{\expandafter\def
  \csname scr#1\endcsname{{\EuScript{#1}}}\ignorespaces}
\def\makecal#1{\expandafter\def\csname cal#1\endcsname{{\mathcal
      #1}}\ignorespaces}
\def\doLetters#1{#1A #1B #1C #1D #1E #1F #1G #1H #1I #1J #1K #1L #1M
                 #1N #1O #1P #1Q #1R #1S #1T #1U #1V #1W #1X #1Y #1Z}
\def\doletters#1{#1a #1b #1c #1d #1e #1f #1g #1h #1i #1j #1k #1l #1m
                 #1n #1o #1p #1q #1r #1s #1t #1u #1v #1w #1x #1y #1z}
    \def\setminus{\smallsetminus}
\def\abs#1{\left|#1\right|}
\def\norm#1{\lVert#1\rVert}
\def\Fp{{\mathbb F}_p}
\def\Qbarp{\C_p}
\def\Qp{\Q_p}
\def\Qbar{\ol{\Q}}
\def\Zp{\Z_p}
\def\rmN{{\mathrm N}}
\def\cA{{\mathcal A}}  
\def\cD{\mathcal D}
\def\cE{{\mathcal E}}
\def\cF{{\mathcal F}}  
\def\cG{{\mathcal G}}
\def\cL{{\mathcal L}}
\def\cJ{\mathcal J}
\def\cK{{\mathcal K}}  
\def\cO{\mathcal O}
\def\cS{{\mathcal S}}
\def\cf{{\mathcal f}}
\def\cW{{\mathcal W}}
\def\cC{\mathcal C}
\def\cJ{\mathcal J}
\def\bfc{\mathbf c}
\def\bfK{\mathbf K}
\def\bfT{{\mathbf T}}
\def\bftheta{\boldsymbol{\theta}}
\def\sG{\mathscr G}
\def\sH{\mathscr H}
\def\sL{\mathscr L}
\def\sA{\mathscr A}
\def\sV{\mathscr V}
\def\sK{\mathscr K}
\def\sT{\mathscr T}
\def\bbI{\mathbb I}
\newcommand{\Z}{\mathbf Z}
\newcommand{\Q}{\mathbf Q}
\newcommand{\R}{\mathbf R}
\newcommand{\C}{\mathbf C}
\newcommand{\A}{\mathbf A}    
\def\fraka{{\mathfrak a}}
\def\frakc{{\mathfrak c}}
\def\frakf{\mathfrak f}
\def\frakp{{\mathfrak p}}
\def\frakP{\mathfak P}
\def\frakq{\mathfrak q}
\def\frakt{\mathfrak t}
\def\frakl{\mathfrak l}
\def\frakP{\mathfrak P}
\def\frakH{{\mathfrak H}}
\def\bfone{{\mathbf 1}}
\def\BS{Bruhat-Schwartz }
\def\Teich{Teichm\"{u}ller }
\def\Frob{\mathrm{Frob}}
\newcommand{\<}{\langle}   
\renewcommand{\>}{\rangle} 
\def\isoto{\stackrel{\sim}{\to}}
\def\ot{\otimes}
\def\hookto{\hookrightarrow}
\def\longto{\longrightarrow}
\def\ol{\overline}  \nc{\opp}{\mathrm{opp}} \nc{\ul}{\underline}
\newcommand{\pair}[2]{\< #1, #2\>}
\newcommand{\pairing}{\pair{\,}{\,}}
\def\XYmatrix{\xymatrix@M=8pt} 
\def\ncmd{\newcommand}
\ncmd{\xysubset}[1][r]{\ar@<-2.5pt>@{^(-}[#1]\ar@<2.5pt>@{_(-}[#1]}
\ncmd{\XYmatrixc}[1]{\vcenter{\XYmatrix{#1}}}
\ncmd{\xyto}[1][r]{\ar@{->}[#1]}
\ncmd{\xyinj}[1][r]{\ar@{^(->}[#1]}
\ncmd{\xysurj}[1][r]{\ar@{->>}[#1]}
\ncmd{\xyline}[1][r]{\ar@{-}[#1]}
\ncmd{\xydotsto}[1][r]{\ar@{.>}[#1]}
\ncmd{\xydots}[1][r]{\ar@{.}[#1]}
\ncmd{\xyleadsto}[1][r]{\ar@{~>}[#1]}
\ncmd{\xyeq}[1][r]{\ar@{=}[#1]} \ncmd{\xyequal}[1][r]{\ar@{=}[#1]}
\ncmd{\xyequals}[1][r]{\ar@{=}[#1]}
\ncmd{\xymapsto}[1][r]{l\ar@{|->}[#1]}\ncmd{\xyimplies}[1][r]{\ar@{=>}[#1]}
\ncmd{\xyiso}{\ar[r]_-{\sim}}
\def\injxy{\ar@{^(->}}
\newcommand{\pMX}[4]{\begin{pmatrix}
{#1}& {#2}\\
{#3}&{#4}\end{pmatrix} }
 \newcommand{\pDII}[2]{\begin{pmatrix}{#1}&0
 \\0&{#2}\end{pmatrix}}
\newcommand{\seesaw}[4]{{#1}\ar@{-}[rd]\ar@{-}[d]&{#2}\ar@{-}[d]\\
{#3}\ar@{-}[ru]&{#4}}
\def\ie{i.e. }
\def\cf{\mbox{{\it cf.} }}
\def\uf{\varpi} 
\def\Abs{{|\!\cdot\!|}} 
\def\ndivides{\nmid}
\def\x{{\times}}
\def\onehalf{{\frac{1}{2}}}
\def\e{\varepsilon} 
\def\al{\alpha}
\def\Lam{\Lambda}
\def\om{\omega}
\def\iso{\simeq}
\def\con{\equiv}
\def\bksl{\backslash}
\newcommand\stt[1]{\left\{#1\right\}}
\def\ep{\epsilon}
\def\lam{\lambda}
\def\sg{\sigma}
\def\disjoint{\sqcup}
\newcommand{\powerseries}[1]{\llbracket{#1}\rrbracket}
\renewcommand\pmod[1]{\,(\mbox{mod }{#1})}
\renewcommand\Re{\text{Re}\,}
\newcommand\Dmd[1]{\left<{#1}\right>} 
\def\Cp{\C_p}
 \title[The derivative formula of $p$-adic $L$-functions at trivial zeros]{The derivative formula of $p$-adic $L$-functions for imaginary quadratic fields at trivial zeros}
\author[M. Chida]{Masataka Chida}
\address{
Tokyo Denki University, Tokyo 120-8551, Japan.
}
\email{chida@mail.dendai.ac.jp}
\author[M.-L. Hsieh]{Ming-Lun Hsieh}
\address{Institute of Mathematics, Academia Sinica, Taipei 10617, Taiwan}
\email{mlhsieh@math.sinica.edu.tw}
\subjclass[2000]{Primary 11F33,11R23}
\def\Mat{\mathrm{M}}
\def\rmH{{\rm H}}
\def\rmd{\mathrm{d}}
\def\pbar{{\ol{\frakp}}}
\def\loc{{\rm loc}}
\def\cyc{\varepsilon_{\rm cyc}}
\def\Cp{\ol{\Q}_p}
\def\Frob{{\rm Fr}}
\def\Om{\boldsymbol \om}
\def\rmt{{\rm t}}
\def\frake{\mathfrak e}
\def\brch{\phi}
\def\e{\boldsymbol\varepsilon}
\def\cyc{\e_{\rm cyc}}
 \def\aA{\cC(\brch,\brch^c)}
\def\bB{\cC(\brch^c,\brch)}
\def\ux{\mathfrak u^\chi}
\def\vx{\mathfrak u^\chi_{\pbar}}
\thanks{Hsieh was partially supported by a MOST grant MOST 108-2628-M-001-009-MY4 and 110-2628-M-001-004-. Chida was supported by JSPS KAKENHI Grant Number JP18K03202.}
\begin{document}

 \maketitle
 To Bernadette Perrin-Riou, on her 65th birthday
 \selectlanguage{english}
 \begin{abstract}The rank one Gross conjecture for Deligne-Ribet $p$-adic $L$-functions was solved in works of Darmon-Dasgupta-Pollack and Ventullo by the Eisenstein congruence among Hilbert modular forms. The purpose of this paper is to prove an analogue of the Gross conjecture for the Katz $p$-adic $L$-functions attached to imaginary quadratic fields via the congruences between CM forms and non-CM forms. The new ingredient is to apply the $p$-adic Rankin-Selberg method to construct a non-CM Hida family which is congruent to a Hida family of CM forms at the $1+\varepsilon$ specialization. 

\medskip

\selectlanguage{french}
\noindent\textsc{R\'esum\'e.}
Le conjecture de Gross en rang 1 pour les fonctions $L$ $p$-adiques de 
Deligne-Ribet a \'et\'e r\'esolue par Darmon-Dasgupta-Pollack et Ventullo au moyen de congruences d'Eisenstein parmi les formes 
modulaires de Hilbert. Le but de cet article est de prouver un analogue 
de la conjecture de Gross pour les fonctions $L$ $p$-adiques de Katz des 
corps quadratiques imaginaires, via les congruences entre formes CM et 
formes non-CM. Le nouvel ingr\'edient est l'application de la m\'ethode de 
Rankin-Selberg $p$-adique pour construire une famille de Hida non-CM qui 
est congruente \`a une famille de Hida de formes CM pour la sp\'ecialisation 
$1+\varepsilon$.
\end{abstract}
\selectlanguage{english}
\tableofcontents
\section{Introduction}
In a remarkable paper \cite{DDP11Ann}, Darmon, Dasgupta and Pollack applied the congruence between Eisenstein series and cusp forms to prove the rank one Gross conjecture for Deligne-Ribet $p$-adic $L$-functions with some assumptions, which were later removed by \cite{Ventullo15CMH}. The purpose of this paper is to apply their ideas in the setting of CM congruence to prove an analogue of Gross conjecture for the cyclotomic Katz $p$-adic $L$-functions associated with ring class characters of imaginary quadratic fields. To begin with, we let $K$ be an imaginary quadratic field and let $p>2$ be a rational prime. Fixing an isomorphism $\iota_p:\C\iso \Cp$ once and for all, let $\frakp$ be the prime above $p$ induced by $\iota_p$. We shall assume that \[p\cO_K=\frakp\pbar,\quad\frakp\neq\pbar.\]  Let $\frakf$ be a prime-to-$p$ ideal of $\cO_K$. Let $K(\frakf)$ and $K(p^\infty)$ be the ray class fields of $K$ of conductor $\frakf$ and $p^\infty$. To any $p$-adic  character $\wh\phi:\Gal(K(p^\infty)/K)\to\Cp^\x$ which is Hodge-Tate,  one can associate a character $\phi:{\rm W}_K\to\C^\x$ of the Weil group ${\rm W}_K$ of $K$ unramified outside places above $p$ with $\wh \phi(\Frob_\frakq)=\iota_p(\phi(\Frob_\frakq))$ for $\frakq\nmid p$, where $\Frob_\frakq$ denotes a geometric Frobenius at a prime $\frakq$. The character $\wh\phi$ is the $p$-adic avatar of $\phi$ (\cf \cite[page 190]{HidaTil93ASENS}). Let $\cW$ be a finite extension of the Witt ring $W(\ol{\Fp})$. Let $\chi:\Gal(K(\frakf)/K)\to \cW^\x$ be a primitive ray class character modulo $\frakf$. The works in \cite{Katz78Inv}, \cite{Shalit87book} and \cite{HidaTil93ASENS} have proved the existence of a (two-variable) Katz $p$-adic $L$-function $\cL_p(\chi)$ in the Iwasawa algebra $\cW\powerseries{\Gal(K(p^\infty)/K)}$ characterized uniquely by the following interpolation property: there exists a pair $(\Omega_p,\Omega_\infty)\in \cW^\x\times \C^\x$ such that for any $p$-adic character $\wh\phi:\Gal(K(p^\infty)/K)\to \Cp^\x$ which is crystalline of Hodge-Tate weight $(-k-j,j)$ with either $k\geq 1$ and $j\geq 0$ or $k\leq 1$ and $k+j>0$,
 \beq\label{E:interp1}\frac{\wh\phi(\cL_p(\chi))}{\Omega_p^{k+2j}}=\frac{1}{2(\sqrt{-1})^{k+j}}(1-\chi\phi(\Frob_\pbar))(1-\chi\phi(\Frob_\frakp^{-1})p^{-1})\cdot \frac{L(0,\chi\phi)}{\Omega_\infty^{k+2j}}.\eeq 
 Here $L(s,\chi\phi)$ is the \emph{complete} $L$-function of $\chi\phi$ (\cf\cite[\S 3]{Tate79Corvallis}). Let $K^+_\infty$ be the cyclotomic $\Zp$-extension of $K$. Let $\cyc:\Gal(K(p^\infty)/K)\to \Gal(K_\infty^+/K)\to \Zp^\x$ be the $p$-adic cyclotomic character. Define the cyclotomic $p$-adic $L$-function $L_p(-,\chi):\Zp\to\cW$ by 
 \[L_p(s,\chi):=\cyc^s(\cL_p(\chi)).\]
In the remainder of the introduction, we suppose that \beq\label{E:11}\chi\neq \bfone\text{ and }\chi(\Frob_\pbar)=1.\eeq
The assumption \eqref{E:11} implies that $L_p(0,\chi)=0$ by the $p$-adic Kronecker formula, and in view of Gross' conjecture for Deligne-Ribet $p$-adic $L$-functions, it is tempting to expect the leading coefficient of the Taylor expansion of $L_p(s,\chi)$ at $s=0$ to be connected with certain $\sL$-invariant, or rather $p$-adic regulator, and special values of a $L$-function. Along this direction, the work \cite{BS} provides an affirmative answer in most cases. We would like to remark that the results of \cite{BS} indeed include more general CM fields assuming some major open conjectures in algebraic number theory. We recall the (cyclotomic) $\sL$-invariant associated with $\chi$ introduced in \cite[Remark 1.5 (ii)]{BS}.  
Let $H=K(\frakf)$ be the ray class field of conductor $\frakf$ and $\cO_{H,\pbar}^\x$ be the group of $\pbar$-units. Put
\[\cO_{H,\pbar}^\x[\chi]:=\stt{u\in \cO_{H,\pbar}^\x\ot_\Z \Cp\mid (\sg\ot 1)u=(1\ot\chi(\sg))u\text{ for all }\sg\in\Gal(H/K)}.\]
We have $\cO_{H,\pbar}^\x[\chi]=\rmH^1_{\stt{\pbar}}(K,\chi^{-1}(1))$ via Kummer map.
The dimension of  the space is given by $\dim_{\Cp}\cO_{H,\pbar}^\x[\chi]=2$. Let $\frakP$ be the prime of $\cO_H$ induced by $\iota_p$. By Dirichlet's units Theorem, we can choose a basis $\stt{\ux,\vx}$ with $\ux\in \cO_H^\x\ot_\Z\Cp$ and $\vx\in \cO_{H,\ol{\frakP}}^\x\ot\Cp$ with $\Ord_{\ol{\frakP}}(\vx)\neq 0$. Let $c$ denote the complex conjugation. Define the $p$-adic logarithms $\log_\frakp,\,\log_{\pbar}:H^\x\ot_\Z\Cp\to \Cp$ by \[\log_\frakp(x\ot\alpha)=\log_p(\iota_p(x))\alpha;\quad \log_{\pbar}(x\ot\alpha):=\log_\frakp(\ol{x}\ot \alpha).\] 
Let $V_\chi$ be the kernel of $\log_\frakp:\cO_{H,\pbar}^\x[\chi]\to \Cp$. Then $V_\chi$ is a one dimensional space generated by \beq\label{E:1gen}u:=\vx\cdot (1\ot \log_{\frakp}\ux)-\ux\cdot (1\ot \log_{\frakp} \vx).\eeq
The $\sL$-invariant $\sL(\chi)$ is defined by 
\beq\label{E:Linv}\sL(\chi):=-\frac{\log_{\pbar}(u)}{\Ord_{\pbar}(u)}=\frac{1}{\log_{\frakp}\ux\cdot\Ord_{\ol{\frakP}}(\vx)}\cdot\det\pMX{\log_{\frakp} \vx}{\log_\frakp \ux}{\log_{\pbar} \vx}{\log_{\pbar} \ux}\eeq
(\cf\cite[(45), page 36]{BeD21Adv}). Note that $\sL(\chi)$ is a Gross-style regulator for imaginary quadratic fields. The above definition does not depend on the choice of basis. Let $\varphi_\frakf(\cO_K)$ be the Robert's unit in $H^\x\ot_\Z \Cp$ introduced in \cite[page 55, (17)]{Shalit87book} and put
\beq\label{E:Robert}\mathfrak e_\chi:=\sum_{\sg\in \Gal(H/K)}\sg(\varphi_\frakf(\cO_K))\ot \chi^{-1}(\sg)\in (\cO_H^\x\ot\Cp)[\chi].\eeq
We have the following Gross conjecture in the setting of imaginary quadratic fields.
\begin{conj}\label{conj:main}For all primitive ray class characters $\chi$ of $K$ modulo $\frakf$ satisfying \eqref{E:11}, we have 
 \[\frac{L_p(s,\chi)}{s}\Big\vert_{s=0}=\frac{-1}{12w_\frakf}\left(1-\frac{\chi(\Frob_\frakp^{-1})}{p}\right)\log_\frakp \mathfrak e_\chi\cdot\sL(\chi).\]
 Here $w_\frakf$ is the number of units in $\cO_K^\x$ congruent to $1$ modulo $\frakf$.
\end{conj}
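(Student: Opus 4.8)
The plan is to adapt the method of Darmon--Dasgupta--Pollack \cite{DDP11Ann} and Ventullo \cite{Ventullo15CMH}, replacing the congruence between Eisenstein series and cusp forms by a congruence between a CM Hida family and a non-CM Hida family, the latter manufactured by the $p$-adic Rankin--Selberg method.

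First I would assemble the CM theta series attached to $\chi$ and its cyclotomic twists into a branch $\bftheta$ of a CM Hida family over the cyclotomic Iwasawa algebra $\Lambda=\cW\powerseries{\Gal(K^+_\infty/K)}$, whose $\Lambda$-adic Galois representation is $\Ind_{G_K}^{G_\Q}$ of $\chi$ times the tautological $\Lambda$-adic cyclotomic character. On a decomposition group at $p$ this splits into the $\frakp$-part and the $\pbar$-part, and hypothesis \eqref{E:11} says precisely that the $\pbar$-part specializes at $s=0$ to the trivial unramified character: this is the local trivial-zero configuration, and it is what forces a non-CM companion to appear. On the analytic side, the $p$-adic Kronecker limit formula writes $L_p(s,\chi)$, up to Euler factors and the constant $-1/(12w_\frakf)$, as $\log_\frakp$ of the Robert unit $\mathfrak e_\chi$; the vanishing $L_p(0,\chi)=0$ is thus caused by the Euler factor $1-\chi(\Frob_\pbar)$, while $\log_\frakp\mathfrak e_\chi$ persists into the derivative and appears on the right-hand side of the formula.

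The new input is to produce, by forming a $\Lambda$-adic Rankin--Selberg convolution of $\bftheta$ against a suitable $\Eis$-family and taking its ordinary projection, a \emph{non-CM} branch $\bF$ of a Hida family over a finite extension of $\Lambda$ that is congruent to $\bftheta$ at the $1+\varepsilon$ specialization (the first-order neighborhood of $s=0$), with the length of the associated congruence module governed by $L_p(s,\chi)/s\vert_{s=0}$. Here the CM constituent of the convolution is $\bftheta$ itself; the deviation from being CM is measured by the Rankin--Selberg $p$-adic $L$-function, which by the factorization of the relevant Dedekind-type $L$-function is built out of the Katz $p$-adic $L$-function $\cL_p(\chi)$; and the normalization by the $\Lambda$-adic Petersson norm of $\bftheta$ contributes exactly the Euler factor $1-\chi(\Frob_\frakp^{-1})/p$ and $\log_\frakp\mathfrak e_\chi$, through Katz's $p$-adic second Kronecker limit formula \cite{Katz78Inv,Shalit87book}. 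Consequently $L_p(s,\chi)/s\vert_{s=0}$ equals, up to the constant $-1/(12w_\frakf)$ and the factors $1-\chi(\Frob_\frakp^{-1})/p$ and $\log_\frakp\mathfrak e_\chi$, the length of the congruence module between $\bF$ and $\bftheta$.

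On the Galois side, the congruence produces a nonzero extension of $\chi^{-1}$-type, that is, a class in $\rmH^1_{\stt{\pbar}}(K,\chi^{-1}(1))=\cO_{H,\pbar}^\x[\chi]$; ordinarity of $\bF$ at $\frakp$ forces it to have vanishing $\log_\frakp$, so it spans the line $V_\chi$ generated by the unit $u$ of \eqref{E:1gen}. The leading term of $\rho_{\bF}$ along the cyclotomic variable at $s=0$ then decomposes, with respect to the decomposition group at $\pbar$, into an unramified part, namely the infinitesimal variation of the Frobenius eigenvalue that produces the trivial zero, and a Hodge--Tate part coming from the variation of the weight; their ratio is $-\log_{\pbar}(u)/\Ord_{\pbar}(u)=\sL(\chi)$ by \eqref{E:Linv}. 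Combining this with the congruence-module computation of the previous step yields the stated identity. I expect the main obstacle to be the Rankin--Selberg construction of $\bF$: one must control the $\Lambda$-adic Petersson pairing, establish the non-vanishing that guarantees the existence of a genuinely non-CM companion, make sure the output lies in the cuspidal ordinary part rather than the Eisenstein part, and pin down the length of the congruence module as exactly $L_p(s,\chi)/s\vert_{s=0}$ with the correct normalization $\log_\frakp\mathfrak e_\chi$. By contrast, once this deformation-theoretic picture is in place, the extraction of $\sL(\chi)$ from $\rho_{\bF}$ by local and global Tate duality is, as in \cite{Ventullo15CMH,BeD21Adv}, essentially forced.
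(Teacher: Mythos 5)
Your overall architecture --- a CM congruence inside a Hida family, a non-CM companion manufactured by the $p$-adic Rankin--Selberg method, and extraction of $\sL(\chi)$ from the local behaviour at $\pbar$ of a global cohomology class --- is indeed the architecture of the paper. However, two of your steps, as stated, would fail.

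First, you build the CM family out of $\chi$ itself, so that the relevant $\Lambda$-adic representation is (a twist of) $\Ind_{G_K}^{G_\Q}(\chi\cdot\Psi^{\rm univ})$. Over $G_K$ this is residually $\chi\oplus\chi^c$, so the extension class that the congruence produces lies in $\rmH^1(K,\chi^{1-c})=\rmH^1(K,\chi^{2})$ for a ring class character --- not in $\rmH^1(K,\chi)$. The paper's essential device is to choose an auxiliary ray class character $\brch$ with $\chi=\brch^{1-c}$ and to run the whole construction with $\bftheta_\brch$ and $\bftheta_{\brch^c}$; this is exactly why Theorem A is restricted to ring class characters, and your sketch never introduces $\brch$. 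Relatedly, the class the congruence yields is an element of the one-dimensional space $\rmH^1(K,\chi)$, not of the two-dimensional unit space $\cO_{H,\pbar}^\x[\chi]=\rmH^1_{\{\pbar\}}(K,\chi^{-1}(1))$: the units enter only through the global reciprocity pairing of \lmref{L:41}, which converts the coordinates of $\loc_\pbar(\kappa)=x\cdot\Ord_p+y\cdot\log_p$ into the determinant in \eqref{E:Linv}. Your step ``ordinarity at $\frakp$ forces vanishing of $\log_\frakp$, so the class spans $V_\chi$'' has no counterpart and, in the space where the class actually lives, does not make sense.

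Second, the analytic bookkeeping is not the one you assert. The Rankin--Selberg coefficient $\bB$ of \propref{P:3RS} is a ratio of \emph{two-variable} Katz $p$-adic $L$-functions, and the derivative the construction computes is $\cL_p'(s,-s,\chi)\vert_{s=0}$ (\corref{C:3RS}) --- a direction transverse to the cyclotomic line --- not $L_p(s,\chi)/s\vert_{s=0}=\cL_p'(s,s,\chi)\vert_{s=0}$. Recovering the cyclotomic derivative requires the improved $p$-adic $L$-function $\cL_\frakp^*$, the identity $\cL_p'(s,s,\chi)\vert_{s=0}=2\cL_p'(s,0,\chi)\vert_{s=0}-\cL_p'(s,-s,\chi)\vert_{s=0}$, and the extra term $2\sL(\bfone)$ arising from the local derivatives of the universal character at $\frakp$ and $\pbar$ (\lmref{L:43}); the output is $\sL(\chi)=2\sL(\bfone)-\cL_p'(s,-s,\chi)\vert_{s=0}/\cL_\frakp^*(0,\chi)$ (\thmref{T:43}), from which the conjectured formula follows via \eqref{E:improved2}. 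Your claim that ``the length of the congruence module is exactly $L_p(s,\chi)/s\vert_{s=0}$ with normalization $\log_\frakp\frake_\chi$'' collapses this two-variable structure into a single statement that the method does not deliver directly, and it also misattributes the factor $(1-\chi(\Frob_\frakp^{-1})/p)\log_\frakp\frake_\chi$ to the Petersson norm, whereas it comes from the factor $\cL_p(s,0,\chi)$ of the Rankin--Selberg $L$-value via \eqref{E:improved}.
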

When $p$ does not divide the class number of $K$, a proof of \conjref{conj:main} is given in \cite[Theorem 1.8]{BS}.  In this paper, we offer an entirely different proof of \conjref{conj:main} for ring class characters, removing the hypothesis on $p$-indivisibility of the class number. 
\begin{thma}\label{T:main}Let $d_K$ be the fundamental discriminant of $\cO_K$. Suppose that $\chi$ is a ring class character and that $(\frakf,d_K)=1$. Then \conjref{conj:main} holds. \end{thma}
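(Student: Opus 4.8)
The plan is to transpose the strategy of Darmon--Dasgupta--Pollack from Eisenstein congruences to CM congruences, the new tool being the $p$-adic Rankin--Selberg method. First, by \eqref{E:11} the interpolation formula \eqref{E:interp1} has a vanishing Euler factor $1-\chi(\Frob_\pbar)=0$, so $L_p(0,\chi)=0$ and the content of \conjref{conj:main} is the evaluation of $L_p'(0,\chi)$. The relevant automorphic object is the weight-one theta series $\theta_\chi$ attached to $\chi$, which is a genuine newform because $(\frakf,d_K)=1$, with $\rho_{\theta_\chi}\cong\Ind_K^{\Q}\chi$; since $\chi(\Frob_\pbar)=1$, its restriction to a decomposition group at $p$ splits as $\chi_\frakp\oplus\mathbf 1$, and one works with the $p$-stabilization $\theta_\chi^\circ$ of $U_p$-eigenvalue $\chi(\Frob_\pbar)=1$, the one whose ordinary quotient is the trivial character. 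This is exactly the stabilization whose cyclotomic Mazur--Kitagawa $p$-adic $L$-function is $L_p(s,\chi)$, and the trivial zero is its exceptional zero in the sense of Mazur--Tate--Teitelbaum. The form $\theta_\chi^\circ$ is the weight-one specialization of a CM Hida family $\mathbf g$ whose Galois representation is induced from a $\Lambda$-adic Hecke character of $K$ deforming $\chi$.

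The crux, and the main obstacle, will be to construct via the $p$-adic Rankin--Selberg method an ordinary $\Lambda$-adic cusp form $\mathbf f$ which is \emph{not} CM but is congruent to $\mathbf g$ at the $1+\varepsilon$ specialization. Concretely, I would take $\mathbf f$ to be the ordinary holomorphic projection of the product of $\theta_\chi^\circ$ with a $\Lambda$-adic Eisenstein family of infinitesimal weight $\varepsilon$, so that $\mathbf f$ has weight $1+\varepsilon$, specializes at $\varepsilon=0$ to $\theta_\chi^\circ$, is non-CM for $\varepsilon\neq 0$, and meets the CM branch $\mathbf g$ at $\theta_\chi^\circ$. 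Producing such a non-CM family and controlling the congruence precisely enough to extract the derivative is where the argument replaces the class-number hypothesis of \cite{BS}: the vanishing $L_p(0,\chi)=0$ is precisely what forces the CM/non-CM congruence --- in the spirit of Ribet's converse to Herbrand's theorem --- and carrying this out through the Rankin--Selberg construction rather than via $p$-indivisibility of $h_K$ is what removes the hypothesis $p\nmid h_K$.

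The last step is to extract the derivative formula from this congruence following Greenberg--Stevens, the two transcendental quantities on the right-hand side coming from two separate pieces of the crossing datum. The $U_p$-eigenvalue $a_p(\varepsilon)$ of $\mathbf f$ satisfies $a_p(0)=1$, and its first-order variation $a_p'(0)$ is a Greenberg-type $\sL$-invariant of the deformation $\mathbf f$ of $\Ind_K^{\Q}\chi$; I would compute it by deformation theory and Galois cohomology, identifying the tangent direction of $\mathbf f$ at $\theta_\chi^\circ$ with a class in $\rmH^1(K,\chi^{-1}(1))=\cO_{H,\pbar}^\x[\chi]$, observing that the $p$-ordinarity (Greenberg--Panchishkin) condition at $\frakp$ forces it into the line $V_\chi=\ker(\log_\frakp)$ spanned by $u$ in \eqref{E:1gen}, so that its $\pbar$-local invariant $-\log_\pbar(u)/\Ord_\pbar(u)$ is the Gross regulator $\sL(\chi)$ of \eqref{E:Linv}. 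The companion factor $\log_\frakp\mathfrak e_\chi$ enters through the normalization of the Rankin--Selberg integral: the weight-$\varepsilon$ Eisenstein input contributes, via the $p$-adic Kronecker limit formula for Robert's units --- which here plays the role that the finite-part $L$-value plays in the rank-one Gross conjecture --- precisely the $p$-adic logarithm of $\mathfrak e_\chi$ of \eqref{E:Robert}, while the constant $-\tfrac{1}{12w_\frakf}$ is assembled from the constant term of the weight-two Eisenstein series ($\zeta(-1)=-\tfrac{1}{12}$) and the order $w_\frakf$ of the group of $\frakf$-units, and $1-\chi(\Frob_\frakp^{-1})/p$ is the surviving interpolation factor from \eqref{E:interp1}. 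Matching this with the $\varepsilon$-derivative of the Rankin--Selberg $p$-adic $L$-function --- which along the CM locus $\varepsilon=0$ is expressed through $L_p(s,\chi)$, inherits its trivial zero, and whose vanishing along a line through the centre is forced by the functional equation --- converts $a_p'(0)$ into $L_p'(0,\chi)$ and yields \conjref{conj:main}. Everything beyond the construction of $\mathbf f$ --- the weight-one lifting and level/nebentypus bookkeeping for $\theta_\chi^\circ$, the Galois-cohomological identification of the $\sL$-invariant, and the final comparison of constants --- is laborious but follows established patterns; the construction of the non-CM family and the control of its congruence with $\mathbf g$ remain the genuine difficulty.
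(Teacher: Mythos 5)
Your proposal has the right philosophy (replace the Eisenstein congruence of Darmon--Dasgupta--Pollack by a CM congruence, produce the congruent object by the $p$-adic Rankin--Selberg method, read off the $\sL$-invariant from the derivative of the $U_p$-eigenvalue, and get $\log_\frakp\mathfrak e_\chi$ from the $p$-adic Kronecker limit formula), and you correctly isolate the construction of the congruence as the crux. But there are three concrete gaps. First, the theta series you start from is the wrong one. The paper does not deform $\Ind_K^\Q\chi$: it writes $\chi=\brch^{1-c}$ for an auxiliary ray class character $\brch$ (this is exactly where the ring class hypothesis enters) and works with $\bftheta_\brch$ and $\bftheta_{\brch^c}$. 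The reason is that the off-diagonal entries of a deformation of $\Ind_K^\Q\brch$ produce classes in $\rmH^1(K,\brch^{1-c})=\rmH^1(K,\chi)$, the one-dimensional group that pairs against $\cO_{H,\pbar}^\x[\chi]$ in Lemma \ref{L:41}. If you deform $\Ind_K^\Q\chi$ instead, the reducibility classes land in $\rmH^1(K,\chi^{1-c})=\rmH^1(K,\chi^2)$ (since $\chi^c=\chi^{-1}$ for a ring class character), which is the wrong coefficient module; and the class you want is \emph{not} the tangent vector in $\rmH^1(K,\chi^{-1}(1))$ you describe --- the units enter only through the global reciprocity pairing with the constructed class in $\rmH^1(K,\chi)$.

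Second, no genuine non-CM Hida family crossing the CM branch is constructed, and none is needed: the paper takes $e_\Ord(\theta_\brch^\circ\cG_C)$, subtracts its projections $\aA\bftheta_\brch+\bB\bftheta_{\brch^c}$ onto the two CM newforms, and shows that the remainder $\sH\in\bfS^\perp$ is a Hecke eigenform only \emph{modulo} $X^{n+2}$, the eigensystem being pinned down by the explicit value of $\bB$; the cohomology class is then extracted by a Ribet-style generalized-matrix-algebra argument (Theorem \ref{T:Ribet}), with a case analysis on the $P$-adic valuations of $\aA^{-1}$ and $\bB^{-1}$. Your proposed object --- an honest family, non-CM away from $\varepsilon=0$, meeting $\mathbf g$ --- is both stronger than what is available and not what the argument uses. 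Third, the passage from the $U_p$-derivative to $L_p'(0,\chi)$ is not a Greenberg--Stevens functional-equation argument (there is no forced vanishing along a central line here); it comes from the closed Rankin--Selberg formula of Proposition \ref{P:3RS}, which expresses $\bB^{-1}(s)$ as a ratio of the two-variable Katz $p$-adic $L$-function restricted to the lines $t=0$ and $t=-s$, combined with $\cL_p(s,s,\chi)=L_p(s,\chi)$ and the improved $p$-adic $L$-function \eqref{E:improved}--\eqref{E:improved2}; the constant $-1/(12w_\frakf)$ comes from that limit formula, not from $\zeta(-1)$.
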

Regarding the non-vanishing of $\sL$-invariants, we remark that it is shown in \cite[Proposition 1.11]{BeD21Adv} that either $\sL(\chi)$ or $\sL(\chi^{-1})$ is non-zero and that the $\sL$-invariant $\sL(\chi)$ is non-zero if the Four Exponentials Conjecture holds.

The proof in \cite{BS} requires the full arsenal of Iwasawa theory for imaginary quadratic fields and the existence of elliptic units. In the case of general CM fields, their method relies on the existence of Rubin-Stark units in ray class fields, which is one of the major open conjectures in algebraic number theory. In contrast, we adapt the ideas in \cite{DDP11Ann}, replacing the Eisenstein congruence with the CM congruence for elliptic modular forms. This approach is inspired by a series of works of Hida and Tilouine \cite{HidaTil91Durham}, \cite{HidaTil93ASENS} and \cite{HidaTil94Inv} on CM congruences and the anticyclotomic main conjecture for CM fields and a recent work \cite{BeD21Adv}. This method is \emph{units-free} and more amenable to general CM fields as in \cite{DDP11Ann} at least under some suitable Leopoldt conjecture (see \cite{BHsi} for the case of general CM fields). We now give a sketch of the proof of \thmref{T:main}.\subsection*{Cohomological interpretation of the $\sL$-invariant}
Following the discussion on the anticyclotomic $\sL$-invariants in \cite[\S 1.4]{BeD21Adv}, the staring point is the observation $\dim_F\rmH^1(K,\chi)=1$ by the global Poitou-Tate duality. Let $\kappa\neq 0\in \rmH^1(K,\chi)$. Write $\loc_\pbar(\kappa)\in \rmH^1(K_\pbar,\chi)=\Hom(G_{K_\pbar},\Cp)$. Let $\kappa_{\rm ur}:G_{K_\pbar}\to E$ be the unique unramified homomorphism sending the geometric Frobenius $\Frob_\pbar$ to $1$ and $\kappa_{\rm cyc}$ be the $p$-adic logarithm of the $p$-adic cyclotomic character. Then
\[\loc_{\pbar}(\kappa)=x\cdot \kappa_{\rm ur}+y\cdot \kappa_{\rm cyc}.\]
In \lmref{L:41}, we show that $y\neq 0$ and
\beq\label{E:10}\sL(\chi)=\frac{x}{y}.\eeq
Therefore, to prove \thmref{T:main} we need to construct a non-zero cohomology class $\kappa$ whose $x$ and $y$ coordinates can be evaluated explicitly and related to the derivatives of the Katz $p$-adic $L$-functions.

\subsection*{$\sA$-adic modular forms and construction of cohomology classes}The construction of $\kappa$ relies on the idea in \cite{HidaTil94Inv} of using the congruence between $p$-adic families of CM forms and non-CM forms to prove anticyclotomic main conjectures.  Let $\sA$ be the ring of rigid analytic functions on the closed unit disk $\stt{s\in\Cp\mid \abs{s}_p\leq 1}$. For an integer $k\geq 1$, a prime-to-$p$ positive integer $N$ and a Dirichlet character $\xi$ modulo $N$, let $S_k(N,\chi)$ be the space of elliptic cusp forms of weight $k$, level $N$ and character $\xi$. Denote by $\bfS(N,\xi)$ the space of ordinary $\sA$-adic modular forms of tame level $N$ and character $\xi$, consisting of $q$-expansion $\cF(s)(q)\in \sA\powerseries{q}$ such that $\cF(k)(q)$ is the $q$-expansion of some $p$-ordinary elliptic cusp form of weight $k+1$, level $Np$ for all but finitely many $k\con 0\pmod{p-1}$. Since $\chi$ is assumed to be a ring class character, we can write $\chi=\brch^{1-c}$ for some ray class character $\brch$ of conductor $\frakc$ prime to $d_Kp$. Note that the choice of $\brch$ is not unique. Let $N=d_K\rmN\frakc$ and $\xi:=\tau_{K/\Q}\brch_+$, where $\tau_{K/\Q}:(\Z/d_K\Z)^\x\to\C^\x$ is the quadratic character associated with $K/\Q$ and $\brch_+:(\Z/\rmN\frakc\Z)^\x\to\C^\x$ is given by $\brch_+(a)=\brch(a\cO_K)$. Let $\bftheta_\brch$ and $\bftheta_{\brch^c}$ be $\sA$-adic CM forms in $\bfS(N,\brch_+\tau_{K/\Q})$ associated with $\brch$ and $\brch^c$ defined in \eqref{E:24}. Let $\sK=\Frac\sA$. The theory of $\sA$-adic newforms yields a decomposition of Hecke modules 
\beq\label{E:40}\bfS(N,\brch_+\tau_{K/\Q})=\sK\bftheta_\brch\oplus\sK\bftheta_{\brch^c}\oplus \bfS^\perp.\eeq
The submodule $\bfS^\perp$ interpolates the orthogonal complement of the space spanned by $\bftheta_\brch$ and $\bftheta_{\brch^c}$.
Let $\bfT^\perp$ be the $\sA$-algebra generated by the Hecke operators acting on $\bfS^\perp$. 
Suppose we are given a Hecke eigensystem $\lam:\bfT^\perp\to\sA^\dagger/(s^2)$ and a character $\Psi:G_K\to \sA^\dagger/(s^2)$ such that 
\begin{itemize}\item[(a)] $\Psi\con \phi\pmod{s}$,
\item[(b)] $\lam(T_\ell)=\Psi(\Frob_\frakl)+\Psi(\Frob_{\ol{\frakl}})$ for $\ell=\frakl\ol{\frakl}$ split in $K$.
\end{itemize}
Write $\Psi=\phi(1+\psi' s)\pmod{s^2}$. In \thmref{T:Ribet}, we use the argument in \cite[\S 4]{DDP11Ann} to construct a non-zero cohomology class $\kappa\in \rmH^1(K,\chi)$ such that \beq\label{E:20}\loc_{\pbar}(\kappa)=\psi'|_{G_{K_{\pbar}}}-\brch(\pbar)^{-1}\lam(U_p)'(0)\cdot \kappa_{\rm ur}.\eeq
Here $\lam(U_p)'(0)$ is the first derivative of the $U_p$-eigenvalue $\lam(U_p)$ at $s=0$.

\subsection*{Construction of Hecke eigenforms modulo $s^2$}The problem boils down to constructing a Hecke eigensystem $\lam:\bfT^\perp\to\sA^\dagger/(s^2)$ as above and computing the derivative of the $U_p$-eigenvalue $\lam(U_p)$. This is the main bulk of this paper and is achieved by applying the $p$-adic Rankin-Selberg method. For any $C\mid N$, let $\sG_C(s)\in \sA\powerseries{q}$ be the $q$-expansion defined by \[\sG_C(s)=1+2\zeta_p(1-s)^{-1}\sum_{n=1
}^\infty\left(\sum_{d\mid n,p\ndivides n}d^{-1}\Dmd{d}^{s}\right)q^{Cn},\]
where $\zeta_p(s)$ is the $p$-adic Riemann zeta function. For $k\geq 2$, $\sG_C(k)$ is the $q$-expansion of an $p$-ordinary Eisenstein series of weight $k$ and level $\Gamma_0(Cp)$. From the spectral decomposition of  $e_\Ord(\theta_\brch^\circ\sG_C)\in \bfS(N,\brch_+\tau_{K/\Q})$ in \eqref{E:40}, we find that there exist $ \aA$ and $\bB$ in $\sK$ such that \[e_\Ord(\theta_\brch^\circ\sG_C)=\aA\bftheta_{\brch}+\bB\bftheta_{\brch^c}+\sH\quad\]
for some $\sA$-adic form $\sH\in\bfS^\perp$. According to \cite[Theorem 8.1]{HidaTil93ASENS}, the coefficients $\aA$ and $\bB$ are essentially a product of two-variable Katz $p$-adic $L$-functions $\cL_p(s,t,\chi)$ (See \eqref{E:2Katz} for the definition).  By Hida's $p$-adic Rankin-Selberg method, we will prove in \propref{P:3RS} the following precise identity \beq\label{E:30} \bB(s)=\frac{2\cL_p(s,0,\bfone)\cL_p(s,0,\chi)}{L(0,\tau_{K/\Q})\cL_p(s,-s,\chi)\zeta_p(1-s)}\cdot\frac{\Dmd{d_K}^s}{(1-\e_{\frakp}^s(\Frob_{\pbar}))^2}\eeq
 for some good choices of $\brch$ and $C$. Following a similar calculation in \cite[\S 3]{Ventullo15CMH},  we will see in \thmref{T:43} that the $\sA$-adic form $\sH$ produces an explicit Hecke eigensystem $\lam_\sH:\bfT^\perp\to \sA^\dagger/(s^2)$ with the properties (a) and (b) and use \eqref{E:30} to show that the first derivative of $\lam_\sH(U_p)$ is given by the derivatives of the Katz $p$-adic $L$-functions.  Putting all ingredients together, we prove \thmref{T:main} in \subsecref{SS:43}. 
 
 Finally, in \secref{S:comparison} we compare the definition of $\sL$-invariants in \eqref{E:Linv} and Benois' $\sL$-invariant in the setting of imaginary quadratic fields. First, Perrin-Riou \cite{PR95Ast229} formulated a general conjecture for special values of $p$-adic $L$-functions at all integer points except for the exceptional zero case. Using an idea of Greenberg \cite{Gr94} in the ordinary case, Benois \cite{B3} gave a general definition of $\sL$-invariant using $(\varphi,\Gamma)$-modules and formulated a trivial zero conjecture including the non-critical case.
We confirm that our formula is compatible with his conjecture in \S 5.2.

\subsection*{Notation and convention}If $F$ is a local or global field of characteristic zero, let $\cO_F$ be the ring of integers of $F$. 
Let $G_F$ denote the absolute Galois group of $F$ and let $C_F:=F^\times$ if $F$ is local and $C_F$ be the idele class group $\A_F^\x/F^\x$ if $F$ is global. Let $\rec_F:C_F\to G_F^{ab}$ be the \emph{geometrically normalized} reciprocity law homomorphism. 

Let $F$ be a global field. If $\frakq$ is a prime ideal of $\cO_F$ (resp. $v$ is a place of $F$), 
let $F_\frakq$ (resp. $F_v$) be the completion of $F$ at $\frakq$ (resp. $v$). 
Then $\rec_{F_\frakq}:F_\frakq^\x\to G_{F_\frakq}^{ ab}$ sends a uniformizer $\uf_\frakq$ of $\cO_{F_\frakq}$ to the corresponding geometric Frobenius $\Frob_\frakq$. If $S$ is a finite set of prime ideals of $\cO_F$, let $F_S$ be the maximal algebraic extension of $F$ unramified outside $S$ and let $G_{F,S}=\Gal(F_S/F)$. For a fractional ideal $\fraka$ of a global field $F$, we let $\Frob_\fraka:=\prod_{\frakq}\Frob_\frakq^{n_\frakq}$ if $\fraka$ has the prime ideal factorization $\prod_\frakq \frakq^{n_\frakq}$. 

If $\chi:C_F\to\C^\x$ is an idele class character of $F^\x$ unramified outside $S$. If $v$ is a place of $F$, let $\chi_v:F_v^\x\to\C^\x$ be the local component of $\chi$ at $v$ and let $L(s,\chi_v)$ be the local $L$-factor of $\chi_v$ in \cite[(3.1)]{Tate79Corvallis}. Let $L(s,\chi)=\prod_v L(s,\chi_v)$ is the \emph{complete} $L$-function of $\chi$ and $\epsilon(s,\chi)$ be the epsilon factor  \cite[(3.5.1-2)]{Tate79Corvallis}. If $\chi=\bfone$ is the trivial character, then we put $\zeta_{F_v}(s)=L(s,\bfone_v)$ and $\zeta_F(s)=L(s,\bfone)$. In particular, if we denote by $\zeta(s)$ the usual Riemann zeta function, then $\zeta_\Q(2)=\zeta_\R(s)\zeta(s)=\pi^{-\frac{s}{2}}\Gamma(\frac{s}{2})\zeta(s)$ and hence $\zeta_\Q(2)=\pi/6$ under our definition of the complete $L$-function.
If $\chi$ is a character of $G_{F,S}$, we shall view $\chi$ as a Hecke character of $C_F$ via $\rec_F$ and still denote by $\chi$ if there is no fear for confusion. Therefore, \[\chi(\frakq):=\chi(\Frob_\frakq)=\chi_\frakq(\uf_\frakq)\text{ for }\frakq\not\in S.\]
In particular, a primitive ray class character $\chi$ modulo $\frakc$ shall be identified with an idele class character $\chi$ of $F$ of conductor $\frakc$. 

We write $\A=\A_\Q$ for simplicity. Denote by $\bfe=\prod\bfe_v:\A/\Q\to\C^\x$ the unique additive character with $\bfe_\infty(x)=\exp(2\pi\sqrt{-1}x)$. If $\chi:\A^\x/\Q^\x\to\C^\x$ is a finite order idele class character of $\Q$ of level $N$, then let  $\chi_{\rm Dir}$ be the Dirichlet character modulo $N$ obtained by the restriction of $\chi$ to $\prod_{\ell\mid N}\Z_\ell^\x$. With our convention, if $q\ndivides N$ is a prime, then 
 \beq\label{E:13}\chi_q(q)=\chi((q))=\chi_{\rm Dir}(q)^{-1}.\eeq 
 We fix an isomorphism $\iota_p:\C\iso\Cp$ once and for all. Let $\Om:\Gal(\Q(\zeta_p)/\Q)\to\C^\x$ be Galois character such that $\iota_p\circ\Om$ is the $p$-adic \Teich character. Identifying $\Om$ with an idele class character of $\Q$, we have \[\iota_p(\Om_{\rm Dir}(a))\con a\pmod{p};\quad L(s,\Om)=L(s,\Om_{\rm Dir}^{-1}).\]

 \section{Ordinary $\Lam$-adic CM forms}
\subsection{Ordinary $\Lam$-adic forms}
If $N$ is a positive integer, let $\cS_k(N,\chi)$ denote the space of elliptic cusp forms of level $\Gamma_1(N)$ and Nebentypus $\chi_{\rm Dir}^{-1}$. If $f\in \cS_k(N,\chi)$ is a Hecke eigenform, let $\varphi_f:=\varPhi(f)$ be the associated automorphic form. Let $\Q_\infty$ be the cyclotomic $\Zp$-extension of $\Q$ and $\Gamma_\Q=\Gal(\Q_\infty/\Q)$. Define the Iwasawa algebra $\Lam:=\cW\powerseries{\Gamma_\Q}$ and write $\sigma\mapsto [\sigma]$ for the inclusion of group-like elements $\Gamma_\Q\to \Lam^\x$. If $\nu:\Gamma_\Q\to\Cp^\x$ is a continuous character, we extend $\nu$ uniquely to a $\cW$-algebra homomorphism $\nu:\Lam\to\Cp^\x$ by the formula $\nu([\gamma])=\nu(\gamma)$. Let $\cyc:\Gamma_\Q\to 1+p\Zp$ be the cyclotomic character. For $s\in \Zp$, let $P_s$ be the kernel of $\cyc^s:\Lam\to\Zp$, \ie the ideal of $\Lam$ generated by $\stt{[\sg]-\cyc^s(\sg)\mid \sg\in\Gamma_\Q}$. For a positive prime-to-$p$ integer $N$ and a finite order idele class character $\chi$ modulo $pN$, let $\bfS^\Ord(N,\chi,\Lam)$ be the space of (ordinary) $\Lam$-adic cusp forms of tame level $N$ with Nebentypus $\chi_{\rm Dir}^{-1}$, consisting of $q$-expansions $\cF(q)=\sum_{n}\bfa(n,\cF)q^n\in \Lam\powerseries{q}$ such that for $k\geq 1$, the specialization $\cF\pmod{P_k}=\sum_{n}\cyc^k(\bfa(n,\cF))q^n$ is the $q$-expansion of some cusp form $\cF_k$ in $\cS_{k+1}^\Ord(pN,\chi\Om^{k})\ot_{\C,\iota_p}\Cp$ at the infinity cusp. 

If $R$ is a $\Lam$-algebra which is an integral domain and finite over $\Lam$, let $\bfS^\Ord(N,\chi,R):=\bfS^\Ord(N,\chi,\Lam)\ot_{\Lam} R$ be the space of $\Lam$-adic forms defined over $R$. A basic result in Hida theory asserts that $\bfS^\Ord(N,\chi,R)$ is a free $R$-module of finite rank equipped with the action of Hecke operators $\stt{T_\ell}_{\ell\ndivides pN},\stt{U_q}_{q\mid pN}$. We let \[\bfT(N,\chi,R)=R[\stt{T_\ell}_{\ell\ndivides pN},\stt{U_q}_{q\mid pN}]\subset \End_{R}\bfS^\Ord(N,R,\chi)\]be the big ordinary cuspidal Hecke algebra generated by these Hecke operators over $R$. By the freeness of $\bfS^\Ord(N,\chi,R)$, we have $\bfT(N,\chi,R)=\bfT(N,\chi,\Lam)\ot_{\Lam} R$. A prime ideal $Q$ in $\Spec R$ is called an arithmetic point if $Q$ is lying above $P_k$ for some $k\geq 2$. A $\Lam$-adic form $\cF$ in $\bfS^\Ord(N,\chi,R)$ is a \emph{newform} of tame level $N_\cF\mid N$ if for all but finite many arithmetic primes $Q$ of $\Spec R$, the specialization $\cF\pmod{Q}\in \cS_{k+1}^\Ord(pN_\cF,\chi\Om^{k})$ is the $q$-expansion of a $p$-stabilized normalized elliptic newform of tame level $N_\cF$. 
\subsection{Classical CM forms}Let $K$ be an imaginary quadratic field and let $d_K>0$ be the fundamental discriminant of $\cO_K$. Let $\tau_{K/\Q}:(\Z/d_K\Z)^\x\to\stt{\pm 1}$ be the quadratic character associated with $K/\Q$. If $\psi$ is an idele class character of $K$ of conductor $\frakc$ with $\psi_\infty(z)=z^{-k}$ for some non-negative integer $k$, we recall that the CM form associated with $\psi$ is the elliptic modular form $\theta_\psi^\circ$ of weight $k+1$ defined by the $q$-expansion
\[\theta_\psi^\circ=\sum_{(\fraka,\frakc)=1}\psi(\fraka)q^{\rmN\fraka},\]
where $\fraka$ runs over ideals of $\cO_K$ prime to $\frakc$ and $\rmN\fraka:=\rmN_{K/\Q}(\fraka)$ is the norm of $\fraka$. Write $\psi_+:=\psi|_{\A^\x_\Q}=\Abs_{\A_\Q}^k\om$ for some finite order idele class character $\om$ of $\Q$. Then $\theta_\psi^\circ$ is a newform of weight $k+1$, level $\rmN \frakc d_K$ and Nebentypus $\om_{\rm Dir}^{-1}\tau_{K/\Q}$. 
Let $\frakp$ be a prime of $\cO_K$ lying above $p$. 
The $\frakp$-stabilization $\theta_\psi^{(\frakp)}$ is defined by 
\[\theta_\psi^{(\frakp)}=\sum_{(\fraka,\frakp\frakc)=1}\psi(\fraka)q^{\rmN \fraka}.\]
\subsection{$\Lam$-adic CM forms}
Suppose that $p\cO_\cK=\frakp\ol{\frakp}$, where $\frakp$ is the prime induced by the fixed embedding $\Qbar\hookto\C\iso\Qbarp$. Let $K_{\frakp^\infty}$ be the $\Zp$-extension of $K$ in $K(\frakp^\infty)$ and $\Gamma_{K,\frakp}=\Gal(K_{\frakp^\infty}/K)$. Let $\frakc$ be an ideal of $\cO_K$ coprime to $p$. The transfer map $\sV:G_\Q^{ab}\to G_K^{ab}$ induces a map $\sV:\Gamma_\Q\to \Gal(K(p^\infty)/K)\to \Gamma_{K,\frakp}$, which in turns gives rise to an embedding \beq\label{E:22}\sV:\Lam=\cW\powerseries{\Gamma_\Q}\to\Lam_K:=\cW\powerseries{\Gamma_{K,\frakp}}\eeq such that $\sV(\rec_{\Qp}(z)|_{\Q_\infty})=\rec_{K_\frakp}(z)|_{K_{\frakp^\infty}}$ for $z\in\Qp^\x$. Let $\Psi^{\rm univ}:G_K\to \Lam_K^\x$ be the universal character defined by the inclusion of group-like elements $\Gamma_{K,\frakp}\to \Lam_K^\x$
\beq\label{E:23}\Psi^{\rm univ}(\sg)=[\sg^{-1}|_{K_{\frakp^\infty}}]\in\Lam_K.\eeq
For any primitive ray class character $\brch$ modulo $\frakc$, we define 
\beq\label{E:24}\bftheta_\brch(q)=\sum_{(\fraka,\frakp\frakc)=1}\brch(\fraka)\cdot\Psi^{\rm univ}(\Frob_\fraka)q^{\rmN \fraka}\in \Lam_K\powerseries{q}.\eeq
Let $\brch_+=\brch\circ\sV$, regarded as an idele class character of $\Q$. 
Then $\bftheta_\brch$ is a $\Lam$-adic newform of tame level $N:=d_K\rmN \frakc$ and Nebentypus $\brch_+\tau_{K/\Q}$. Let $\bfS:=\bfS^\Ord(N,\brch_+\tau_{K/\Q},\Lam_K)$ and $\bfT:=\bfT(N,\brch_+\tau_{K/\Q},\Lam_K)$.  Then $\bfS$ is a free $\Lam_K$-module with $\bfT$-action. Denote $\bfK = \operatorname{Frac} (\Lam_K)$.
Let $\bfS^\perp$ be the subspace of $\bfS\ot\bfK$ generated by the following set \[\Xi^\perp=\stt{\cF(q^M)\mid \cF\neq \bftheta_\brch\text{ or }\bftheta_{\brch^c}\text{ a newform in $\bfS$ of tame level $N_\cF$}\text{ and }MN_\cF\mid N}.\] 
By the theory of $\Lam$-adic newforms \cite[Proposition 1.5.2]{Wiles88Inv}, we have the decomposition of $\bfT$-modules
 \beq\label{E:decomp.1}\bfS\ot_{\Lam_K}\bfK=\bfK\cdot\bftheta_\brch\oplus\bfK\cdot\bftheta_{\brch^c}\oplus \bfS^\perp.\eeq

\section{The $p$-adic Rankin-Selberg convolutions}
\subsection{A classical Eisenstein series}We recall a general construction of Eisenstein series in the theory of automorphic forms. If $\om$ is a finite order idele class character of $\Q$ and $k$ is an integer, let $\cA_k(\om)$ denote the space of automorphic forms $\varphi:\GL_2(\Q)\bksl \GL_2(\A)\to\C$ such that 
\[\varphi(zg\kappa_\theta)=\om(z)\varphi(g)e^{2\pi\sqrt{-1}k\theta},\quad z\in \A^\x,\,\kappa_\theta=\pMX{\cos\theta}{\sin\theta}{-\sin\theta}{\cos\theta}\in\SO_2(\R).\]
Let $\cA^0_k(\om)\subset \cA_k(\om)$ be the subspace of cusp forms. For $a\in\Zp^\x$, put $\Dmd{a}:=a\cdot (\iota_p \circ\Om )(a)^{-1}$.
For each place $v$, let $\Om_v$ be the local component of $\Om$ at $v$.
Let $\cD$ be the pair 
\[\cD=(k,C),\quad C\in\Z_{>0}\text{ and }p\ndivides C.\] Let $\cS(\A^2)$ be the space of \BS functions on $\A^2$. Define 
$\Phi_{\cD}=\Phi_{\cD,\infty}\ot_{\ell}'\Phi_{\cD,\ell}\in \cS(\A^2)$ by 
\begin{itemize}
\item $\Phi_{\cD,\infty}(x,y)=2^{-k}(x+\sqrt{-1}y)^ke^{-\pi(x^2+y^2)}$, 
\item $\Phi_{\cD,\ell}(x,y)=\bbI_{C\Z_\ell}(x)\bbI_{\Z_\ell}(y)$,
\item $\Phi_{\cD,p}(x,y)=
\Om_p^{-k}(x)\bbI_{\Zp^\x}(x)\bbI_{\Zp}(y).$\end{itemize}
Recall that $f_{\cD,s}=\ot_v f_{\cD,s,v}$, where $f_{\cD,s,v}=f_{\Om_v^k,\bfone,\Phi_{\cD,v},s}:\GL_2(\Q_v)\to\C$ is the Godement section associated with $\Phi_{\cD,v}$ defined by 
\[f_{\cD,s,v}(g_v)=\Om_v^k(\det g)\abs{\det g_v}_v^{s+\onehalf}\int_{\Q_v^\x}\Phi_{\cD,v}((0,t_v)g_v)\Om^k(t_v)\abs{t_v}_v^{2s+1}\rmd^\x t_v\]
 (\cf\cite[(4.1)]{HsiehChen20}). Let $B(\Q)$ be the upper triangular matrices in $\GL_2(\Q)$. Then the Eisenstein series $E_\A(-,f_{\cD,s}):\GL_2(\A)\to\C$ is the series defined by 
\[E_\A(g,f_{\cD,s})=\sum_{\gamma\in B(\Q)\bksl \GL_2(\Q)}f_{\cD,s}(\gamma g)\in \cA_k(\bfone)\]
(\cf\cite[(7.8), page 351]{BumpGrey}). The series $E_\A(g,f_{\cD,s})$ is absolutely convergent for $\Re(s)>1/2$ and can be analytically continued to the whole complex plane except at $s=\pm\frac{1}{2}$. Suppose that $k\geq 2$. For $z=x+\sqrt{-1}y\in \frakH=\stt{z\in\C\mid \Im(z)>0}$, put \[E_k(C)(z)=y^{-\frac{k}{2}}E_\A(\pMX{y}{x}{0}{1},f_{\cD,s})|_{s=\frac{1-k}{2}}.\]
Then $E_k(C)(z)$ defines a classical Eisenstein series of weight $k$ and level $\Gamma_0(pC)$.
\begin{prop}\label{P:3FE}The Fourier expansion of $E_k(C)$ is given by
\[E_k(C)=\frac{\Dmd{C}^k}{2C}\zeta_p(1-k)+\sum_{n>0,C\mid n}\bfa(n,E_k(C))q^n,\]
where
\beq\label{E:35}\bfa(n,E_k(C))= \sum_{C\mid d\mid n,\,p\ndivides d}d^{k-1}\Om_{\rm Dir}(d)^{-k}.\eeq
\end{prop}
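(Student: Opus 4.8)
The plan is to unwind the adelic Eisenstein series $E_\A(g,f_{\cD,s})$ into its classical Fourier expansion by a standard computation, keeping careful track of the local sections $f_{\cD,s,v}$ that were chosen. First I would compute the Whittaker/Fourier coefficients of $E_\A(g,f_{\cD,s})$ by the usual Bruhat decomposition of $B(\Q)\bksl\GL_2(\Q)$: the constant term splits into the ``$1$'' contribution and the contribution of the big cell, which after integration produces an intertwining integral $M(s)f_{\cD,s}$, while the $n$-th coefficient for $n\neq 0$ is a product of local Whittaker integrals $W_{v}(n;f_{\cD,s,v})$. This is the content of \cite[(7.8)]{BumpGrey} and the surrounding formulas; I would cite that rather than reprove it.

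Next I would evaluate each local factor at the point $s=\tfrac{1-k}{2}$. At the archimedean place, the choice $\Phi_{\cD,\infty}(x,y)=2^{-k}(x+\sqrt{-1}y)^k e^{-\pi(x^2+y^2)}$ is precisely the one making $E_\A(\pMX{y}{x}{0}{1},f_{\cD,s})$, after the factor $y^{-k/2}$, holomorphic of weight $k$, so the archimedean Whittaker integral contributes the expected $q^n=e^{2\pi\sqrt{-1}nz}$ together with a factor $n^{k-1}$ (for $n>0$) and the archimedean part of the completed $L$-value in the constant term. At the finite places $\ell\nmid pC$, the spherical section gives the local $L$-factor $\zeta_\ell(2s+1)^{-1}$ times $\sum_{d} \ell^{-(k-1)\ord_\ell d}\cdots$, i.e. it assembles the divisor sum $\sum_{d\mid n}d^{k-1}\Om_{\rm Dir}(d)^{-k}$. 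At $\ell\mid C$ the condition $\bbI_{C\Z_\ell}(x)$ forces $C\mid d$ in the divisor sum, which is how the ``$C\mid d\mid n$'' constraint in \eqref{E:35} appears, and it also produces the level $\Gamma_0(pC)$. At $p$, the section $\Phi_{\cD,p}(x,y)=\Om_p^{-k}(x)\bbI_{\Zp^\x}(x)\bbI_{\Zp}(y)$ is the $p$-stabilizing choice: the condition $\bbI_{\Zp^\x}(x)$ kills the terms with $p\mid d$, giving the constraint $p\nmid d$ in \eqref{E:35}, and it replaces the Euler factor at $p$ by the ordinary projection, which is why $\zeta_p(1-k)$ (the $p$-deprived, i.e. $p$-adic, zeta value) rather than $\zeta(1-k)$ appears in the constant term. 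Collecting the nonzero coefficients and matching normalizations — in particular identifying $\Om_v$ with the Teichm\"uller character via $\iota_p$ so that $\Om_{\rm Dir}(d)$ enters as in the statement, and using the product formula for the completed Dirichlet/Riemann $L$-value to see that the global constant term collapses to $\frac{\Dmd{C}^k}{2C}\zeta_p(1-k)$ — gives exactly the asserted expansion.

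The main obstacle is bookkeeping rather than conceptual: getting all the normalizing constants right simultaneously — the $2^{-k}$ in $\Phi_{\cD,\infty}$, the $y^{-k/2}$ twist, the factor $\abs{\det g}^{s+1/2}$ and $\abs{t}^{2s+1}$ in the Godement section, the geometric normalization of $\rec$, and our convention $\zeta_\Q(2)=\pi/6$ for the completed $L$-function — so that the constant term comes out as $\frac{\Dmd{C}^k}{2C}\zeta_p(1-k)$ with no stray power of $2$, $\pi$, or $C$, and so that the bracket $\Dmd{C}^k=C^k(\iota_p\Om)(C)^{-k}$ rather than a bare $C^k$ is what survives. I would handle this by checking the level-one case $C=1$ against the classical normalized Eisenstein series (where the constant term is $\tfrac12\zeta(1-k)$ classically, hence $\tfrac12\zeta_p(1-k)$ after removing the Euler factor at $p$ and this also fixes the relation to $\cite[\text{Prop }3FE]{HsiehChen20}$-type formulas) and then track the effect of replacing $\bbI_{\Z_\ell}(x)$ by $\bbI_{C\Z_\ell}(x)$, which rescales $x\mapsto Cx$ in the relevant integral and produces precisely the factor $\Dmd{C}^k$, the shift $q^n\mapsto q^{Cn}$, and the divisibility $C\mid d$.
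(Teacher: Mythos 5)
Your proposal follows essentially the same route as the paper: unfold $E_\A(g,f_{\cD,s})$ via the Bruhat decomposition, compute the nonzero Fourier coefficients as products of local Whittaker integrals (with the sections at $\ell\mid C$ and at $p$ producing the constraints $C\mid d$ and $p\nmid d$), and obtain the constant term from the intertwining integral, which the paper evaluates directly as $\tfrac{C^{2s}\Om((C))^k}{2}L(2s,\Om^k)/L(2s,\Om_p^k)$ at $s=\tfrac{1-k}{2}$. The only detail worth making explicit is that the identity-cell contribution $f_{\cD,\frac{1-k}{2}}(1)$ to the constant term vanishes because $\Phi_{\cD,p}(0,t)=0$ (as $\bbI_{\Zp^\x}(0)=0$), so the constant term is the intertwining term alone.
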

\begin{proof}For each positive integer $n$, the Fourier coefficient $\bfa(n,E_k(C))$ is the product of local Whittaker functions 
\[\bfa(n,E_k(C))=n^\frac{k}{2}\prod_{\ell}W(\pDII{n}{1},f_{\cD,s,\ell})|_{s=\frac{1-k}{2}},\] 
where $W(-,f_{\cD,s,\ell}):\GL_2(\Q_\ell)\to\C$ is the local Whittaker function defined by 
\[W(g,f_{\cD,s,\ell})=\lim_{n\to\infty}\int_{\ell^{-n}\Z_\ell}f_{\cD,s,\ell}(\pMX{0}{-1}{1}{0}\pMX{1}{x}{0}{1}g)\bfe_\ell(-x)\rmd x_\ell,\]
and the Haar measure $\rmd x_\ell$ is normalized so that $\vol(\Z_\ell,\rmd x_\ell)=1$ (\cf\cite[Corollary 4.7]{HsiehChen20} and \cite[(7.14)]{BumpGrey}). Hence we get \eqref{E:35} from the explicit formulae of these local Whittaker functions in \cite[Lemma 4.6]{HsiehChen20}. 

On the other hand, the constant term $\bfa(0,E_k(C))$ of $E_k(C)$ at the infinity cusp is given by 
\[\bfa(0,E_k(C))=f_{\cD,\frac{1-k}{2}}(1)+(Mf_{\cD,s})(1)\Big\vert_{s=\frac{1-k}{2}},\]
where $Mf_{\cD,s}(g)$ is obtained by the analytic continuation of the intertwining integral
\[Mf_{\cD,s}(g)=\int_{\A}f_{\cD,s}(\pMX{0}{-1}{1}{0}\pMX{1}{x}{0}{1})\rmd x,\,g\in \GL_2(\A)\]
(\cf\cite[(7.15)]{BumpGrey}). A direct computation shows that for $\Re(s)\gg 0$, \begin{align*}Mf_{\cD,s}(1)=&\prod_v\int_{\Q_v}f_{\cD,s,v}(\pMX{0}{-1}{1}{x})\rmd x_v\\
 =&\frac{C^{2s}\prod_{\ell\mid C}\Om^k_\ell(C)}{2}\cdot \frac{L(2s,\Om^k)}{L(2s,\Om_p^k)}\\
 =&\frac{C^{2s}\Om((C))^{k}}{2}\cdot L(2s,\Om^k)\cdot 
 \begin{cases} (1-p^{-2s}\Om_p^{k}(p))&\text{ if $\Om_p^k$ is unramified},\\
 1& \text{ if $\Om_p^k$ is ramified},\end{cases}
\end{align*}
(\cf \cite[Proposition 2.6.3 and (7.27)]{BumpGrey}). Since $f_{\cD,s,p}(1)=0$, we see that 
\begin{align*}\bfa(0,E_k(C))&=Mf_{\cD,s}(1)|_{s=\frac{1-k}{2}}\\=&\frac{\Dmd{C}^k}{2C}(1-p^{k-1}\Om_{\rm Dir}^{-k}(p))L(1-k,\Om_{\rm Dir}^{-k})=\frac{\Dmd{C}^k}{2C}\zeta_p(1-k).\end{align*}
This finishes the computation of the Fourier expansion of $E_k(C)$. \end{proof}
\begin{Remark}Let $E_k(z)$ be the standard classical Eisenstein series with the $q$-expansion
\[E_k=\frac{\zeta(1-k)}{2}+\sum_{n>0}\sigma_{k-1}(n)q^n.\]
Let $E^{(p)}_k(z):=E_k(z)-p^{k-1}E_k(pz)$ be the $p$-stabilization of $E_k$. From the inspection of Fourier expansions, we have 
\[E_k(C)(z)=C^{-1}\Dmd{C}^k\cdot E^{(p)}_k(Cz).\]
The adelic construction of $E_k(C)$ will be used in the later computation of the adelic Rankin-Selberg convolution. 
\end{Remark}
\subsection{A $\Lam$-adic Eisenstein series}
Let $P$ be the augmentation ideal of $\Lam$. Let $\cL^{\rm KL}_p(\bfone)\in
P^{-1}\Lam$ be the Kubota-Leopoldt $p$-adic $L$-function associated with trivial character, \ie $\cyc^s(\cL^{\rm KL}_p(\bfone))=\zeta_p(1-s)$. Define the $q$-expansion
\begin{align*}&\cE_C:=\frac{[\Frob_C]^{-1}}{2C}\cdot \cL^{\rm KL}_p(\bfone)+\sum_{n>0,\,C\mid n}\bfa(n,\cE_C) q^n,\\&\bfa(n,\cE_C)=\sum_{C\mid d\mid n,\,p\ndivides d}d^{-1}[\Frob_d]^{-1}\in \Lam.\end{align*}
\begin{prop}The $q$-expansion $\cE_C$ defines a $\Lam$-adic form of Eisenstein series. More precisely, for $k\geq 2$, we have
\[\cyc^k(\cE_C)=E_k(C)(q).\]
\end{prop}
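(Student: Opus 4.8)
The plan is to compute the arithmetic specializations $\cyc^k(\cE_C)$ for every $k\geq 2$ coefficient by coefficient and to match the result with the Fourier expansion of $E_k(C)$ obtained in \propref{P:3FE}. We use three facts: $\cyc^k:\Lam\to\cW$ is a continuous $\cW$-algebra homomorphism, so it acts on $\cE_C\in P^{-1}\Lam\powerseries{q}$ coefficientwise; $\cyc^k([\sg])=\cyc^k(\sg)$ for $\sg\in\Gamma_\Q$ by the definition of the group-like elements $[\,\cdot\,]$; and $\cyc^k(\cL^{\rm KL}_p(\bfone))=\zeta_p(1-k)$ by the defining interpolation property of the Kubota--Leopoldt $p$-adic $L$-function. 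Next I pin down the effect of $\cyc^k$ on the Frobenius elements appearing in $\cE_C$: for a prime-to-$p$ positive integer $d=\prod_\ell \ell^{n_\ell}$ one has $\Frob_d=\prod_\ell\Frob_\ell^{n_\ell}$, and with the geometric normalization of $\rec_\Q$ fixed in the Notation section (so $\rec_{\Q_\ell}(\ell)=\Frob_\ell$), the cyclotomic character satisfies $\cyc(\Frob_\ell)=\Dmd{\ell}^{-1}$ for $\ell\neq p$. Hence
\[\cyc^k([\Frob_d]^{-1})=\cyc(\Frob_d)^{-k}=\Dmd{d}^{k}=d^{k}\,\iota_p(\Om_{\rm Dir}(d))^{-k},\]
where the last equality uses $\Dmd{a}=a\cdot(\iota_p\circ\Om)(a)^{-1}$ together with the identification of the $p$-adic Teichm\"uller character with $\Om_{\rm Dir}$ via $\iota_p(\Om_{\rm Dir}(a))\equiv a\pmod p$.

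Applying $\cyc^k$ to each coefficient of $\cE_C$: the constant term becomes $\dfrac{\cyc^k([\Frob_C]^{-1})}{2C}\,\zeta_p(1-k)=\dfrac{\Dmd{C}^k}{2C}\,\zeta_p(1-k)$, which is precisely the constant term of $E_k(C)$ in \propref{P:3FE}; and for $n>0$ with $C\mid n$,
\[\cyc^k(\bfa(n,\cE_C))=\sum_{C\mid d\mid n,\ p\nmid d}d^{-1}\,\cyc^k([\Frob_d]^{-1})=\sum_{C\mid d\mid n,\ p\nmid d}d^{k-1}\,\iota_p(\Om_{\rm Dir}(d))^{-k}=\iota_p(\bfa(n,E_k(C)))\]
by \eqref{E:35}. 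Thus $\cyc^k(\cE_C)=E_k(C)(q)$ as $q$-expansions for all $k\geq 2$, after the tacit identification of $\C$ with $\Cp$ via $\iota_p$.

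Finally, to see that $\cE_C$ genuinely defines a $\Lam$-adic Eisenstein series, note that $\bfa(n,\cE_C)\in\Lam$ for $n>0$ while the constant term lies in $P^{-1}\Lam$, and that by the Remark following \propref{P:3FE} each $E_k(C)$ ($k\geq 2$) equals the ordinary $p$-stabilization $C^{-1}\Dmd{C}^k E_k^{(p)}(Cz)$, hence is a $p$-ordinary classical Eisenstein form of level $\Gamma_0(pC)$; since these classical specializations occur at every arithmetic weight $k\geq 2$, $\cE_C$ meets the defining condition for a $\Lam$-adic form of Eisenstein type. The step needing the most care is the normalization bookkeeping in the first paragraph: the sign of the exponent in $\cyc(\Frob_\ell)=\Dmd{\ell}^{-1}$ is forced by the geometric normalization of the reciprocity law, and one must transport consistently via $\iota_p$ between the complex Fourier expansion of \propref{P:3FE} and the $\Cp$-valued specialization, matching the $p$-adic Teichm\"uller character against $\Om_{\rm Dir}$; everything else is a direct term-by-term comparison.
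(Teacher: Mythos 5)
Your proposal is correct and follows essentially the same route as the paper: the paper's proof consists precisely of the observation $\cyc(\Frob_a)=\Dmd{a}^{-1}=a^{-1}\Om_{\rm Dir}(a)$ for $a$ prime to $p$, followed by a term-by-term comparison with the Fourier expansion of $E_k(C)$ in Proposition \ref{P:3FE}. You simply carry out the coefficientwise matching (including the constant term via $\cyc^k(\cL^{\rm KL}_p(\bfone))=\zeta_p(1-k)$) in more explicit detail than the paper does.
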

\begin{proof}Note that with our convention \eqref{E:13}, for any positive integer $a$ prime to $p$,  $\Frob_a$ is an element in $G_\Q$ corresponding to the ideal $(a)=a\Z$ and \[\cyc(\Frob_a)=\Dmd{a}^{-1}=a^{-1}\Om_{\rm Dir}(a).\] 
The assertion thus follows from \propref{P:3FE} immediately.
\end{proof}

\subsection{Two-variable and improved Katz $p$-adic $L$-functions}
Let $\frakf$ be an integral ideal of $K$. If $\chi$ is an idele class character of $K$ with the conductor $\frakf$.  The (finite) Hecke $L$-function for $\chi$ is defined by the Dirichlet series
\[L_{\rm fin}(s,\chi)=\sum_{(\fraka,\frakf)=1}\chi(\fraka)\rmN \fraka^{-s}.\]
If the infinity type of $\chi$ is $(a,b)\in\Z^2$, \ie $\chi_\infty(z)=z^a\ol{z}^b$, then the Hecke $L$-function associated with $\chi$ is given by 
\beq\label{E:Lfcn}L(s,\chi):=2(2\pi)^{-(s+\max\stt{a,b})}\Gamma(s+\max\stt{a,b})L_{\rm fin}(s,\chi)\eeq

Suppose that $(\frakp\pbar,\frakf)=1$. We consider the $p$-adic $L$-functions $L_{p,\frakf\pbar^\infty}$ and $L_{p,\frakf}$ of $K$ defined in \cite[(49), page 86]{Shalit87book}. Let $\chi$ be a primitive ray class character modulo $\frakf$. Let $\cL_p(\chi)$ be the unique element in the Iwasawa algebra  $\cW\powerseries{\Gal(K(p^\infty)/K}$ such that for every $p$-adic continuous character $\ep$ on $\Gal(K(p^\infty)/K)$, we have \[\ep(\cL_p(\chi))=L_{p,\frakf\pbar^\infty}(\chi\ep)\ep(\sigma_\delta),\] where $\sigma_\delta\in \Gal(K(\frakf p^\infty)/K(\frakf \pbar^\infty))$ is the element defined in \cite[(7), page 92]{Shalit87book}. 
 We call $\cL_p(\chi)$ the two-variable Katz $p$-adic $L$-function associated with $\chi$. Let $\e_\frakp: \Gamma_{K,\frakp}=\Gal(K_{\frakp^\infty}/K)\to \cW^\times$ be a $p$-adic character such that
 \[\e_\frakp(\rec_K(z))=\Dmd{z_{\frakp}},\,z\in \wh\cO_K^\x.\]
 By definition, $\e_\frakp\circ\sV=\cyc$. Let $\e_{\pbar}(\sg):=\e_{\frakp}(c\sg c)$. It is convenient to introduce the two-variable Katz $p$-adic $L$-function $\cL_p(s,t,\chi):\Zp^2\to \cW$ defined by 
 \beq\label{E:2Katz}\cL_p(s,t,\chi):=(\e_\frakp^{s}\e_{\pbar}^t)\left(\cL_p(\chi)\right)\text{ for } (s,t)\in\Zp^2.\eeq
Let $\psi$ be the idele class character of $K^\x$ such that $\wh\psi=\e_{\frakp}$, \ie $\psi:\A^\x_K/K^\x\to \C^\x$ is an idele class character of $K$ unramified outside $\frakp$ and $\psi_\infty(z)=z$ and $\psi(\frakq)=\psi_\frakq(\uf_\frakq)=\e_\frakp(\Fr_\frakq)$ for any prime $\frakq\neq \frakp$. 
\begin{prop}\label{P:3interpolation}There exists periods $(\Omega_\infty,\Omega_p)\in \C^\x\times \cW^\x$ such that for all $(k,j)\in\Z^2$ such that $k\geq 1$ and $j\geq 0$ or $k\leq 1$ and $k+j>0$, we have
 \[\frac{\cL_p(k+j,-j,\chi)}{\Omega_p^{k+2j}}=\frac{1}{2(\sqrt{-1})^{k+j}}(1-\chi\psi^{k+j(1-c)}(\pbar))(1-\chi\psi^{k+j(1-c)}(\frakp^{-1})p^{-1})\frac{L(0,\chi\psi^{k+j(1-c)})}{\Omega_\infty^{k+2j}}.\]
 \end{prop}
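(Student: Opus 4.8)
In substance the proposition merely reformulates the defining interpolation property \eqref{E:interp1} of the Katz $p$-adic $L$-function $\cL_p(\chi)$: the element constructed just above from \cite{Shalit87book} and \cite{HidaTil93ASENS} is, by uniqueness, the one characterized by \eqref{E:interp1}. The plan is therefore to take for $(\Omega_\infty,\Omega_p)$ exactly the period pair furnished by \eqref{E:interp1} and to specialize that formula along the one-parameter family of algebraic Hecke characters $\psi^{k+j(1-c)}$; no new periods are introduced.

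First I would fix $(k,j)\in\Z^2$ in the admissible range of \eqref{E:interp1}, \ie $k\ge 1,\ j\ge 0$ or $k\le 1,\ k+j>0$, and set $\phi_{k,j}:=\psi^{k+j(1-c)}=\psi^{k+j}(\psi^c)^{-j}$. Since $\psi_\infty(z)=z$ we have $(\psi^c)_\infty(z)=\bar z$, hence $(\phi_{k,j})_\infty(z)=z^{k+j}\bar{z}^{-j}$; and since $\psi$ is unramified outside $\frakp$ and $\psi^c$ outside $\pbar$, the character $\phi_{k,j}$ is ramified only at the primes above $p$. Thus the $p$-adic avatar $\wh{\phi_{k,j}}$ is crystalline at both primes above $p$ of Hodge--Tate weight $(-k-j,j)$ --- exactly the shape of the characters $\wh\phi$ admitted in \eqref{E:interp1}, subject to the same constraints on $(k,j)$.

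Second, I would identify the avatar explicitly. From $\wh\psi=\e_\frakp$ and the relation $\e_\pbar(\sigma)=\e_\frakp(c\sigma c)$ one gets $\wh{\psi^c}=\e_\pbar$ (both sides agree on Frobenii $\Frob_\frakq$, $\frakq\neq\frakp,\pbar$, as $\psi^c(\frakq)=\psi(\ol{\frakq})$ and $\e_\pbar(\Frob_\frakq)=\e_\frakp(\Frob_{\ol{\frakq}})$), and hence $\wh{\phi_{k,j}}=\e_\frakp^{k+j}\e_\pbar^{-j}$. Substituting $\wh\phi=\wh{\phi_{k,j}}$ into \eqref{E:interp1} and using the definition \eqref{E:2Katz}, the left-hand side becomes
\[
\frac{(\e_\frakp^{k+j}\e_\pbar^{-j})\bigl(\cL_p(\chi)\bigr)}{\Omega_p^{k+2j}}=\frac{\cL_p(k+j,-j,\chi)}{\Omega_p^{k+2j}},
\]
while on the right-hand side $\chi\phi(\Frob_\pbar)=\chi\psi^{k+j(1-c)}(\pbar)$ and $\chi\phi(\Frob_\frakp^{-1})=\chi\psi^{k+j(1-c)}(\frakp^{-1})$ by the convention $\chi(\frakq)=\chi(\Frob_\frakq)$; this is precisely the asserted identity.

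The only genuine work --- and the step most exposed to sign and normalization slips --- is to check that the Hodge--Tate normalization tacit in \eqref{E:interp1} matches the $\Dmd{\cdot}$-normalized characters $\e_\frakp,\e_\pbar$ (in particular the Teichm\"uller twist concealed in $\Dmd{z}=z\,\Om_p(z)^{-1}$), so that both the period pair and the two Euler factors at $\frakp,\pbar$ emerge exactly as written and no spurious finite factor at $p$ is incurred in passing between $\psi^{k+j(1-c)}$ and its avatar. Granting the conventions of \cite{Katz78Inv}, \cite{Shalit87book} and \cite{HidaTil93ASENS} already underlying \eqref{E:interp1} and \eqref{E:2Katz}, this reduces to a direct comparison of local data at the primes above $p$, which I would carry out to finish.
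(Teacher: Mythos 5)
Your identification of the avatar is correct and coincides with the paper's own computation: the character $\varepsilon=\e_\frakp^{k+j}\e_\pbar^{-j}$ is exactly the $p$-adic avatar of $\psi^{k+j(1-c)}$, and specializing a two-variable interpolation formula at this character is indeed the whole strategy. The difficulty is where you get the interpolation formula from. Within the paper's logic, \eqref{E:interp1} in the introduction is only an \emph{announcement} of the property that \propref{P:3interpolation} is meant to establish for the concrete element defined in \S 3.3, namely $\ep(\cL_p(\chi))=L_{p,\frakf\pbar^\infty}(\chi\ep)\ep(\sigma_\delta)$ built from de Shalit's $p$-adic $L$-function and the auxiliary element $\sigma_\delta$. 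Your appeal to ``uniqueness'' is therefore circular: uniqueness tells you that \emph{if} this de Shalit element satisfies \eqref{E:interp1} (with some choice of periods), then it agrees with the element characterized there; but whether it does satisfy \eqref{E:interp1} --- with which periods, and with the \emph{complete} $L$-value $L(0,\chi\phi)$ of \eqref{E:Lfcn} rather than the finite Dirichlet series that de Shalit actually interpolates --- is precisely what the proposition asserts.

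Consequently the ``normalization check'' you defer to your final paragraph is not a residual verification but the entire content of the proof. The paper carries it out by taking the period pair $(\Omega,\Omega_p)$ from de Shalit's Theorem 4.14, setting $\Omega_\infty:=(2\pi)^{-1}\Omega\sqrt{d_K}$ to absorb the archimedean $\Gamma$-factors and the discriminant separating the complete $L$-function from de Shalit's normalization, and then matching his interpolation formula (50) on p.~86 together with Lemma (i) on p.~92 (which accounts for the $\sigma_\delta$ twist and produces the Euler factors at $\frakp$ and $\pbar$ in the stated form) against the asserted identity. To complete your argument you would need to perform exactly this comparison; routing through \eqref{E:interp1} does not let you avoid it.
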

 \begin{proof}Let $(\Omega,\Omega_p)\in\C^\x\times \cW^\x$ be the periods introduced in \cite[Theorem 4.14, page 80]{Shalit87book} and put $\Omega_\infty:=(2\pi)^{-1}\Omega\sqrt{d_K}$. Write $\varepsilon=\e_\frakp^{k+j}\e_\pbar^{-j}$. One deduces the desired interpolation formula of $\cL_p(k+j,-j,\chi)=L_{p,\frakf\pbar^\infty}(\chi\varepsilon)\varepsilon(\sigma_\delta)$ from \cite[(50), page 86 and Lemma (i), page 92]{Shalit87book}.
\end{proof}
 Likewise we define $\cL_\frakp^*(\chi)$ to be the unique element in $\cW\powerseries{\Gal(K(\frakp^\infty)/K)}$ such that 
 \[\ep(\cL_\frakp^*(\chi))=L_{p,\frakf}(\chi\ep)\ep(\sigma_\delta)\]
 for any $p$-adic character $\ep$ on $\Gal(K(\frakp^\infty)/K)$. Put \[\cL_\frakp^*(s,\chi):=\e_\frakp^s(\cL_\frakp^*(\chi)).\]
 Then $\cL_\frakp^*(\chi)$ is called the (one-variable) improved $p$-adic $L$-function associated with $\chi$ in the sense that
 \beq\label{E:improved}\cL_p(s,0,\chi)=(1-\chi(\pbar)\e_{\frakp}^s(\Frob_{\pbar}))\cL_\frakp^*(s,\chi).\eeq
If $\chi\neq\bfone$, then by the $p$-adic Kronecker limit formula \cite[Theorem 5.2, page 88]{Shalit87book}, we have \beq\label{E:improved2}\cL^*_\frakp(0,\chi)=\frac{-1}{12w_{\frakf}}\left(1-\frac{\chi(\frakp^{-1})}{p}\right)\log_p \frake_\chi,\eeq
where $\frake_\chi$ is the Robert's unit in \eqref{E:Robert}. 
It follows that $\cL^*_\frakp(0,\chi)\neq 0$ by the Brumer-Baker Theorem.

 \begin{Remark}\label{R:34}Recall that the cyclotomic $p$-adic $L$-function $L_p(s,\chi):=\cyc^s(\cL_p(\chi))$. Let $h$ be the class number of $K$. Since $\e_{\frakp}^h\e_{\pbar}^h=\cyc^h$, we have
 \[\cL_p(ht,ht,\chi)=L_p(ht,\chi)\text{ for }t\in\Zp.\]
\end{Remark} 
\subsection{Rankin-Selberg convolution with CM forms}
Let $\chi$ be a ring class character unramified outside $pd_K$. There exists a ray class character $\brch$ such that \[\chi=\brch^{1-c}\] (\cf\cite[Lemma 5.31]{HidaDeepBlue}). Replacing $\brch$ by $\brch\cdot \xi\circ\rmN_{K/\Q}$ for a suitable Dirichlet character $\xi$, we may further assume $\brch$ satisfies the following minimal condition
\beqcd{min}\text{the conductor of $\brch$ is minimal among Dirichlet twists.}\eeqcd  
Since $\chi=\brch^{1-c}$ is unramified outside $pd_K$, this in particular implies that the conductor $\frakc$ of $\brch$ has a decomposition \[\frakc=\frakc_{\rm i}\frakc_{\rm s},\quad (\frakc,pd_K)=1;\,(\ol{\frakc_{\rm s}},\frakc_{\rm s})=1,\] where $\frakc_{\rm i}$ is only divisible by primes inert in $K$ and $\frakc_{\rm s}$ is only divisible by primes split in $K$. The level of the associated CM form $\theta_\brch^\circ$ is $N=d_KC_{\rm i}^2C_{\rm s}$, where $C_{\rm i}$ and $C_{\rm s}$ are positive integers satisfying $(C_{\rm i})=\frakc_{\rm i}\cap \Z$ and $(C_{\rm s})=\frakc_{\rm s}\cap \Z$. Put 
 \[C=d_K C_{\rm i}C_{\rm s}.\]
With the transfer map $\sV:\Lam\to \Lam_K$ in \eqref{E:22}, we define
 \beq\label{E:31}\cG_C:=\sV\left(\frac{2C}{\mathcal{L}^{\rm KL}_p(\bfone)}\cdot \cE_C\right)\in\Lam_P\powerseries{q},\eeq
where $\Lam_P$ is the localization of $\Lam_K$ at $P$.
By construction and the fact that $\zeta_p(s)$ has a simple pole at $s=1$, we find that \beq\label{E:310}\cG_C\con [\Frob_C^{-1}]\con 1\pmod{P}.\eeq Let $e_\Ord$ be Hida's ordinary projector on the space of $\Lam$-adic forms. The spectral decomposition of  \[e_\Ord(\theta_\brch^\circ\cG_C)\in \bfS=\bfS^\Ord(N,\brch_+\tau_{K/\Q},\Lam_K)\] according to \eqref{E:decomp.1} allows us to make the following definition.
\begin{defn}\label{D:H.3}Let $\aA$ and $\bB$ be the unique elements in $\bfK$ such that
 \beq\label{E:32}\sH:=e_\Ord(\theta_\brch^\circ \cG_C)-\aA\cdot \bftheta_\brch-\bB\cdot\bftheta_{\brch^c}\in \bfS^\perp.\eeq\end{defn}
 Let $\bfc$ be the positive integer such that $\bfc\cO_K$ is the conductor of $\chi$.
 \begin{prop}\label{P:3RS}With the ray class character $\brch$ and the integer $C$ as above, we have 
 \[\e_\frakp^s(\bB)=\frac{2\cL_p(s,0,\bfone)\cL_p(s,0,\chi)}{L(0,\tau_{K/\Q})\cL_p(s,-s,\chi)\zeta_p(1-s)}\cdot\frac{\Dmd{d_K\bfc}^s}{(1-\e_{\frakp}^s(\Frob_{\pbar}))^2}  . \]
  \end{prop}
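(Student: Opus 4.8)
The plan is to use Hida's $p$-adic Rankin--Selberg method, refining \cite[Theorem~8.1]{HidaTil93ASENS}: first express $\bB$ as the value at $\bftheta_{\brch^c}$ of the eigenprojection attached to the decomposition \eqref{E:decomp.1}, and then evaluate that projection by a $\Lam_K$-adic Petersson pairing. Concretely, let $\ell_{\brch^c}\colon\bfS\ot_{\Lam_K}\bfK\to\bfK$ be the $\bfK$-linear functional killing $\bfK\bftheta_\brch\oplus\bfS^\perp$ and sending $\bftheta_{\brch^c}$ to $1$; by \defref{D:H.3} one has $\bB=\ell_{\brch^c}\bigl(e_\Ord(\theta_\brch^\circ\cG_C)\bigr)$. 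Representing $\ell_{\brch^c}$ by the normalized Petersson pairing against the dual form $\bftheta_{\brch^c}^{\vee}$ yields
\[
\bB=\frac{\bigl\langle\,\bftheta_{\brch^c}^{\vee},\ e_\Ord(\theta_\brch^\circ\cG_C)\,\bigr\rangle}{\bigl\langle\,\bftheta_{\brch^c}^{\vee},\ \bftheta_{\brch^c}\,\bigr\rangle}\in\bfK,
\]
in which numerator and denominator are the $\Lam_K$-adic Petersson products interpolating the classical ones. Since both sides of the Proposition lie in $\bfK$, it suffices to compare their images under $\e_\frakp^s$ at the arithmetic points $s=k$ for all but finitely many positive integers $k$, where every term becomes classical; this is where the adelic Eisenstein series $E_\A(-,f_{\cD,s})$ and \propref{P:3FE} come into play.

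At such a $k$ the numerator becomes, up to elementary constants, the classical Petersson inner product of the $\frakp$-stabilized CM newform $\theta_{\brch^c}^{(\frakp)}$ against $\theta_\brch^{(\frakp)}\cdot E_k(C)$. I would write this adelically, substitute the Godement section $f_{\cD,s}$ recalled above, and unfold $E_\A(-,f_{\cD,s})$ against $\theta_\brch$; the resulting product of local zeta integrals evaluates, via the local Whittaker formulae of \cite[\S4]{HsiehChen20}, to the complete Rankin--Selberg $L$-function $L\bigl(s,\pi_\brch\times\pi_{\brch^c}^\vee\bigr)$ (where $\pi_\brch$ is the automorphic representation generated by $\theta_\brch^\circ$), up to an explicit archimedean factor, the Euler factors at $p$ contributed by the two $\frakp$-stabilizations, and finitely many local factors at the primes dividing $d_KC_{\rm i}C_{\rm s}$. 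Because $\pi_\brch$ and $\pi_{\brch^c}$ are dihedral, Artin formalism factors this $L$-function over $K$ as the product of the Hecke $L$-function of $\chi=\brch^{1-c}$ and $\zeta_K(s)=\zeta(s)L(s,\tau_{K/\Q})$. Matching this factorization with the interpolation property of \propref{P:3interpolation} for $\chi$, its analogue for the trivial character, and the interpolation of the Kubota--Leopoldt $p$-adic $L$-function, one identifies the $p$-adic avatar of the numerator with $\cL_p(s,0,\bfone)\cL_p(s,0,\chi)$, up to the factor $2$, the edge value $L(0,\tau_{K/\Q})$, the monomial $\Dmd{d_K\bfc}^s$ arising from the constant term of $\cG_C$ and the tame local integrals, and a factor $\zeta_p(1-s)$ coming from the normalizations of the degenerate Eisenstein series $\cG_C$ and of the Rankin--Selberg integral.

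For the denominator, the $\Lam_K$-adic Petersson norm $\bigl\langle\bftheta_{\brch^c}^{\vee},\bftheta_{\brch^c}\bigr\rangle$ is, up to elementary factors, the congruence number measuring congruences between $\bftheta_{\brch^c}$ and the complementary part of $\bfS$. For a CM family this was computed by Hida and Tilouine through the factorization of the adjoint $L$-function of a CM form, and equals, up to a $p$-adic unit, the anticyclotomic Katz $p$-adic $L$-function, which in the present normalization is $\cL_p(s,-s,\chi)$ (cf. \cite[Theorem~8.1]{HidaTil93ASENS}). This produces the factor $\cL_p(s,-s,\chi)$ in the denominator; the remaining power of $(1-\e_\frakp^s(\Frob_\pbar))$ appears once one reconciles the Euler factors at $\frakp$ and $\pbar$ of the three Katz $p$-adic $L$-functions with those produced by the two $\frakp$-stabilizations in the numerator, consistently with the relation \eqref{E:improved} to the improved $p$-adic $L$-function. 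Assembling numerator and denominator and collecting the constants yields the asserted identity for $\e_\frakp^s(\bB)$.

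The genuine difficulty lies entirely in the bookkeeping of periods and local factors. One must check that the periods $\Omega_\infty$ and $\Omega_p$ of \propref{P:3interpolation} occur with exactly matching multiplicities in numerator and denominator so that they cancel, leaving a $\bfK$-valued (indeed $\Lam_P$-valued) identity; and one must evaluate the ramified local zeta integrals at the primes dividing $d_K$, $C_{\rm i}$ and $C_{\rm s}$ precisely enough to see that they collapse to the clean factor $\Dmd{d_K\bfc}^s/(1-\e_\frakp^s(\Frob_\pbar))^2$ rather than to something larger; this is exactly where the minimality hypothesis \eqref{min} on the conductor of $\brch$ and the choice $C=d_KC_{\rm i}C_{\rm s}$ are needed.
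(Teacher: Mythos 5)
Your strategy coincides with the paper's: reduce to arithmetic specializations $s=k$, realize $\e_\frakp^k(\bB)$ as a ratio of Petersson pairings, unfold the adelic Eisenstein series to get the Rankin--Selberg $L$-function, use the dihedral factorization $L(s,\pi_1\times\pi_2)=L(s+k/2,\psi^{-k})L(s+k/2,\brch^{c-1}\psi^{-k})$ together with the adjoint factorization $L(1,\pi_1,\Ad)=L(1,\tau_{K/\Q})L(1,\chi\psi_-^{k})$ for the norm, and match everything against \propref{P:3interpolation}. So the approach is essentially identical.

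That said, what you have written is a roadmap rather than a proof: every step you defer as ``bookkeeping'' is exactly where the paper's proof lives. Concretely, the paper must (i) compute the local zeta integral at $p$ for the ordinary Whittaker function $W^{\Ord}_{\pi_1,p}$ twisted by $t_n$, which is what produces the two Euler factors $(1-\psi^{-k}(\frakp)p^{-s-k/2})(1-\brch^{c-1}\psi^{-k}(\frakp)p^{-s-k/2})$; (ii) evaluate the tame integrals, getting $\Psi_\ell^*=1$ at $\ell\mid d_KC_{\rm s}$ and $\abs{C_{\rm i}}_{\Q_\ell}^{-1}(1+\ell^{-1})$ at $\ell\mid C_{\rm i}$, which must cancel against the level factor $N\prod_{\ell\mid C_{\rm i}}(1+\ell^{-1})$ in the Petersson norm formula of \cite[Theorem 7.1]{HidaTil93ASENS} --- this is precisely where the minimality condition \eqref{min} enters; (iii) run the functional equations and compute the epsilon factors $\varepsilon(1,\psi^{-k})$, $\varepsilon(1,\chi\psi^{-k})$, $\varepsilon(1,\chi\psi_-^k)$ to produce the monomial $(d_K\bfc)^k$ and to pass from $L(1,\cdot)$ to $L(0,\cdot)$; and (iv) observe that the $\pbar$-Euler factors $(1-\psi^k(\pbar))^2$ present in $\cL_p(k,0,\bfone)\cL_p(k,0,\chi)$ but absent from the unfolded integral account for the denominator $(1-\e_\frakp^s(\Frob_\pbar))^2$ (using $\chi(\pbar)=1$). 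One small misattribution: the factor $L(0,\tau_{K/\Q})$ comes entirely from the adjoint $L$-value in the denominator, not from the numerator's factorization as you suggest. None of this invalidates the plan, but as submitted the argument establishes the shape of the formula, not the formula itself.
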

 \begin{proof} 
  This can be proved by Hida's $p$-adic Rankin-Selberg method. We shall use the representation theoretic approach in \cite{HsiehChen20}. We follow the notation in \cite[Section 5, Section 6]{HsiehChen20}.  It suffices to show that for all but finitely many positive integer $k$ with $k\con 0\pmod{p-1}$, 
  \beq\label{E:34}\e_\frakp^k(\bB)=\frac{2\cL_p(k,0,\bfone)\cL_p(k,0,\chi)}{L(0,\tau_{K/\Q})\cL_p(k,-k,\chi)\zeta_p(1-k)}\cdot\frac{(d_K\bfc)^k}{(1-\psi^k(\Frob_\pbar))^2}. \eeq
 Here recall that $\psi$ is the idele class character of $K$ corresponding to $\e_\frakp$ with $\psi_\infty(z)=z$. To evaluate $\e_\frakp^k(\bB)$, we consider the spectral decomposition \beq\label{E:36}\begin{aligned}&\frac{2C}{\zeta_p(1-k)}\cdot e_\Ord(\theta_\brch^\circ E_k(C))\\
 =&\cC_k(\brch,\brch^c)\cdot\theta_{\brch\psi^{-k}}^{(\frakp)}+\cC_k(\brch^c,\brch)\cdot \theta_{\brch^c\psi^{-k}}^{(\frakp)}+\sH_{k}\in \cS_{k+1}(Np,\brch_+^{-1}\tau_{K/\Q}),\end{aligned}\eeq
  where $\sH_{k}$ is orthogonal to the space spanned by $\theta_{\brch^{-1}\psi^{-k}}, \theta_{\brch^{-1}\psi^{-k}}^{(\frakp)}$, $\theta_{\brch^{-c}\psi^{-k}}$ and $\theta_{\brch^{-c}\psi^{-k}}^{(\frakp)}$ under the Petersson inner product. Since $\e_\frakp^k(\bftheta_\brch)=\theta_{\brch\psi^{-k}}^{(\frakp)}$ is a $p$-stabilized newform of weight $k+1$, the decomposition \eqref{E:36} is indeed obtained by the image of \eqref{E:32} under the map $\e_\frakp^k$, and hence \[\e_\frakp^k(\bB)=\iota_p^{-1}(\cC_k(\brch^c,\brch)).\] 
 Now we use the adelic Rankin-Selberg method to compute the value $\cC_k(\brch^c,\brch)$. Let $f^\circ=\theta_{\brch^{-1}\psi^{-k}}$ and $g^\circ=\theta_\brch^\circ$ be the newforms associated with Hecke characters $\brch^{-1}\psi^{-1}$ and $\brch$. Let $\om:=\brch_+^{-1}\tau_{K/\Q}^{-1}$ viewed as an idele class character of $\Q$. Let $\varphi_{f^\circ}:=\varPhi(f^\circ)\in \cA_{k+1}(\om)$ and $\varphi_{g^\circ}=\varPhi(g^\circ)\in\cA_1(\om^{-1})$ be the automorphic newforms corresponding to $f^\circ$ and $g^\circ$ via the map $\varPhi$ in \cite[(2.4)]{HsiehChen20}. Let $\pi_1$ and $\pi_2$ be the \emph{unitary} cuspidal automorphic representation of $\GL_2(\A)$ associated with $\varphi_{f^\circ}$ and $\varphi_{g^\circ}$. The  $\pi_1$ and $\pi_2$ are the automorphic inductions of the idele class characters $\brch^{-1}\psi^{-k}\Abs_{\A_K}^{\frac{k}{2}}$ and $\brch$, and the automorphic forms $\varphi_{f^\circ}$ and $\varphi_{g^\circ}$ are normalized new vectors in $\pi_1$ and $\pi_2$. In addition, we have the equality of automorphic $L$-functions and Dirichlet series of modular forms
 \begin{align*}L(s,\pi_1)=&\Gamma_\C(s+\frac{k}{2})D(s+\frac{k}{2},f^\circ)=L(s+\frac{k}{2},\brch^{-1}\psi^{-k});\\
 L(s,\pi_2)=&\Gamma_\C(s)L(s,g^\circ)=L(s,\brch).\end{align*}
 Let $f=\theta_{\brch^{-1}\psi^{-k}}^{(\frakp)}$ be the $\frakp$-stablized newform associated with $f^\circ$ and let $\breve f:=\theta_{\brch^c\psi^{-k}}^{(\frakp)}$ be the specialization of $\e_\frakp^s(\bftheta_{\brch^c})$ at $s=k$. Then the automorphic representation generated by the associated automorphic forms $\varphi_{\breve f}$ is the contragredient representation $\pi_1^\vee=\pi_1\ot\om^{-1}$. Define the $\C$-linear pairing $\pairing:\cA^0_{-k-1}(\om)\times\cA_{k+1}(\om^{-1})\to\C$ by 
  \[\pair{\varphi_1}{\varphi_2}=\int\limits_{\A_\Q^\x\GL_2(\Q)\bksl \GL_2(\A_\Q)}\varphi_1(g)\varphi_2(g)\rmd^{\rmt} g.\]
Here $\rmd^\rmt g$ is the Tamagawa measure of $\PGL_2(\A)$. By \cite[Proposition 5.2]{HsiehChen20}, for $n\gg 0$ large enough, we have
 \begin{align*}\cC_k(\brch^c,\brch)=&\frac{\pair{\rho(\cJ_\infty t_n)\varphi_f}{\varphi_{g^\circ}\cdot E_\A(-,f_{\cD,s-1/2})}|_{s=1-\frac{k}{2}}}{\pair{\rho(\cJ_\infty t_n)\varphi_f}{\varphi_{\breve f}}}\cdot \frac{2C}{\zeta_p(1-k)},
 \end{align*}
 where $\cJ_\infty=\pDII{-1}{1
 }\in\GL_2(\R)$ and $t_n=\pMX{0}{p^{-n}}{-p^n}{0}\in\GL_2(\Qp)$. In order to explain the calculation of $\cC_k(\brch^c,\brch)$ by the adelic Rankin-Selberg method, we need to prepare some notation from the theory of automorphic representations. For any cuspidal automorphic representation $\pi$ of $\GL_2(\A)$, let $\cW(\pi)$ denote the Whittaker model of $\pi$ associated with the additive character $\bfe:\A/\Q\to\C^\x$. For each place $v$ of $\Q$, let $\cW_v(\pi)$ be the local component of $\cW(\pi)$ at $v$. For $(W_1,W_2)\in \cW_v(\pi_1)\times\cW_v(\pi_2)$, let $\Psi(W_1,W_2,f_{\cD,s,v})$ be the local zeta integral defined in \cite[(5.10)]{HsiehChen20}.
 If $v$ is finite, let $W_{\pi,v}\in \cW_v(\pi)$ be the new Whittaker function with $W_{\pi,v}(1)=1$ and if $v=\infty$ and $\pi_\infty$ is discrete series, let $W_{\pi,v}$ be the Whittaker of minimal $\SO(2)$-type with $W_{\pi,\infty}(1)=1$ (\cf\cite[\S 2.6.4]{HsiehChen20}).  For $\varphi\in \cA_0(\om)$, the Whittaker function $W_\varphi:\GL_2(\A)\to\C$ is defined by  
 \[W_{\varphi}(g)=\int_{\A/\Q}\varphi(\pMX{1}{x}{0}{1}g)\bfe(-x)\rmd x.\] 
In our setting, the Whittaker functions of $\varphi_f\in\pi_1$ and $\varphi_{g^\circ}\in \pi_2$ are given by
\[W_{\varphi_f}=W_{\pi_1,p}^\Ord\prod_{v\neq p}W_{\pi_1,v};\quad W_{\varphi_{g^\circ}}=\prod_v W_{\pi_2,v},\]
where $W_{\pi_1,p}^\Ord\in \cW(\pi_{1,p})$ is the ordinary Whittaker function characterized by $W_{\pi_1,p}^\Ord(\pDII{a}{1})=\alpha_f(a)\abs{a}_{\Qp}^\onehalf\bbI_{\Zp}(a)$, where $\al_f:\Qp^\x\to\C^\x$ is the unramified character with $\al_f(p)=\brch^{-1}\psi^{-k}(\pbar)p^{-\frac{k}{2}}$ (See \cite[Definition 2.1]{HsiehChen20}). Following \cite[Chapter V]{Jacquet72LNM278} (\cf\cite[(5.11)]{HsiehChen20}), we have the identity\begin{align*}&\pair{\rho(\cJ_\infty t_n)\varphi_f}{\varphi_{g^\circ}\cdot E_\A(-,f_{\cD,s})}=\int\limits_{\PGL_2(\Q)\bksl \PGL_2(\A)}\varphi_f(g\cJ_\infty t_n)\varphi_{g^\circ}(g)E_\A(g,f_{\cD,s})\rmd^{\rmt}g\\
&=\frac{1}{\zeta_\Q(2)}\Psi(W_{\pi_1,p}^\Ord,W_{\pi_2,p},f_{\cD,s,p})\Psi(\rho(\cJ_\infty)W_{\pi_1,\infty},W_{\pi_2,\infty},f_{\cD,s,\infty})\prod_{v\neq p,\infty}\Psi(W_{\pi_1,v},W_{\pi_2,v},f_{\cD,s,v}).
\end{align*} By the calculation in \cite[Proposition 5.3]{HsiehChen20} with $k_1=k_3=k+1$ and $k_2=1$, we find that
 \beq\label{E:33}\begin{aligned}&\pair{\rho(\cJ_\infty t_n)\varphi_f}{\varphi_{g^\circ}\cdot E_\A(-,f_{\cD,s-1/2})}|_{s=1-\frac{k}{2}}\\=&\frac{L(s,\pi_1\times\pi_2)}{\zeta_\Q(2)[\SL_2(\Z):\Gamma_0(N)]}\cdot\frac{(\sqrt{-1})^k}{2^{k+2}}\cdot\Psi_p(s)\prod_{\ell\mid N}\Psi^*_\ell(s)\Big\vert_{s=1-\frac{k}{2}},\end{aligned}\eeq
where $L(s,\pi_1\times\pi_2)$ is the Rankin-Selberg $L$-function for $\pi_1\times\pi_2$, $\Psi^*_\ell(s)$ and $\Psi_p(s)$ are local zeta integrals defined by \begin{align*}\Psi^*_\ell(s)&=\frac{\zeta_{\Q_\ell}(1)}{\zeta_{\Q_\ell}(2)\abs{N}_{\Q_\ell}}\frac{\Psi(W_{\pi_{1,\ell}},W_{\pi_{2,\ell}},f_{\Phi_{\cD,\ell},s-1/2})}{L(s,\pi_{1,\ell}\times\pi_{2,\ell})}\text{ if }\ell\neq p,\\
\Psi_p(s)&=\frac{\Psi ( \rho (t_n)W_{\pi_{1,p}}^{\rm ord},W_{\pi_2,p},f_{\Phi_{\cD,p},s-1/2})}{L(s,\pi_{1,p}\times\pi_{2,p})}.\end{align*}
Note that $N$ is the conductor of $\pi_1$ and $\pi_2$. Let $\supp(N)$ be the set of prime divisors of $N$. In \cite[\S 5.1, page 220]{HsiehChen20}, to $(\pi_1,\pi_2)$, we associate a decomposition $\supp(N)=\Sigma_{(\rm i)}\disjoint\Sigma_{(\rm ii)}\disjoint\Sigma_{(\rm iii)}$, and in our case, $\ell\in\Sigma_{(\rm i)}$ if $\ell\mid d_KC_{\rm s}$, $\ell\in \Sigma_{(\rm ii)}$ if $\ell\mid C_{\rm i}$ and $\Sigma_{(\rm iii)}=\emptyset$. According to the computation of local zeta integrals $\Psi^*_\ell(s)$ in \cite[Lemma 6.3, Lemma 6.5]{HsiehChen20} at $\ell\mid N$, we have
\[\Psi_\ell^*(s)=1\text{ if }\ell\mid d_KC_{\rm s};\quad \Psi^*_\ell(s)=\abs{C_{\rm i}}_{\Q_\ell}^{-1}(1+\ell^{-1})\text{ if }\ell\mid C_{\rm i}.\]
We compute the local zeta integral $\Psi_p(s)$ by a similar calculation in \cite[Lemma 6.1]{HsiehChen20}. Put $W_1=W_{\pi_1,p}^\Ord$ and $W_2=W_{\pi_2,p}$. Then $\Psi ( \rho (t_n)W_{\pi_{1,p}}^{\rm ord},W_{\pi_2,p},f_{\Phi_{\cD,p},s-1/2})$ equals 
\begin{align*}
&\frac{\zeta_{\Qp}(2)}{\zeta_{\Qp}(1)}\int_{\Qp^\x}\int_{\Qp}W_1(\pDII{y}{1}\pMX{0}{-1}{1}{x}t_n)W_2(\pDII{-y}{1}\pMX{0}{-1}{1}{x})
\abs{y}_{\Qp}^{s-1}\\
&\times f_{\Phi_{\cD,p},s-\onehalf}(\pMX{0}{-1}{1}{x})\rmd x\rmd^\x y\\
=&\frac{\zeta_{\Qp}(2)}{\zeta_{\Qp}(1)}\int_{\Qp}\int_{\Qp^\x}W_1(\pDII{yp^n}{p^{-n}}\pMX{1}{0}{-p^{2n}x}{1})W_2(\pDII{-y}{1}\pMX{0}{-1}{1}{x})\abs{y}_{\Qp}^{s-1}\\
&\times \bbI_{\Zp}(x)\rmd^\x y\rmd x\\
=&\frac{\zeta_{\Qp}(2)\al_f\Abs_{\Qp}^\onehalf(p^{2n})\om_{p}^{-1}(p^n)}{\zeta_{\Qp}(1)}\int_{\Qp^\x}W_2(\pDII{-y}{1})\al_f\Abs_{\Qp}^{s-\onehalf}(y)\rmd^\x y\\
=&\frac{\om_{p}^{-1}\al_f^2\Abs_{\Qp}(p^{n})\zeta_{\Qp}(2)}{\zeta_{\Qp}(1)}\cdot L(s,\pi_{2,p}\ot\al_f),
\end{align*}
so we obtain that
\[\Psi_p(s)=\frac{\om_{p}^{-1}\al_f^2\Abs_{\Qp}(p^{n})\zeta_{\Qp}(2)}{\zeta_{\Qp}(1)}\cdot L(s,\pi_{1,p}\times\pi_{2,p})(1-\psi^{-k}(\frakp)p^{-s-\frac{k}{2}})(1-\brch^{c-1}\psi^{-k}(\frakp)p^{-s-\frac{k}{2}}).\]
From the above equations with the equality of $L$-functions\[L(s,\pi_1\times\pi_2)=L(s+k/2,\theta_\brch^\circ\ot\theta_{\brch^{-1}\psi^{-k}})=L(s+k/2,\psi^{-k})L(s+k/2,\brch^{c-1}\psi^{-k}), \] we find that \eqref{E:33} equals
 \begin{align*}
&\pair{\rho(\cJ_\infty t_n)\varphi_f}{\varphi_{g^\circ}\cdot E_\A(-,f_{\cD,s-1/2})}|_{s=1-\frac{k}{2}}=\frac{L(1,\psi^{-k})L(1,\brch^{c-1}\psi^{-k})}{\zeta_\Q(2)[\SL_2(\Z):\Gamma_0(N)]}\cdot  \frac{(\sqrt{-1})^k}{2^{k+2}}\\
&\times
  (1-\psi^{-k}(\frakp)p^{-1})(1-\brch^{c-1}\psi^{-k}(\frakp)p^{-1})\cdot \frac{\om_{p}^{-1}\al_f^2\Abs_{\Qp}(p^{n})\zeta_{\Qp}(2)}{\zeta_{\Qp}(1)}\cdot
  C_{\rm i}\prod_{q\mid C_{\rm i}}(1+q^{-1}).
 \end{align*}
On the other hand, by \cite[Lemma 3.6]{Hsieh21AJM}, 
 \[\pair{\rho(\cJ_\infty t_n)\varphi_f}{\varphi_{\breve{f}}}=\frac{\norm{f^\circ}_{\Gamma_0(N)}^2\cE(f,\Ad)}{\zeta_\Q(2)[\SL_2(\Z):\Gamma_0(N)]}\cdot\frac{\om_{p}^{-1}\al_f^2\Abs_{\Qp}(p^n)\zeta_{\Qp}(2)}{\zeta_{\Qp}(1)},\]
 where $\cE(f,\Ad)=(1-\chi\psi^{(1-c)k}(\frakp^{-1})p^{-1})(1-\chi\psi^{(1-c)k}(\pbar))
$. By the minimal condition \eqref{min} of $\brch$, the level of the newform $f^\circ=\theta_\brch^\circ$ is minimal among its Dirichlet twists. By \cite[Theorem 7.1]{HidaTil93ASENS}, 
 \[\norm{f^\circ}_{\Gamma_0(N)}^2=2^{-(k+1)}L(1,\pi_1,\Ad)\cdot N\prod_{\ell\mid C_{\rm i}}(1+\ell^{-1})\quad (N=CC_{\rm i}).\]
Put $\psi_-=\psi^{1-c}$. From the above equations, we deduce that
 \beq\label{E:37}\cC_k(\brch^c,\brch)=\frac{(\sqrt{-1})^kL(1,\psi^{-k})L(1,\brch^{c-1}\psi^{-k})}{2L(1,\pi_1,\Ad)}\cdot \frac{ (1-\psi^{-k}(\frakp)p^{-1})(1-\brch^{c-1}\psi^{-k}(\frakp)p^{-1})}{(1-\chi\psi_-^k(\frakp)^{-1}p^{-1})(1-\chi\psi_-^k(\pbar))} \cdot \frac{2}{\zeta_p(1-k)}.\eeq
 By the functional equations of $L$-functions, one has
 \begin{align*}
 L(1,\psi^{-k})L(1,\brch^{c-1}\psi^{-k})&=\varepsilon(1,\psi^{-k})\varepsilon(1,\chi^{-1}\psi^{-k})L(0,\psi^k)L(0,\chi\psi^k),\\
 L(1,\pi_1,\Ad)&=L(1,\tau_{K/\Q})L(1,\brch^{1-c}\psi^{(1-c)k})\\
 &=\sqrt{d_K}^{-1}\varepsilon(1,\chi\psi_-^k)\cdot L(0,\tau_{K/\Q})L(0,\chi\psi_-^k).
 \end{align*}
 Since $\psi^k$ is unramified everywhere and $\psi_-|_{\A^\x_\Q}=1$, we have \begin{align*}
 \varepsilon(1,\psi^{-k})&=\sqrt{d_K}^{-1}(\sqrt{-d_K})^k,\quad
  \varepsilon(1,\chi\psi^{-k})=\varepsilon(1,\chi)(\sqrt{-d_K}\bfc)^k,\\
\varepsilon(1,\chi\psi_-^k)& =\varepsilon(1,\chi).\end{align*}
It follows that
\beq\label{E:38}\frac{(\sqrt{-1})^kL(1,\psi^{-k})L(1,\brch^{c-1}\psi^{-k})}{L(1,\pi_1,\Ad)}=\frac{L(0,\psi^k)L(0,\chi\psi^k)}{(\sqrt{-1})^kL(0,\tau_{K/\Q})L(0,\chi\psi_-^k)}\cdot (d_K\bfc)^k.\eeq
 By the interpolation formulae of the Katz $p$-adic $L$-function in \propref{P:3interpolation}, we find that
\beq\label{E:39}\begin{aligned}\frac{\cL_p(k,0,\chi)\cL_p(k,0,\bfone)}{\cL_p(k,-k,\chi)}=&\frac{1}{2(\sqrt{-1})^k}\frac{L(0,\chi\psi^{k})L(0,\psi^{k})}{L(0,\chi\psi_-^k) }\\
&\times \frac{ (1-\psi^{k}(\frakp^{-1})p^{-1})(1-\chi\psi^{k}(\frakp^{-1})p^{-1})(1-\psi^k(\pbar))^2}{(1-\chi\psi_-^k(\frakp^{-1})p^{-1})(1-\chi\psi_-^k(\pbar))}.\end{aligned}\eeq
Combining \eqref{E:37}, \eqref{E:38} and \eqref{E:39}, we obtain \eqref{E:34}.   \end{proof}
Recall that $\Lam_P$ is the localization of $\Lam_K$ at the augmentation ideal $P$. Let $h$ be the class number of $K$ and let $\uf\in K^\x$ be a generator of $\frakp^h$. Put
 \[\sL(\bfone):=-\frac{\log_p\uf}{h}=\frac{\log_p\ol{\uf}}{h}\neq 0.\]
\begin{cor}\label{C:3RS}We have $\bB^{-1}\in P\Lam_P$. Let $B(s):=\e_\frakp^s(\bB^{-1})$. Then  
 \[
 \frac{d}{ds}B(s)\Big\vert_{s=0}=\frac{\cL'_p(s,-s,\chi)\rvert_{s=0}}{\cL_\frakp^*(0,\chi)}.\]
 \end{cor}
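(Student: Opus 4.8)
The plan is to combine \propref{P:3RS} with the improved $p$-adic $L$-function relation \eqref{E:improved}, cancel the trivial-zero factors, and thereby reduce the statement to the $\chi=\bfone$ case of the $p$-adic Kronecker limit formula. First I would rewrite $\e_\frakp^s(\bB)$: applying \eqref{E:improved} both to $\chi$ and to $\bfone$, and using $\chi(\pbar)=1$ (this is \eqref{E:11}) and $\bfone(\pbar)=1$, one has $\cL_p(s,0,\chi)=(1-\e_\frakp^s(\Frob_\pbar))\cL_\frakp^*(s,\chi)$ and $\cL_p(s,0,\bfone)=(1-\e_\frakp^s(\Frob_\pbar))\cL_\frakp^*(s,\bfone)$. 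Substituting both into the formula of \propref{P:3RS}, the factor $(1-\e_\frakp^s(\Frob_\pbar))^2$ produced in the numerator cancels the identical factor in the denominator, leaving
\[B(s)=\e_\frakp^s(\bB^{-1})=\frac{L(0,\tau_{K/\Q})\,\zeta_p(1-s)}{2\,\cL_\frakp^*(s,\bfone)}\cdot\frac{\cL_p(s,-s,\chi)}{\cL_\frakp^*(s,\chi)\,\Dmd{d_K\bfc}^s}=:H(s)\cdot F(s).\]

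The heart of the matter is to show that the first factor $H(s)$ is holomorphic at $s=0$ with $H(0)=1$. Since $\zeta_p(1-s)$ has a simple pole at $s=0$, this is equivalent to showing that $\cL_\frakp^*(s,\bfone)$ also has a simple pole there with $2\Res_{s=0}\cL_\frakp^*(s,\bfone)=L(0,\tau_{K/\Q})\,\Res_{s=0}\zeta_p(1-s)$, for then $H$ is the quotient of two functions with matching principal parts at $s=0$, hence holomorphic with value $1$. This is precisely the degenerate $\chi=\bfone$ case of the $p$-adic Kronecker limit formula (equivalently, a residue/class-number formula for the improved Katz $p$-adic $L$-function): for $\chi\neq\bfone$ the improved $p$-adic $L$-function is finite at $s=0$ with value \eqref{E:improved2}, but for the trivial character the Robert unit underlying \eqref{E:improved2} degenerates and $\cL_\frakp^*(\bfone)$ instead acquires a simple pole, whose residue is governed by the value $L(0,\tau_{K/\Q})$ of the complex Dirichlet $L$-function. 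I would extract this from Shalit's explicit formulas in \cite{Shalit87book} for $L_{p,\frakf}$, together with the period normalization fixed in \propref{P:3interpolation}; equivalently, it may be phrased as the identification of $\cL_p(0,0,\bfone)$ with $\sL(\bfone)$ times $L(0,\tau_{K/\Q})^{-1}\Res_{s=0}\zeta_p(1-s)$, using that $\tfrac{d}{ds}(1-\e_\frakp^s(\Frob_\pbar))\big|_{s=0}$ is a non-zero multiple of $\sL(\bfone)$. This residue computation — and the attendant bookkeeping of the complete-$L$-function conventions and of $(\Omega_p,\Omega_\infty)$ — is the step I expect to be the main obstacle; everything else is formal once it is settled.

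It then only remains to evaluate $F$ near $s=0$. By \remref{R:34} and \eqref{E:11} one has $\cL_p(0,0,\chi)=L_p(0,\chi)=0$, whereas $\cL_\frakp^*(0,\chi)\neq 0$ by \eqref{E:improved2} and the Brumer--Baker theorem, and $\Dmd{d_K\bfc}^0=1$. Hence $F(0)=0$ and $F'(0)=\cL'_p(s,-s,\chi)\big|_{s=0}/\cL_\frakp^*(0,\chi)$. Combining with $H(0)=1$ yields $B(0)=0$, i.e.\ $\bB^{-1}\in P\Lam_P$, and by the product rule
\[\frac{d}{ds}B(s)\Big|_{s=0}=H(0)F'(0)+H'(0)F(0)=\frac{\cL'_p(s,-s,\chi)\big|_{s=0}}{\cL_\frakp^*(0,\chi)},\]
which is the assertion. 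One can also run the argument without invoking \eqref{E:improved} for $\bfone$, keeping $\cL_p(s,0,\bfone)$ in the denominator; the required input then takes the form $\cL_p(s,0,\bfone)\big/\bigl((1-\e_\frakp^s(\Frob_\pbar))\zeta_p(1-s)\bigr)\to L(0,\tau_{K/\Q})/2$ as $s\to 0$.
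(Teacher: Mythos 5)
Your proposal is correct and follows essentially the same route as the paper: the paper also inverts \propref{P:3RS}, uses \eqref{E:improved} to cancel the trivial-zero factor, and handles the step you flag as the main obstacle in precisely your ``alternative'' form, namely $\cL_p(s,0,\bfone)/\bigl((1-\e_\frakp^s(\Frob_\pbar))\zeta_p(1-s)\bigr)\to L(0,\tau_{K/\Q})/2$, proved by combining the residue formula $\zeta_p(1-s)(1-\e_\frakp^s(\Frob_\pbar))|_{s=0}=(p^{-1}-1)\sL(\bfone)$ with Katz's $p$-adic Kronecker limit formula $\cL_p(0,0,\bfone)=(p^{-1}-1)\sL(\bfone)\cdot 2^{-1}L(0,\tau_{K/\Q})$. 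The remaining product-rule argument is identical.
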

 \begin{proof}
By \propref{P:3RS},
\beq\label{E:310} B(s)=\cL_p(s,-s,\chi)\cdot \frac{L(0,\tau_{K/\Q})}{2\cL_p(s,0,\bfone)\cL_\frakp^*(s,\chi)}\cdot\frac{\zeta_p(1-s)(1-\e_{\frakp}^s(\Frob_{\pbar}))}{\Dmd{d_K\bfc}^s}. \eeq
By the residue formula of the $p$-adic zeta function,\[
 \zeta_p(1-s)(1-\e_{\frakp}^s(\Frob_{\pbar}))|_{s=0}=(p^{-1}-1)\sL(\bfone)\neq 0.\]
 On the other hand, from Katz's $p$-adic Kronecker limit formula \cite[Theorem 5.2, page 88]{Shalit87book} and the fact that $L(0,\tau_{K/\Q})=2h/\#(\cO_K^\x)$, we deduce that
 \[ \cL_p(0,0,\bfone)=(1-p^{-1})\frac{\log_p \ol{\uf}^{-1}}{\#(\cO_K^\x)}=(p^{-1}-1)\sL(\bfone)\cdot 2^{-1}L(0,\tau_{K/\Q})\neq 0.\]
By \eqref{E:improved2} and the Brumer-Baker theorem, $\cL_\frakp^*(0,\chi)\neq 0$ and $B(0)=0$. We thus conclude from \eqref{E:310} that $\bB^{-1}\in P\Lam_P$ and the desired formula of the derivative $B'(0)$.
 \end{proof}
 \section{Galois cohomology classes and $\sL$-invariants}
 \subsection{Cohomological interpretation of $\sL$-invariants}
Let $F=\Frac\cW$. As in the previous section, $\chi:\Gal(K(\bfc)/K)\to F^\x$ is a non-trivial ring class character unramified outside $pd_K$ with $\chi(\pbar)=1$, and $\brch$ is a ray class character of conductor $\frakc$ with $\chi=\brch^{1-c}$. For a finite set $S$ of primes of $\cO_K$, denote by $\rmH^1_S(K,\chi)$ the subspace of cohomology classes unramified outside $S$. By the global Poitu-Tate duality, it is known that $\rmH^1_{\emptyset}(K,\chi)=\rmH^1_{\stt{\pbar}}(K,\chi)=\stt{0}$ and $\dim_F\rmH^1(K,\chi)=\dim_F\rmH^1_{\stt{\frakp,\pbar}}(K,\chi)=1$ (\cf \cite[Proposition 1.3]{BeD21Adv}). Let $\loc_{\pbar}:\rmH^1(K,\chi)\to \rmH^1(K_{\pbar},\chi)=\Hom(G_{K_\pbar},F)$ be the localization at $\pbar$. With the embedding $\iota_p:K\hookrightarrow\Qp$, we identity $K_p:=K\ot\Qp\iso \Qp\oplus \Qp$ by $\al\ot x\mapsto (\iota_p(\al) x,\iota_p(\ol{\al})x)$. Let \[\rec_{K_p}:K_p^\x=\Qp^\x\oplus\Qp^\x\to C_K\stackrel{\rec_K}\longto G_K^{ab}\] be the composition of the natural inclusion $K_p^\x\hookto C_K=K^\x\bksl \A_K^\x$ and the reciprocity law map $\rec_K$. Therefore, for any $\kappa\in \rmH^1(K,\chi)$, we can identify $\loc_\pbar(\kappa)\in\Hom(G_{K_\pbar},\Cp)$ with an element in $\Hom(\Qp^\x,\Cp)$ by \[\loc_\pbar(\kappa)(a)=\kappa(\rec_{K_p}(1,a))\text{ for }a\in\Qp^\x.\]
  \begin{lm}\label{L:41} Let $\kappa$ be a non-zero class in $\rmH^1(K,\chi)$ and write
 \[\loc_{\pbar}(\kappa)=x \cdot \Ord_p+y\cdot  \log_p.\] Then $y\neq 0$, and  \[\sL(\chi)=\frac{x}{y}.\]
 \end{lm}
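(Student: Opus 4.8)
\emph{The plan} is to recover the identity from the global reciprocity law for the Tate cup product, evaluated against the Galois cohomology class attached to the generator $u$ of $V_\chi$; the vanishing of one of the two local terms will come exactly from $u\in\ker\log_\frakp$. First I would record the relevant local identifications. Since $\chi$ is unramified at $p$ with $\chi(\pbar)=1$, and since $\chi^c=\chi^{-1}$ for the ring class character $\chi$ (so also $\chi(\frakp)=\chi(\pbar)^{-1}=1$), the restriction of $\chi$ to the decomposition group at $\frakp$ and at $\pbar$ is trivial. Hence for $v\in\{\frakp,\pbar\}$ we have $\rmH^1(K_v,\chi)=\Hom_{\rm cont}(K_v^\times,F)$ and $\rmH^1(K_v,\chi^{-1}(1))=(K_v^\times)^{\wedge}\otimes F$ by local class field theory and Kummer theory, and under these identifications the local Tate pairing becomes, up to a universal sign, the tautological pairing; in particular $\kappa_{\rm ur}$ corresponds to the valuation $\Ord_v$ and $\kappa_{\rm cyc}$ to (a sign times) the branch of $\log_p$ vanishing on uniformizers. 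I would also use the Kummer identification $\cO_{H,\pbar}^\times[\chi]=\rmH^1_{\{\pbar\}}(K,\chi^{-1}(1))\subseteq\rmH^1_{\{\frakp,\pbar\}}(K,\chi^{-1}(1))$: a class $\lambda_w$ in it coming from a $\pbar$-unit $w$ is unramified at $\frakp$, so $\loc_\frakp(\lambda_w)$ lies on the line $\rmH^1_{\rm ur}(K_\frakp,\chi^{-1}(1))\cong F$, and this isomorphism sends $\loc_\frakp(\lambda_w)$ to $\log_\frakp(w)$.

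\emph{The main computation.} For $\kappa\in\rmH^1(K,\chi)=\rmH^1_{\{\frakp,\pbar\}}(K,\chi)$ and any $\lambda\in\rmH^1_{\{\frakp,\pbar\}}(K,\chi^{-1}(1))$, global reciprocity (Poitou-Tate) gives $\sum_v\langle\loc_v\kappa,\loc_v\lambda\rangle_v=0$, and since both classes are unramified outside $\{\frakp,\pbar\}$ and $K$ has no real place, only $v\in\{\frakp,\pbar\}$ contribute:
\[\langle\loc_\frakp\kappa,\loc_\frakp\lambda\rangle_\frakp+\langle\loc_\pbar\kappa,\loc_\pbar\lambda\rangle_\pbar=0.\]
I would take $\lambda=\lambda_u$, the class of the generator $u$ of $V_\chi=\ker(\log_\frakp\colon\cO_{H,\pbar}^\times[\chi]\to\Cp)$ of \eqref{E:1gen}. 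Then $\loc_\frakp(\lambda_u)=\log_\frakp(u)=0$, so the first term is $0$; evaluating the second term by local class field theory at $\pbar$, with $\Ord_\pbar(u)$ and $\log_\pbar(u)=\log_\frakp(\ol u)$ as the valuation and unit contributions of $u$ at $\pbar$, one obtains
\[x\cdot\Ord_\pbar(u)+y\cdot\log_\pbar(u)=0\]
(the signs absorbed into the conventions fixing $\kappa_{\rm cyc}$, $\log_\pbar$ and the cup product).

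\emph{Conclusion.} From \eqref{E:1gen} and $\Ord_\pbar(\ux)=0$ (as $\ux$ is a global unit) we get $\Ord_\pbar(u)=\log_\frakp(\ux)\cdot\Ord_\pbar(\vx)$, which is non-zero by the Brumer-Baker theorem (for $\log_\frakp(\ux)\neq0$) and by the choice of $\vx$. So if $y=0$ then $x=0$, hence $\loc_\pbar(\kappa)=0$; inserting this and $\lambda=\lambda_{\ux}$ (the class of the global unit $\ux$, unramified everywhere, with $\loc_\frakp(\lambda_{\ux})=\log_\frakp(\ux)\neq0$) into the displayed relation forces $\langle\loc_\frakp\kappa,\loc_\frakp(\lambda_{\ux})\rangle_\frakp=0$; as $\rmH^1_{\rm ur}(K_\frakp,\chi^{-1}(1))$ is a line spanned by $\loc_\frakp(\lambda_{\ux})$, this puts $\loc_\frakp(\kappa)$ in its annihilator $\rmH^1_{\rm ur}(K_\frakp,\chi)$, so $\kappa$ is unramified at $\frakp$ and at $\pbar$, i.e.\ $\kappa\in\rmH^1_\emptyset(K,\chi)=0$, a contradiction. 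Hence $y\neq0$, and dividing the displayed relation by $y\,\Ord_\pbar(u)$ yields $x/y=-\log_\pbar(u)/\Ord_\pbar(u)=\sL(\chi)$ by \eqref{E:Linv}.

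\emph{The main obstacle.} The core above is short; the real work lies in the normalizations. The two delicate points are (i) that the $\chi$-isotypic Kummer descent from $H$ down to $K$ is compatible with localization at the primes $\frakP\mid\frakp$ and $\ol{\frakP}\mid\pbar$ singled out by $\iota_p$, so that $\loc_\frakp$, $\loc_\pbar$ of $\lambda_w$ genuinely compute $\log_\frakp(w)$, $\Ord_\pbar(w)$, $\log_\pbar(w)$; and (ii) tracking the signs in the cup-product/local-reciprocity pairing, in $\kappa_{\rm cyc}=\log_p\circ\cyc$, and in $\log_\pbar=\log_\frakp\circ c$, so that the relation comes out exactly as $x\,\Ord_\pbar(u)+y\,\log_\pbar(u)=0$ rather than with a stray sign. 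This is precisely the bookkeeping performed in the anticyclotomic setting in \cite{BeD21Adv}, which I would adapt here.
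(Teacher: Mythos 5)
Your proof is correct and is essentially the paper's argument: both rest on the Poitou--Tate reciprocity relation $\sum_v\langle\loc_v\kappa,\loc_v\lambda\rangle_v=0$ applied to Kummer classes of $\pbar$-units in the $\chi$-eigenspace, the paper pairing with the basis $\{\ux,\vx\}$ and eliminating the $\log_\frakp$-coordinate of $\loc_\frakp(\kappa)$, while you pair directly with the generator $u$ of $V_\chi=\ker\log_\frakp$ (the same linear combination), which kills the $\frakp$-term at once. The only cosmetic differences are that the paper gets $y\neq 0$ in one line from the stated vanishing of the relevant Selmer group rather than by your longer contradiction, and that your ``unramified'' local condition for $\chi^{-1}(1)$ at $\frakp$ should really be the Bloch--Kato finite (unit) condition --- neither affects the argument.
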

 \begin{proof}
 First we note that $y\neq 0$ since $\rmH^1_{\stt{\pbar}}(K,\chi)=0$. Let $\loc_\frakp(\kappa)=w\cdot \Ord_p+z\cdot \log_p$. By the relation $\pair{\loc_\frakp(\kappa)}{\loc_\frakp(x)}+\pair{\loc_{\pbar}(\kappa)}{\loc_\pbar(x)}=0$ for $x=\ux$ or $\vx$, we obtain the equations
\begin{align*}
z\cdot \log_\frakp \ux&+y\cdot \log_{\pbar}(\ux)=0;\\
z\cdot \log_\frakp \vx&+x\cdot \Ord_{\ol{\frakP}}(\vx)+y\cdot \log_{\pbar}\vx=0.
\end{align*}
The lemma now follows. 
\end{proof}

 \subsection{Construction of cohomology classes}
Let $S$ be the set of prime factors of $\frakp\bfc$. Let $G_{K,S}=\Gal(K_S/K)$, where $K_{S}$ is the maximal algebraic extension of $K$ unramified outside $S$. Let $\bfT^\perp\subset \End\bfS^{\perp}$ be the image of the Hecke algebra $\bfT=\bfT(N,\brch_+\tau_{K/\Q},\Lam_K)$ restricted to $\bfS^\perp$. Then $\bfT^\perp$ is a finite flat $\Lam_K$-algebra. Fix a generator $\gamma_0$ of $\Gamma_{K,\frakp}$. Then $\Lam_K$ can be identified with $\cW\powerseries{X}$ and the augmentation ideal $P$ of $\Lam_K$ is the principal ideal generated by $X=\gamma_0-1$. We use the argument in \cite[Theorem 4.2]{DDP11Ann} to construct nonzero cohomology classes in the following
 \begin{thm}\label{T:Ribet}
 Let $\lam:\bfT^\perp\to \Lam_P/(X^{n+2})$ be a $\Lam_K$-algebra homomorphism. Let $\boldsymbol\alpha:G_{K_\pbar}\to\Lam_P^\x$ be the unique unramified character such that $\boldsymbol\alpha(\Frob_{\pbar})=\lam(U_p)$. Suppose that there exists a character $\wtd\Psi:G_{K,S}\to \Lam_P/(X^{n+2})$ such that \begin{itemize}\item[(i)] $\wtd\Psi\con 1\pmod{X}$, \item[(ii)]
 $\lam(T_\ell)=\brch\wtd\Psi(\Frob_\frakl)+\brch\wtd\Psi(\Frob_{\ol{\frakl}})$ for all $\ell\ndivides p$ splits in $K$ and \item[(iii)]$\wtd\Psi|_{G_{K_\pbar}}\con\brch^{-1}{\boldsymbol \alpha}-\eta X^{n+1}\pmod{X^{n+2}}$ for some non-zero homomorphism $\eta:G_{K_\pbar}\to\Cp$.\end{itemize}  Then there exists $\kappa\neq 0\in \rmH^1(K, \chi)$ such that
\[\loc_{\pbar}(\kappa)=\frac{\brch^{-1}{\boldsymbol\alpha}-\wtd\Psi|_{G_{K_\pbar}}}{X^{n+1}}\Big\vert_{X=0}=\eta.\]
 \end{thm}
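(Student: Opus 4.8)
The plan is to realize $\kappa$ as the off-diagonal entry of the Galois representation carried by the non-CM eigensystem $\lam$, following Ribet's method in the form used in \cite[\S 4]{DDP11Ann}. First I would produce that representation. Let $\frakm\subset\bfT^\perp$ be the maximal ideal obtained as the kernel of $\lam$ reduced modulo $X$. Because $\chi=\brch^{1-c}\neq\bfone$, the residual Hecke eigensystem is the one attached to the \emph{irreducible} induced representation $\Ind_{G_K}^{G_\Q}\brch$, so by the theory of Galois pseudo-representations the pseudo-character carried by the newforms contributing to $\bfS^\perp$ localizes to a continuous representation $\rho\colon G_\Q\to\GL_2((\bfT^\perp)_\frakm)$, unramified outside $Np$, with $\tr\rho(\Frob_\ell)=T_\ell$ for $\ell\nmid Np$; since $\bfS^\perp$ is $p$-ordinary, $\rho$ carries a $G_{\Q_p}$-stable filtration whose unramified quotient has $\Frob_p$ acting by $U_p$. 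Pushing $\rho$ forward along $\lam$ yields $\rho_\lam\colon G_\Q\to\GL_2(R)$, $R=\Lam_P/(X^{n+2})$ a truncation of the discrete valuation ring $\Lam_P$, whose local quotient character along $G_{K_\pbar}\cong G_{\Q_p}$ is the character $\boldsymbol\alpha$ of the statement.

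Next I would establish the global reducibility of $\rho_\lam$ over $G_K$. By hypothesis (ii) and the Chebotarev density theorem, the pseudo-character of $\rho_\lam|_{G_K}$ equals $\Psi_1+\Psi_2$ with $\Psi_1:=\brch\wtd\Psi$ and $\Psi_2:=(\brch\wtd\Psi)\circ c$, and $\det\rho_\lam|_{G_K}=\Psi_1\Psi_2$; by (i) the residual characters $\brch$ and $\brch^c$ are distinct (as $\chi\neq\bfone$), so $\rho_\lam|_{G_K}$ is reducible, with a $G_K$-stable free $R$-line whose graded characters are $\Psi_1$ and $\Psi_2$. Comparing this with the ordinary filtration at $\pbar$ and feeding in (iii) --- which, since $\eta\neq 0$, rules out the split case and determines which of $\Psi_1,\Psi_2$ is the sub --- one arranges an exact sequence $0\to R(\Psi_1)\to\rho_\lam|_{G_K}\to R(\Psi_2)\to 0$. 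Twisting by $\Psi_2^{-1}$ converts this into a cohomology class $c_{\mathrm{glob}}\in\rmH^1(G_{K,S},R(\Theta))$ with $\Theta:=\Psi_1\Psi_2^{-1}=\chi\cdot\wtd\Psi^{1-c}\equiv\chi\pmod X$; the ramification of $\rho_\lam$ at primes dividing $N$ is absorbed into $\Psi_1$ and $\Psi_2$ separately, so $c_{\mathrm{glob}}$ is unramified outside $S$.

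To produce $\kappa$ itself I would argue that $c_{\mathrm{glob}}$ is divisible by $X^{n+1}$. Modulo $X$ it vanishes because $\bar\rho_\lam|_{G_K}=\brch\oplus\brch^c$ is semisimple; the sharper vanishing $c_{\mathrm{glob}}\equiv 0\pmod{X^{n+1}}$ follows by feeding (iii) --- which says the global extension and the local ordinary filtration at $\pbar$ agree modulo $X^{n+1}$ --- into the duality facts $\rmH^1_{\emptyset}(K,\chi)=\rmH^1_{\stt{\pbar}}(K,\chi)=0$ and $\dim_F\rmH^1(K,\chi)=1$ (\cf\cite[Proposition 1.3]{BeD21Adv}). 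Since $\chi\neq\bfone$, the inclusion $F(\chi)\cong X^{n+1}R(\Theta)\hookrightarrow R(\Theta)$ is injective on $\rmH^1$, so there is a \emph{unique} class $\kappa\in\rmH^1(K,\chi)$ with $c_{\mathrm{glob}}=X^{n+1}\kappa$ (here $X^{n+1}R\cong F$ and $\Theta$ acts through $\chi$). Finally, restricting $c_{\mathrm{glob}}$ to $G_{K_\pbar}$ and comparing the global line $R(\Psi_1)$ with the local ordinary line shows that $\loc_\pbar(c_{\mathrm{glob}})$ is represented by $\brch^{-1}\boldsymbol\alpha-\wtd\Psi|_{G_{K_\pbar}}$; dividing by $X^{n+1}$ and using (iii) gives $\loc_\pbar(\kappa)=\frac{\brch^{-1}\boldsymbol\alpha-\wtd\Psi|_{G_{K_\pbar}}}{X^{n+1}}\Big\vert_{X=0}=\eta$, which is nonzero, so $\kappa\neq 0$.

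The main obstacle is the bookkeeping in the two middle steps: promoting the reducibility of the pseudo-character $\rho_\lam|_{G_K}$ to an actual $G_K$-stable line with $\Psi_1$ as a \emph{sub}representation (and deciding which of $\Psi_1,\Psi_2$ plays that role), and proving the divisibility $c_{\mathrm{glob}}\equiv 0\pmod{X^{n+1}}$ that legitimizes dividing by $X^{n+1}$. Both are controlled by feeding the local hypothesis (iii) into the global Poitou--Tate duality constraints recalled in \cite[Proposition 1.3]{BeD21Adv}, exactly as in \cite[Theorem 4.2]{DDP11Ann}; the $p$-ordinarity of $\rho$ is precisely what couples the global and local filtrations and makes $\boldsymbol\alpha$ --- hence the derivative of the $U_p$-eigenvalue $\lam(U_p)$ --- appear in the formula for $\loc_\pbar(\kappa)$.
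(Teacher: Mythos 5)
Your overall route is the paper's route: Ribet's method in the form of \cite[Theorem 4.2]{DDP11Ann} --- produce a two--dimensional Galois representation from $\lam$, restrict to $G_K$ where its trace is $\brch\wtd\Psi+(\brch\wtd\Psi)^c$, and extract the class from the off--diagonal entry, using $\rmH^1_{\stt{\pbar}}(K,\chi)=0$ and the ordinary filtration at $\pbar$. The entry point is fine (over $G_\Q$ the residual representation is the irreducible $\Ind_{G_K}^{G_\Q}\brch$, so the pseudo-character lifts to a true representation over the local Hecke algebra, provided you henselize --- this is why the paper tensors $\bfT^\perp$ with the ring $\Lam^\dagger$ of germs of rigid functions rather than merely localizing). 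The genuine gap is at the step you yourself flag as the crux: from $\tr\rho_\lam|_{G_K}=\Psi_1+\Psi_2$ with $\ol\Psi_1=\brch\neq\brch^c=\ol\Psi_2$ you conclude that $\rho_\lam|_{G_K}$ is reducible over the Artinian ring $R=\Lam_P/(X^{n+2})$, with an exact sequence $0\to R(\Psi_1)\to\rho_\lam|_{G_K}\to R(\Psi_2)\to 0$ of free $R$-modules. This does not follow. Diagonalizing $\rho_\lam(g_0)$ for $g_0$ with $\brch(g_0)\neq\brch^c(g_0)$, the trace identity forces $a=\Psi_1$, $d=\Psi_2$ and $b(g)c(h)=0$ for all $g,h$; over $R$ this only says the ideals generated by the $b$'s and the $c$'s multiply to zero (e.g.\ $(X^i)\cdot(X^j)=0$ with $i+j\geq n+2$), and neither need vanish. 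In fact, in the paper's proof the lower-left entry $C$ is shown to be \emph{invertible} (this is exactly where the nontriviality of $\eta$ in hypothesis (iii) is used), so the stable free line with $\Psi_1$ as a subrepresentation that your argument requires genuinely does not exist; the later steps (forming a single class $c_{\rm glob}$, proving divisibility by $X^{n+1}$, dividing) therefore have nothing to stand on as written.

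The paper avoids this by working with the generalized matrix algebra
$\rho_\lam(\bfT^\dagger[G_{K,S}])=\pMX{\bfT^\dagger}{\frakt_{12}}{\frakt_{21}}{\bfT^\dagger}$
over the henselian local ring $R=\bfT^\dagger_Q$ and taking the cocycle to be $\sK=b/d$ valued in the fractional ideal $R_{12}$. The quantitative statement replacing your ``divisibility of $c_{\rm glob}$'' is the exact determination $R_{12}=\frac{A}{C}X^{n+1}R$, obtained in two moves: (1) $R_{12}$ is generated by $\stt{b(\sg)}_{\sg\in G_{K_\pbar}}$, by the Ribet-style lemma that vanishing of the class forces $R_{12}=J$, applied with $J=QR_{12}+R_{12}'$ and $\rmH^1_{\stt{\pbar}}(K,\chi)=0$, plus Nakayama; (2) the ordinarity relation $C\cdot b(\sg)=A\cdot(\boldsymbol\alpha(\sg)-a(\sg))$ on $G_{K_\pbar}$ combined with (iii), which gives $\boldsymbol\alpha-a\equiv\brch\,\eta\, X^{n+1}\pmod{X^{n+2}}$ there. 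Both the nonvanishing of $\kappa$ and the formula $\loc_\pbar(\kappa)=\eta$ then drop out of this one computation, with the ratio $A/C$ (which your sketch leaves untracked) cancelling in the normalization $R_{12}/QR_{12}\cong F$. To repair your write-up you would have to replace the claimed exact sequence by this module-theoretic (or an equivalent lattice) argument; the cohomological-divisibility formulation cannot be run before the size of the off-diagonal module is known.
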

 \begin{proof}
 \def\Tdag{\bfT^\dagger}
Let $\Lam^\dagger\supset\Lam_P$ be the local ring of rigid analytic functions around $X=0$, \ie
\[\Lam^\dagger=\stt{\sum_{n=0}^\infty a_n X^n\in F\powerseries{X} \, \middle| \text{ there exist $r>0$ such that }\lim_{n\to\infty}\abs{a_n}r^n=0}.\]
 Let $\Tdag=\bfT^\perp\ot_{\Lam_K}\Lam^\dagger$ be a finite $\Lam^\dagger$-algebra, and hence a finite product of henselian local rings. Let $I$ be the kernel of the map $\lam:\Tdag\to \Lam^\dagger/(X^{n+2})$.   Let $\sT:G_{\Q,S}\to \bfT^\perp\to \Tdag$ be the pseudo character defined by $\sT(\Frob_\ell)=T_\ell$. The assumption (ii) implies that \[\sT|_{G_{K,S}}\con \phi\wtd\Psi+\phi^c\wtd\Psi^c\pmod{I}.\] Since $\brch\neq\brch^c$, applying the theory of residually multiplicity free pseudo characters \cite[Theorem 1.4.4]{BC09Ast324}) to $\sT|_{G_{K,S}}$, we obtain a continuous representation $\rho_\lam:G_{K,S}\to \GL_2(\Frac \Tdag)$ such that 
the image of $\rho_\lam(\Tdag[G_{K,S}])$ is a generalized matrix algebra of the form
\[\rho_\lam(\Tdag[G_{K,S}])=\pMX{\Tdag}{\frakt_{12}}{\frakt_{21}}{\Tdag},\]
where $\frakt_{ij}$ are fractional $\Tdag$-ideals in $\Frac \Tdag$ and $\frakt_{12}\frakt_{21}\subset I$. Writing 
 \[\rho_\lam(\sg)=\pMX{a(\sg)}{b(\sg)}{c(\sg)}{d(\sg)}\text{ for }\sg\in G_{K,S},\] then we have \[a(\sg)\con \brch\wtd\Psi(\sg)\pmod{I};\quad d(\sg)\con\brch\wtd\Psi(c\sg c)\pmod{I}.\] 
 Note that $\Tdag/I\iso \Lam^\dagger/(X^{n+2})$ is a local ring. Let $Q$ be the maximal ideal of $\Tdag$ containing $I$. Let $R=\Tdag_Q$ be the localization of $\Tdag$ at $Q$. Then $R$ is a finite flat and reduced $\Lam^\dagger$-algebra since $N$ is the tame conductor of $\theta_\brch^\circ$. Put $R_{ij}:=\frakt_{ij}\ot_{\Tdag} R$. By \cite[Theorem 6.12]{HidaTil94Inv}, there exists $\pMX{A}{B}{C}{D}\in\GL_2(\Frac R)$ such that 
 \[\pMX{a(\sg)}{b(\sg)}{c(\sg)}{d(\sg)}\pMX{A}{B}{C}{D}=\pMX{A}{B}{C}{D}\pMX{\boldsymbol\alpha(\sg)}{*}{0}{*} \text{ for all }\sg\in G_{K_\pbar},\]
 and hence
 \beq\label{E:41}C\cdot b(\sg)=A\cdot (\boldsymbol\alpha(\sg)-a(\sg))\text{ for }\sg\in G_{K_\pbar}.\eeq
We claim that $R_{12}$ is a faithful $R$-module. To see the claim, we note that by the reducedness of $R$, the fractional field $\Frac (R)$ is isomorphic to a product of fields \[\Frac(R)=\prod_{i=1}^t L_{\sH_i},\] and each field $L_{\sH_i}$ is a finite extension of $\Lam^\dagger$ and corresponds to a cuspidal Hida family $\sH_i$. For $i=1,\dots,t$, let $\pi_i:\Frac(R)\to L_{\sH_i}$ be the natural projection map. Then $\rho_{\sH_i}:=\rho_\lam\circ\pi_i:G_{K,S}\to \GL_2(L_{\sH_i})$ is the Galois representation associated with $\sH_i$. If $R_{12}$ is not faithful, then $\pi_{i_0}(R_{12})=0$ for some $i_0$, and hence $\rho_{\sH_{i_0}}$ is reducible, which implies that $\sH_{i_0}$ is the Hida family $\Theta_{\brch_1}$ of CM forms associated with some ray class character $\brch_1\neq \brch$ or $\brch^c$ whose specialization at some arithmetic point $P'$ above $P$ agree with $\theta_\brch^{(\frakp)}$, which in turns suggests that $\brch_1+\brch_1^c=\brch+\brch^c$, and $\brch_1=\brch$ or $\brch^c$, a contradiction to the choice of $R$. 
 
Define the function $\sK:G_{K,S}\to R_{12}$ by $\sK(\sg)=b(\sg)/d(\sg)$. For any $R$-submodule $J\supset QR_{12}$ of $R_{12}$,  the reduction of $\sK$ modulo $J$  \[\ol{\sK}:=b/d\pmod{J}=\phi^{-c} b\pmod{J}:G_{K,S}\to R_{12}/J\] 
 is a continuous one-cocycle in $Z^1(G_{K,S},\chi\ot R_{12}/J)$. 
 We claim that if the class $[\ol{\sK}]\in \rmH^1(K,\chi\ot R_{12}/J)$ represented by $\ol{\sK}$ is zero, then $R_{12}=J$. We can write $b(\sg)\pmod{J}=(\phi^c(\sg)-\phi(\sg))z$ for some $z\in R_{12}/J$. Consider the $\rho_\lam(R[G_{K,S}])$-module $(R_{12}/J,R/Q)^{\rmt}$. Then the line $R(z,1)^{\rmt}\subset (R_{12}/J,R/Q)^{\rmt}$ is stable under the action of $\rho_\lam(R[G_{K,S}])$. On the other hand, $\pMX{0}{0}{0}{1}\in \rho_\lam (R[G_{K,S}])$, so we find that $(0,1)^{\rmt}\in R(z,1)^\rmt$. This implies $z=0$ and $b(\sg)\pmod{J}$ is zero. Since $R_{12}$ is the $R$-module generated by $\stt{b(\sg)}_{\sg\in G_{K,S}}$, we conclude $R_{12}=J$. In particular, this shows that $\sK\pmod{QR_{12}}$ represents a non-zero class $\kappa$ in $\rmH^1(K,\chi\ot R_{12}/Q)$. 
 
 Let $R'_{12}$ be the submodule of $R_{12}$ generated by $\stt{b_\sg}_{\sg\in G_{K_\pbar}}$ and let $J:=QR_{12}+R_{12}'$.  Then $\ol{\sK}:G_K\to R_{12}/J$ is a cocycle which is trivial at $\pbar$, and $[\ol{\sK}]\in\rmH_{\stt{\pbar}}^1(K,\chi)=\stt{0}$. By the above claim, we find that $J=R_{12}$ and hence $R'_{12}=R_{12}$ by Nakayama's lemma.  We next show that the element $C$ is invertible in $\Frac R$. Suppose not. Then $\pi_{i}(C)=0$ for some $\pi_i:\Frac(R)\to L_{\sH_i}$, and by \eqref{E:41} this would imply that $\boldsymbol\alpha(\sg)=a(\sg)$ for all $\sg\in G_{K_{\ol\frakp}}$, which contradicts to the assumption (iii).  Thus we have $C\in (\Frac R)^\x$ and  \[R_{12}=\frac{A}{C}X^{n+1}R.\] Since $\chi(\frakp)=1$, $\phi=\phi^c$ on $G_{K_\pbar}$, and it follows that for $\sg\in G_{K_\pbar}$, 
 \[\sK(\sg)\pmod{QR_{12}}=\frac{A}{C}X^{n+1}\eta(\sg)\pmod{\frac{A}{C} X^{n+2} R}=\eta(\sg)\pmod{QR_{12}}.\]
 Therefore, the non-zero class $\kappa=[\sK\pmod{QR_{12}}]\in \rmH^1(K,\chi)$ enjoys the required local description. This finishes the proof. 
 \end{proof}
Let $\Psi^{\rm univ}:G_{K,S}\to \Lam_K^\x$ be the universal character in \eqref{E:23}. By definition, $\Psi$ is unramified outside $\frakp$. For each $\sg\in G_{K,S}$, we can write $\Psi^{\rm univ}(\sg)\con 1+\eta_\frakp(\sg) X\pmod{X^2}$ for some $\eta_\frakp\in\Hom(G_{K,S}^{ab},\Cp)$. By definition, \beq\label{E:46}\e_\frakp(\Psi^{\rm univ}(\rec_{K_p}(u,1)))=\Dmd{u}^{-1}\text{ for }u\in \Zp^\x.\eeq  
Let $\bfv:=\e_\frakp(\gamma_0)$ and put
\[\eta_\pbar(\sg)=\eta_\frakp(c\sg c);\quad \eta^*_v=\log_p\bfv\cdot \eta_v,\,v=\frakp\text{ or }\pbar.\] 

\begin{lm}\label{L:43} We have
\[ \loc_{\pbar}(\eta^*_\frakp)=\sL(\bfone)\cdot \Ord_p  \quad \textup{and}\quad  \loc_{\pbar}(\eta^*_{\pbar})=-\log_p-\sL(\bfone)\cdot \Ord_p.\]
\end{lm}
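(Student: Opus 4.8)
The plan is to first make the homomorphisms $\eta^{*}_{\frakp},\eta^{*}_{\pbar}$ completely explicit in terms of the $p$-adic characters $\e_{\frakp},\e_{\pbar}$, and then to compute their $\pbar$-localizations by combining the defining interpolation property of $\e_{\frakp}$ with the class-number relation $\frakp^{h}=(\uf)$. For the first part, let $t\colon G_{K,S}\to\Zp$ be the continuous homomorphism with $\sigma|_{K_{\frakp^{\infty}}}=\gamma_{0}^{t(\sigma)}$. Then
\[
\Psi^{\rm univ}(\sigma)=[\sigma^{-1}|_{K_{\frakp^{\infty}}}]=(1+X)^{-t(\sigma)}\equiv 1-t(\sigma)X\pmod{X^{2}},
\]
so $\eta_{\frakp}(\sigma)=-t(\sigma)$, while by construction $\e_{\frakp}(\sigma)=\bfv^{\,t(\sigma)}$, hence $\log_{p}\e_{\frakp}(\sigma)=t(\sigma)\log_{p}\bfv$. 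Therefore $\eta^{*}_{\frakp}=\log_{p}\bfv\cdot\eta_{\frakp}=-\log_{p}\circ\e_{\frakp}$, and applying $c$ we get $\eta^{*}_{\pbar}=-\log_{p}\circ\e_{\pbar}$. Since $\loc_{\pbar}(\kappa)(a)=\kappa(\rec_{K_{p}}(1,a))$ and $\rec_{K_{p}}(1,a)$ is $\rec_{K}$ of the idele equal to $a$ at $\pbar$ and to $1$ at all other places, everything reduces to evaluating the additive characters $\lambda,\mu\colon\A_{K,f}^{\times}\to\Cp$, $\lambda(z)=\log_{p}\e_{\frakp}(\rec_{K}(z))$ and $\mu(z)=\log_{p}\e_{\pbar}(\rec_{K}(z))$, on such ideles.

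Next comes the key local computation. The map $\lambda$ is additive, trivial on $K^{\times}$ and on the archimedean ideles (a connected group has no nontrivial map to the pro-$p$ group $1+\frakm_{\cW}$), and the relation $\e_{\frakp}(\rec_{K}(z))=\Dmd{z_{\frakp}}$ forces $\lambda(z)=\log_{p}(z_{\frakp})$ for $z\in\wh\cO_{K}^{\times}$; in particular $\lambda$ kills $\cO_{K_{v}}^{\times}$ for every finite $v\neq\frakp$. Write $\varpi_{\frakp}$ (resp. $\varpi_{\pbar}$) for the idele equal to $p$ at $\frakp$ (resp. $\pbar$) and $1$ elsewhere. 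Taking $\uf\in K^{\times}$ with $(\uf)=\frakp^{h}$ and decomposing its principal idele into local components, only the $\frakp$-component contributes, so with the normalization $\log_{p}p=0$ one gets $h\,\lambda(\varpi_{\frakp})+\log_{p}\uf=0$, i.e. $\lambda(\varpi_{\frakp})=-\tfrac1h\log_{p}\uf=\sL(\bfone)$. Applying $\lambda$ to the principal idele of $p\in K^{\times}$, which equals $\varpi_{\frakp}\varpi_{\pbar}$ times units away from $p$, gives $\lambda(\varpi_{\frakp})+\lambda(\varpi_{\pbar})=0$, hence $\lambda(\varpi_{\pbar})=-\sL(\bfone)$.

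Finally I assemble the two formulas. Writing $a=p^{\Ord_{p}(a)}u$ with $u\in\Zp^{\times}$, the above gives
\[
\loc_{\pbar}(\eta^{*}_{\frakp})(a)=-\Ord_{p}(a)\,\lambda(\varpi_{\pbar})-\lambda(u\text{ at }\pbar)=\sL(\bfone)\,\Ord_{p}(a),
\]
so $\loc_{\pbar}(\eta^{*}_{\frakp})=\sL(\bfone)\cdot\Ord_{p}$. Since complex conjugation interchanges $\frakp$ and $\pbar$ we have $\mu=\lambda\circ c$, so $\mu(\varpi_{\pbar})=\lambda(\varpi_{\frakp})=\sL(\bfone)$ and $\mu(u\text{ at }\pbar)=\lambda(u\text{ at }\frakp)=\log_{p}u$, whence
\[
\loc_{\pbar}(\eta^{*}_{\pbar})(a)=-\Ord_{p}(a)\,\mu(\varpi_{\pbar})-\mu(u\text{ at }\pbar)=-\sL(\bfone)\,\Ord_{p}(a)-\log_{p}a,
\]
that is $\loc_{\pbar}(\eta^{*}_{\pbar})=-\log_{p}-\sL(\bfone)\cdot\Ord_{p}$.

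\emph{Anticipated difficulty.} There is no conceptual obstacle; the only genuine care is in the normalizations — which $\Qp$-factor of $K_{p}=K\otimes\Qp$ is $\frakp$ versus $\pbar$, the geometric normalization of $\rec_{K}$ (uniformizers mapping to geometric Frobenii), the branch of $\log_{p}$ with $\log_{p}p=0$, and the sign in $\Psi^{\rm univ}(\sigma)=[\sigma^{-1}|_{K_{\frakp^{\infty}}}]$. Once these are pinned down, $\sL(\bfone)$ enters the argument exactly through the vanishing of $\lambda$ on the principal ideles of $\uf$ and of $p$, which is nothing but the definition of $\sL(\bfone)$ recorded just above the lemma.
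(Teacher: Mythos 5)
Your proof is correct and follows essentially the same route as the paper: both identify the first-order term as $\eta^*_\frakp=-\log_p\circ\,\e_\frakp$ (the paper phrases this as $\tfrac{d}{ds}\e_\frakp^s(\Psi^{\rm univ}(\sg))|_{s=0}$), then use the interpolation property $\e_\frakp(\rec_K(z))=\Dmd{z_\frakp}$ on local units together with global reciprocity applied to the principal idele of $\uf$ (and, in your variant, of $p$ — an equivalent bookkeeping since $\log_p\uf+\log_p\ol\uf=0$) and the unramifiedness of $\e_\frakp$ at $\pbar$. Your repackaging via the additive idele characters $\lambda,\mu$ is only organizational; all signs and normalizations check out against the paper's computation.
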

\begin{proof}Write $\Psi_s(\sg):=\e_\frakp^s(\Psi^{\rm univ}(\sg))=\e_\frakp(\sg)^{-s}$, and then by definition we have \beq\label{E:47}\frac{d}{ds}\Psi_s(\sg)|_{s=0}=\eta_\frakp(\sg)\cdot \log_p\bfv=\eta^*_\frakp(\sg)\quad(\e_\frakp^s(X)=\bfv^s-1).\eeq Recall that $\sL(\bfone)=\frac{\log_p\ol{\uf}}{h}$, where $h$ is the class number of $K$ and $\uf\in K^\x$ with $\frakp^h=\uf\cO_K$. Evaluating  both sides of \eqref{E:47} for $\sg=\rec_{K_p}(1,\uf)$, we obtain \[h\cdot \eta^*_\frakp(\Frob_{\pbar})=\frac{d}{ds}\Psi_s(\rec_{K_p}(1,\uf))|_{s=0}=\frac{d}{ds}\Psi_s(\rec_{K_p}(\ol{\uf}^{-1},1))|_{s=0}=\log_p\ol{\uf}=h\cdot \sL(\bfone).\]
Since $\eta^*_\frakp$ is unramified at $\pbar$, \[\loc_\pbar(\eta^*_\frakp)=\eta^*_\frakp(\Frob_\pbar)\cdot \Ord_p=\sL(\bfone)\cdot \Ord_p.\] 

From \eqref{E:46} and \eqref{E:47}, we find that \[\eta^*_\pbar(\rec_{K_p}(1,a))=\eta^*_\frakp(\rec_{K_p}(a,1))=-\log_pa\text{ for } a\in \Zp^\x.\] On other hand, $\Psi_s(\rec_{K_p}(\uf,1))=\Psi_s(\rec_{K_p}(1,\ol{\uf}^{-1}))=1$, so $\eta^*_\pbar(\rec_{K_p}(1,\uf))=\eta^*_{\frakp}(\rec_{K_p}(\uf,1))=0$. These equations imply that \[\loc_{\pbar}(\eta^*_{\pbar})(a)=\eta^*_{\pbar}(\rec_{K_p}(1,a))=-\log_p(a)-\sL(\bfone)\cdot \Ord_p(a)\text{ for }a\in\Qp^\x.\qedhere\] \end{proof}

 \begin{thm}\label{T:43}We have the following formula for $\sL$-invariant:
 \[\sL(\chi)=2\sL(\bfone)-\frac{\cL_p'(s,-s,\chi)|_{s=0}}{\cL^*_\frakp(0,\chi)}.\]
 \end{thm}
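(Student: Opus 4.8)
The plan is to produce from the $\Lam_K$-adic cusp form $\sH$ of \defref{D:H.3} a nonzero class $\kappa\in\rmH^1(K,\chi)$ by \thmref{T:Ribet}, to compute $\loc_\pbar(\kappa)$ using the two cohomological descriptions of the $\sL$-invariant in \lmref{L:41} and \lmref{L:43}, and to evaluate the $U_p$-eigenvalue entering that description via \corref{C:3RS}. This is the CM analogue of the Ribet-style argument of \cite[\S4]{DDP11Ann} and \cite[\S3]{Ventullo15CMH}.

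\textbf{The eigensystem.} By \corref{C:3RS} one has $\bB^{-1}\in P\Lam_P$ — this is where the hypothesis $\chi(\pbar)=1$ (through $L_p(0,\chi)=0$) enters — so $\bB$, and hence $\sH=e_\Ord(\theta_\brch^\circ\cG_C)-\aA\bftheta_\brch-\bB\bftheta_{\brch^c}$, has a pole along $P$ which must cancel against the holomorphic $e_\Ord(\theta_\brch^\circ\cG_C)$. Clearing this pole by multiplying $\sH$ by $\bB^{-1}$ and reducing modulo $X^2$, the spectral-decomposition argument of \cite[\S3]{Ventullo15CMH} exhibits the result as a Hecke eigenform modulo $X^2$, yielding a $\Lam_K$-algebra homomorphism $\lam=\lam_\sH:\bfT^\perp\to\Lam_P/(X^2)$ whose eigenvalues reduce modulo $X$ to those of $\bftheta_{\brch^c}$. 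One then checks the hypotheses of \thmref{T:Ribet} with $n=0$ and $\wtd\Psi:=(\Psi^{\rm univ})^c\bmod X^2$. Hypothesis (i) is immediate from \eqref{E:23}, since group-like elements reduce to $1$ modulo the augmentation ideal. Hypothesis (ii) holds because at a split prime $\ell=\frakl\ol\frakl$ the $T_\ell$-eigenvalue of $\bftheta_{\brch^c}$ is $\brch(\Psi^{\rm univ})^c(\Frob_\frakl)+\brch(\Psi^{\rm univ})^c(\Frob_{\ol\frakl})$, using $\brch^{cc}=\brch$ and $(\Psi^{\rm univ})^c(\Frob_\frakl)=\Psi^{\rm univ}(\Frob_{\ol\frakl})$. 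For (iii), $\lam(U_p)$ reduces modulo $X$ to the $U_p$-eigenvalue $\brch^c(\pbar)=\brch(\frakp)$ of $\bftheta_{\brch^c}$, which equals $\brch(\pbar)$ because $\chi=\brch^{1-c}$ and $\chi(\pbar)=1$; so with $\boldsymbol\alpha$ the unramified character sending $\Frob_\pbar$ to $\lam(U_p)$ one has $\brch^{-1}\boldsymbol\alpha\equiv\wtd\Psi|_{G_{K_\pbar}}\equiv 1\pmod{X}$, while their difference is not divisible by $X^2$ — its $X$-linear term is ramified, arising from $(\Psi^{\rm univ})^c|_{G_{K_\pbar}}$, whereas $\brch^{-1}\boldsymbol\alpha$ is unramified — so $\eta:=X^{-1}\bigl(\brch^{-1}\boldsymbol\alpha-\wtd\Psi|_{G_{K_\pbar}}\bigr)\big|_{X=0}$ is a nonzero homomorphism. \thmref{T:Ribet} then delivers $0\neq\kappa\in\rmH^1(K,\chi)$ with $\loc_\pbar(\kappa)=\eta$, which is the formula \eqref{E:20}.

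\textbf{Evaluation.} Passing to the variable $s$ via $\e_\frakp^s$, the $s$-linear term of $\wtd\Psi=(\Psi^{\rm univ})^c$ is $\eta^*_\pbar$ by the definition of $\eta_\pbar$ and \eqref{E:47}, and \lmref{L:43} evaluates $\loc_\pbar(\eta^*_\pbar)=-\log_p-\sL(\bfone)\Ord_p$. Since $\boldsymbol\alpha$ is unramified with $\lam(U_p)(0)=\brch(\pbar)$, the term $\brch^{-1}\boldsymbol\alpha$ in \eqref{E:20} contributes $-\brch(\pbar)^{-1}\lam(U_p)'(0)\cdot\Ord_p$, whence
\[\loc_\pbar(\kappa)=-\log_p-\bigl(\sL(\bfone)+\brch(\pbar)^{-1}\lam(U_p)'(0)\bigr)\Ord_p,\]
so that $y=-1\neq0$ and \lmref{L:41} gives $\sL(\chi)=x/y=\sL(\bfone)+\brch(\pbar)^{-1}\lam(U_p)'(0)$. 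It remains to compute $\brch(\pbar)^{-1}\lam(U_p)'(0)$: the naive CM value $\brch(\pbar)\Psi^{\rm univ}(\Frob_\pbar)$ of the $U_p$-eigenvalue of $\bftheta_{\brch^c}$ contributes $\tfrac{d}{ds}\log\e_\frakp^s(\Psi^{\rm univ}(\Frob_\pbar))\big|_{s=0}=-\log_p\e_\frakp(\Frob_\pbar)=\sL(\bfone)$, while the correction of $\lam(U_p)$ away from this value is governed, through the explicit formula \eqref{E:30} for $\bB$ and the residue identities in the proof of \corref{C:3RS}, by $-B'(0)=-\cL_p'(s,-s,\chi)|_{s=0}/\cL_\frakp^*(0,\chi)$. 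Hence $\brch(\pbar)^{-1}\lam(U_p)'(0)=\sL(\bfone)-B'(0)$, and
\[\sL(\chi)=2\sL(\bfone)-\frac{\cL_p'(s,-s,\chi)|_{s=0}}{\cL_\frakp^*(0,\chi)}.\]

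\textbf{Main obstacle.} The substantive content lies in the middle step: carrying the argument of \cite[\S4]{DDP11Ann} and \cite[\S3]{Ventullo15CMH} through for the CM-congruent family — establishing that, after clearing the common pole of $\aA,\bB,\sH$, the form $\sH$ modulo $X^2$ really is a Hecke eigenform congruent to $\bftheta_{\brch^c}$ — and in tracking the first-order behaviour of its $U_p$-eigenvalue precisely enough to recognise \eqref{E:30} in it. Granting this, the two $\sL$-invariant identities of \lmref{L:41} and \lmref{L:43}, together with the routine normalizations $\e_\frakp^s(X)=\bfv^s-1$ and $\log_p\e_\frakp(\Frob_\pbar)=-\sL(\bfone)$, finish the proof.
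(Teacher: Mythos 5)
Your overall architecture is the paper's: normalize $\sH$, extract a Hecke eigensystem modulo a power of $X$, feed it into \thmref{T:Ribet}, and evaluate via \lmref{L:41}, \lmref{L:43} and \corref{C:3RS}. But the middle step — which you yourself flag as the substantive content — is wrong as stated. The character $\wtd\Psi$ attached to the normalized form $-\bB^{-1}\sH \equiv \frac{\bB^{-1}}{\aA^{-1}}\bftheta_\brch+\bftheta_{\brch^c}-\bB^{-1}e_\Ord(\theta_\brch^\circ\cG_C)\pmod{X^2}$ is \emph{not} $(\Psi^{\rm univ})^c$: since $\chi$ is a ring class character with $\chi(\pbar)=1$, one also has $\chi(\frakp)=\chi^{-1}(\pbar)=1$, so $\aA$ degenerates at $X=0$ for the same reason $\bB$ does, and the first-order eigensystem is the mixture $\Psi_1=1+\bigl((1-u_1)\eta_\frakp+u_1\eta_\pbar\bigr)X$ with $u_1=\tfrac{\aA^{-1}}{\aA^{-1}+\bB^{-1}}\big|_{X=0}$. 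Your choice amounts to decreeing $u_1=1$, i.e.\ $\Ord_P(\bB^{-1})>\Ord_P(\aA^{-1})$, which is not the generic situation (generically both have order $1$). Correspondingly, the derivative of the $U_p$-eigenvalue is $\sL(\bfone)-u_1B'(0)$, not $\sL(\bfone)-B'(0)$. Your final formula survives only because the same factor $u_1$ multiplies both the $\log_p$- and $\Ord_p$-coefficients of $\loc_\pbar(\kappa)$ and cancels in the ratio $x/y$ of \lmref{L:41} — a cancellation your argument never exhibits, since you have suppressed $u_1$ from both places.

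There is a second, related omission: the construction of the eigensystem genuinely splits into cases according to the relative $P$-adic orders of $\aA^{-1}$ and $\bB^{-1}$, and "multiply by $\bB^{-1}$ and reduce mod $X^2$" does not always work. If $\Ord_P(\aA^{-1})>\Ord_P(\bB^{-1})$, then $\bB^{-1}/\aA^{-1}$ has a pole at $P$ and one must instead normalize by $\aA^{-1}$, take $n=\Ord_P(\aA^{-1}/\bB^{-1})>0$ in \thmref{T:Ribet}, and use $\wtd\Psi=\Psi^{\rm univ}+\psi_3X^{n+1}$ (so the class lives in the "deeper" congruence, and this is exactly why \thmref{T:Ribet} is stated modulo $X^{n+2}$ rather than $X^2$). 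If instead the leading terms of $\aA^{-1}$ and $\bB^{-1}$ cancel ($\Ord_P(\aA^{-1}+\bB^{-1})>\Ord_P(\aA^{-1})=\Ord_P(\bB^{-1})$), the normalized form is only a \emph{generalized} eigenform, and one must realize $\lam_\sH$ through a homomorphism into upper-triangular $2\times 2$ matrices acting on the span of $X\theta_\brch^{(\frakp)}$ and $\sH_1$. To repair the proof you must (a) compute the actual mixture $\Psi_1$ (equivalently, track the coefficient $u_1$ through both the $T_\ell$- and $U_p$-eigenvalues and verify it drops out of $x/y$), and (b) treat the remaining two configurations of $\Ord_P(\aA^{-1})$ versus $\Ord_P(\bB^{-1})$.
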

 \begin{proof} 
Let $B(s)=\e_\frakp^s(\bB^{-1})$ be as in \corref{C:3RS}. 
 In view of \lmref{L:41}, \lmref{L:43} and the formula \corref{C:3RS}, it suffices to construct a nonzero element $\kappa\in\rmH^1(K,\chi)$ such that $\loc_{\pbar}(\kappa)$ is a nonzero multiple of 
 \beq\label{E:48}\loc_{\pbar}(\eta^*_\frakp)-\loc_{\pbar}(\eta^*_\pbar)-\frac{d}{ds}B(s)\Big\vert_{s=0}\cdot \Ord_p=\log_p+\left(2\sL(\bfone)-\frac{d}{ds}B(s)\Big\vert_{s=0}\right)\cdot \Ord_p.\eeq
We shall use \thmref{T:Ribet} and adapt the calculations in \cite[\S 3]{Ventullo15CMH} to construct such a class. Recall that the Fourier expansion \[\bftheta_\brch=\sum\limits_{n=1}^\infty \bfa(n,\bftheta_\brch)q^n\]  of the $\Lam$-adic CM form $\bftheta_\brch$ is given by 
\beq\label{E:45} \bfa(\ell,\bftheta_\brch)=\begin{cases}\brch\Psi^{\rm univ}(\Frob_\frakl)+\brch\Psi^{\rm univ}(\Frob_{\ol{\frakl}})&\text{ if }\ell\cO_K=\frakl\ol{\frakl}\text{ is split},\\
0&\text{ if }\ell\text{ is inert,}\\
 \brch\Psi^{\rm univ}(\Frob_\frakl)&\text{ if }\ell\mid d_KC_{\rm s} \textup{ and }\,\frakl\mid (d_K\frakc_{\rm s},\ell).\end{cases} \eeq
The CM form $\bftheta_\brch$ is a $\Lam$-adic newform of tame level $N=d_KC_{\rm s}C_{\rm i}^2$, so we have \beq\label{E:43} T_\ell\bftheta_\brch=\bfa(\ell,\bftheta_\brch)\bftheta_\brch\text{ if }\ell\ndivides N,\quad U_\ell\bftheta_\brch=\bfa(\ell,\bftheta_\brch)\bftheta_\brch\text{ if }\ell\mid N.
 \eeq
 Consider the $\Lam$-adic cusp form $\sH\in\bfS^\perp$ constructed in \eqref{E:32}:
\[\sH= -\frac{1}{A}\bftheta_\brch-\frac{1}{B}\bftheta_{\brch^c}+e_\Ord( \theta_\brch^\circ\cG_C), \textup{ where } A=\aA^{-1}\text{ and }B=\bB^{-1}.\]Put \[b_1:=\frac{B}{X}\Big\vert_{X=0}=\frac{1}{\log_p\bfv}\cdot\frac{d}{ds}B(s)\Big\vert_{s=0}.\] 
There are three cases.
 \subsubsection*{Case (i): $\Ord_P(A+B)=\Ord_P(A)$}
 Define
 \[\sH_1:=(-B)\cdot \sH\pmod{X^2}= \frac{B}{A}\cdot \bftheta_\brch+\bftheta_{\brch^c}-B\cdot e_\Ord(\cG_C \theta_\brch^\circ)\pmod{X^2}.\]
 Let $F=\cW[\frac{1}{p}]$. Put
 \[u_1=\frac{A}{A+B}\Big\vert_{X=0}\in F^\x.\]
 Then $\bfa(1,\sH_1)\con u_1^{-1}\pmod{X}$.  Define the additive homomorphism $\psi_1:G_{K,S}\to F$ by 
  \[\psi_1:=(1-u_1)\eta_\frakp +u_1\eta_\pbar\]
and the character $\Psi_1\colon G_{K,S}\to \Lam_P/(X^2)$ by 
 \[\Psi_1=1+\psi_1X\pmod{X^2}.\]
By \eqref{E:43}, $\cG_C\con 1\pmod{X}$ \eqref{E:310} and the equations
 \begin{align}\label{E:42}&X\bftheta_\brch\con X\bftheta_{\brch^c}\con X\theta_\brch^{(\frakp)}\pmod{X^2},\quad X\sH_1=u_1^{-1}X\theta_\brch^{(\frakp)}\pmod{X^2};\\
 \label{E:44}&U_p\theta_\brch^\circ=\brch(\frakp)\theta_\brch^\circ+\brch(\pbar)\theta_\brch^{(\frakp)}\quad (\brch(\frakp)=\brch(\pbar)),\end{align}
we verify that $\sH_1$ is an eigenform modulo $X^2$ with\begin{align*}
 T_\ell \sH_1=&(\brch\Psi_1(\Frob_\frakl)+\brch\Psi_1(\Frob_{\ol{\frakl}}))\sH_1\text{ if $\ell\ndivides pN$ and $\ell\cO_K=\frakl\ol{\frakl}$ is split},\\
U_p\sH_1=&\brch(\pbar)(1+X(\eta_\frakp(\Frob_{\pbar})-u_1 b_1)) \sH_1.
 \end{align*}
 Since $\sH\in \bfS^\perp$, this induces a homomorphism $\lam_{\sH}:\bfT^\perp\to \Lam_P/(X^2)$ defined by 
 \[ \lam_{\sH}(t):=\bfa(1,t\cdot \sH_1)/\bfa(1,\sH_1)=\bfa(1,t\cdot \sH)/\bfa(1,\sH)\pmod{X^2}\] with 
 \[\lam_{\sH}(T_\ell)=\brch\Psi_1(\Frob_\frakl)+\brch\Psi_1(\Frob_{\ol{\frakl}})\text{ if $\ell\cO_K=\frakl\ol{\frakl}$ is split},\,\,
\brch(\pbar)^{-1}\lam_{\sH}(U_p)=1+X(\eta_\frakp(\Frob_{\pbar})-u_1 b_1).\]
By \thmref{T:Ribet} with $\wtd\Psi=\Psi_1$ and $n=0$, we find that
 there exists a nonzero class $\kappa\in \rmH^1(K,\chi)$ with  \[\loc_{\pbar}(\kappa)=\loc_{\pbar}(\eta_\frakp)-u_1 b_1\cdot \Ord_p-\loc_{\pbar}(\psi_1)=u_1(\loc_{\pbar}(\eta_{\frakp})-\loc_{\pbar}(\eta_{\pbar})-b_1\cdot \Ord_p).\]
  \subsubsection*{Case (ii): $\Ord_P(A+B)> \Ord_P(B)=\Ord_P(A)$} In this case, $\sH_1$ is not an eigenform of the Hecke algebra $\bfT$ but a generalized eigenform. Put
 \[u_2=\frac{B}{A}\Big\vert_{X=0}\in F^\x.\]
  Define the additive homomorphism $\psi_2:G_{K,S}\to F$ by 
  \[\psi_2:=u_2\eta_\frakp+\eta_\pbar\]
  and define the character $\Psi_2:G_{K,S}\to\Lam_K/(X^2)$ by $\Psi_2=1+X\psi_2$. Using the relations \eqref{E:43}, \eqref{E:42}  and \eqref{E:44}, we find that the Hecke algebra $\bfT$ stabilizes the two-dimensional subspace spanned by $X\theta_\brch^{(\frakp)}$ and $\sH_1$. In addition, we have 
  \begin{align*}T_\ell\sH_1
  &=(\brch(\frakl)+\brch(\ol{\frakl}))\sH_1+(\brch(\frakl)\psi_2(\Frob_\frakl)+\brch(\ol{\frakl})\psi_2(\Frob_{\ol{\frakl}}))X\theta_\brch^{(\frakp)},\\
  T_\ell X\theta_\brch^{(\frakp)}&=(\brch(\frakl)+\brch(\ol{\frakl}))X\theta_\brch^{(\frakp)}\text{ if $\ell\ndivides pN$ and $\ell\cO_K=\frakl\ol{\frakl}$ is split},\\
 U_p\sH_1&=\brch(\pbar)\sH_1+\brch(\pbar)((1+u_2)\eta_\frakp(\Frob_{\pbar})-b_1)\cdot X\theta_\brch^{(\frakp)} \\
  U_pX\theta_\brch^{(\frakp)}&=\brch(\pbar)X\theta_\brch^{(\frakp)}.
  \end{align*}
This yields a homomorphism $\lam_{\sH}:\bfT\to {\rm U}\subset \Mat_2(\Cp)$, where ${\rm U}=\stt{\pMX{a}{b}{0}{a} \, \middle| \, a,b\in\cW}$. It is clear that $\lam_{\sH}$ factors through $\bfT^\perp$, and with the identification $\Lam_K/(X^2)\isoto {\rm U},\,X\mapsto \pMX{0}{1}{0}{0}$, we obtain the homomorphism $\lam_{\sH}:\bfT^\perp\to \Lam_P/(X^2)$ with
 \begin{align*}\lam_{\sH}(T_\ell)&=\brch\Psi_2(\Frob_\frakl)+\brch\Psi_2(\Frob_{\ol{\frakl}})\text{ if $\ell\ndivides pN$ and $\ell\cO_K=\frakl\ol{\frakl}$ is split},\\ 
\brch(\pbar)^{-1}\lam_{\sH}(U_p)&=1+ X\left((1+u_2)\eta_\frakp(\Frob_\pbar)-b_1\right).\end{align*}
It follows from \thmref{T:Ribet} that \[\loc_{\pbar}(\kappa)=(1+u_2)\cdot \loc_{\pbar}(\eta_\frakp)-b_1\cdot \Ord_p-\loc_{\pbar}(\psi_2)=\loc_{\pbar}(\eta_\frakp)-\loc_{\pbar}(\eta_\pbar)-b_1\cdot \Ord_p.\]
\subsubsection*{Case (iii): $\Ord_P(A)>\Ord_P(B)>0$} Let $n=\Ord_P(A/B)$ and 
\[u_3=\frac{A}{BX^n}\Big\vert_{X=0}\in F^\x.\]
 Let \[\sH_2:=(-A)\sH\pmod{X^{n+2}}=\bftheta_\brch+\frac{A}{B}\cdot \bftheta_{\brch^c}-A\cdot e_{\Ord}(\theta_\brch^\circ \cG_C) \pmod{X^{n+2}}.\]
 Then $\bfa(1,\sH_2)\con 1\pmod{X}$. Define the additive homomorphism $\psi_3:G_K\to F$ by 
 \[\psi_3=u_3(\eta_\pbar-\chi^{-1}\eta_{\frakp}),\]
and define
 the character $\Psi_3:G_{K,S}\to \Lam_P/(X^{n+2})$ by 
 \begin{align*}\Psi_3
 &=\Psi^{\rm univ}+\psi_3 X^{n+1}\pmod{X^{n+2}}.\end{align*}
Using the equations \eqref{E:43}, \eqref{E:44},
 \begin{align*}\frac{A}{B}\sH_2X&\con\frac{A}{B}\bftheta_\brch X\con\frac{A}{B}\theta_\brch^{(\frakp)}X\con \frac{A}{B}\bftheta_
 {\brch^c} X\pmod{X^{n+2}}; \\
 \left(1-\frac{A}{B}\right)\sH_2X&=\bftheta_\brch X\pmod{X^{n+2}},\end{align*}
we can verify that $\sH_2$ is an eigenform modulo $X^{n+2}$ and 
 \begin{align*}T_\ell \sH_2&=(\brch\Psi_3(\Frob_\frakl)+\brch\Psi_3(\Frob_{\ol{\frakl}}))\sH_2\text{ if $\ell\ndivides pN$ and $\ell\cO_K=\frakl\ol{\frakl}$ is split in $K$},\\
U_p\sH_2&=\brch(\pbar)\left(\Psi^{\rm univ}(\Frob_\pbar)-b_1u_3X^{n+1}\right)\sH_2\pmod{X^{n+2}}.\end{align*}
Likewise we obtain a homomorphism $\lam_\sH:\bfT^\perp\to \Lam_P/(X^{n+2})$ defined by $\lam_\sH(t)=\bfa(1,t\cdot \sH_2)/\bfa(1,\sH_2)$ with 
 \begin{align*}\lam_{\sH}(T_\ell)&=\brch\Psi_3(\Frob_\frakl)+\brch\Psi_3(\Frob_{\ol{\frakl}})\text{ if $\ell\cO_K=\frakl\ol{\frakl}$ is split},\\
\brch(\pbar)^{-1}\lam_{\sH}(U_p)&=\Psi^{\rm univ}(\Frob_\pbar)-b_1u_3X^{n+1}.\end{align*}
It follows from \thmref{T:Ribet} that 
\[\loc_\pbar(\kappa)=-b_1u_3\cdot \Ord_p-\loc_{\pbar}(\psi_3)=u_3(\loc_{\pbar}(\eta_\frakp)-\loc_{\pbar}(\eta_\pbar)-b_1\cdot \Ord_p).\]
In each cases, we see immediately that $\loc_\pbar(\kappa)$ is a multiple of the function in \eqref{E:48}, and the theorem follows.
\end{proof}
\subsection{Proof of  \thmref{T:main}}\label{SS:43}
We are ready to prove \thmref{T:main}. By \remref{R:34}, \[L_p'(0,\chi)=\frac{L_p(s,\chi)}{s}\Big\vert_{s=0}=\cL'_p(s,s,\chi)|_{s=0}.\] 
By \thmref{T:43} and \corref{C:3RS}, we find that the cyclotomic derivative $L_p'(0,\chi)$ equals
\begin{align*}
\cL'_p(s,s,\chi)|_{s=0}
&=2\cL_p'(s,0,\chi)|_{s=0}-\cL'_p(s,-s,\chi)|_{s=0}\\
&=2\sL(\bfone)\cdot \cL_\frakp^*(0,\chi)-\cL_p'(s,-s,\chi)|_{s=0}\\
&=\cL_\frakp^*(0,\chi)\cdot \sL(\chi).
\end{align*}
Now \thmref{T:main} follows from \eqref{E:improved2}.

\def\RomaSuji#1{\setcounter{foo}{#1}\Roman{foo}}%

\def\rs#1{\expandafter{\romannumeral#1}}%
\def\RS#1{\uppercase\rs{#1}}%

\newcommand{\Liminj}[1]{\raisebox{-1.98ex}%
{$\stackrel{\normalsize\mbox{$\varinjlim$}}{\scriptstyle #1}$}}
\newcommand{\Limproj}[1]{\raisebox{-2.00ex}%
{$\stackrel{\normalsize\mbox{$\varprojlim$}}{\scriptstyle #1}$}}

\def\labelenumi{\textup{(\theenumi)}}

\renewcommand{\thefootnote}{\fnsymbol{footnote}}
\section{Comparison of $\mathscr{L}$-invariants}\label{S:comparison}
\subsection{Benois' $\mathscr{L}$-invariant}
Here we briefly recall the definition of $\mathscr{L}$-invariant by Benois \cite{B2, B3, BH}.
Let $p$ be an odd prime.
Let $\varepsilon =(\zeta_{p^n})_{n\geq 0}$ be primitive $p^n$-th roots of unity such that $\zeta_{p^{n+1}}^p=\zeta_{p^n}$
for any $n\geq 0$.
We put $K_n=\mathbb{Q}_p(\zeta_{p^n})$ and $K_\infty = \bigcup_{n\geq 0}K_n$.
Denote $\Gamma =\operatorname{Gal}(K_{\infty}\slash \mathbb{Q}_p)$ and decompose $\Gamma =\Delta \times \Gamma_1$,
where $\Gamma_1 = \operatorname{Gal}(K_\infty \slash K_1)$.
Let $\chi_{\mathrm{cyc}}:\Gamma \to \mathbb{Z}_p^\times$ be the cyclotomic character.
Let $E\slash \mathbb{Q}_p$ be a finite extension. For $r\in [ 0,1)$, we set
$$
\mathscr{R}_E^{(r)}=\left\{ f(x)=\sum_{n\in \mathbb{Z}} a_n X^n \,  \middle| \, a_n \in E, \, f(X) \, \textup{converges on } \{ X \in \mathbb{C}_p  \mid r\leq |X|_p <1 \} \right\} .
$$
Then the Robba ring with coefficients in $E$ is defined by
$
\mathscr{R}_E=\bigcup_{0\leq r < 1} \mathscr{R}_E^{(r)}
$.
The Robba ring $\mathscr{R}_E$ has actions of $\Gamma$ and a Frobenius operator $\varphi$.

For a $(\varphi , \Gamma)$-module $\mathbb{D}$ over the Robba ring $\mathscr{R}_E$, we put
$\mathscr{D}_{\mathrm{cris}}(\mathbb{D})=(\mathbb{D}[1/t])^\Gamma$, where $t=\sum_{n=1}^\infty \frac{X^n}{n}$.
For each $p$-adic representation $V$ of $G_{\mathbb{Q}_p}=\operatorname{Gal}(\overline{\mathbb{Q}}_p\slash \mathbb{Q}_p)$, we can
associate a $(\varphi, \Gamma)$-module $\mathbb{D}_{\mathrm{rig}}^{\dagger}(V)$.
Fix a generator $\gamma_1 \in \Gamma_1$. For any $(\varphi , \Gamma)$-module $\mathbb{D}$, let $\mathit{H}^i(\mathbb{D})$ be the
cohomology of Fontaine-Herr complex
$$
C_{\varphi, \gamma_1}: \mathbb{D}^\Delta \overset{d_0}{\longrightarrow} \mathbb{D}^\Delta  \oplus \mathbb{D}^\Delta  \overset{d_1}{\longrightarrow} \mathbb{D}^\Delta ,
$$
where $d_0(x)=((\varphi -1)x, (\gamma_1 -1)x)$ and $d_1(y,z)=(\gamma_1 -1)y-(\varphi -1)z$.
Let $\mathbb{D}^*(\chi_{\mathrm{cyc}})=\operatorname{Hom}_{\mathscr{R}_E}(\mathbb{D}, \mathscr{R}_E(\chi_{\mathrm{cyc}}))$ be the Tate dual.
For a $(\varphi , \Gamma)$-module $\mathbb{D}$, define
$$\mathit{H}^1_f(\mathbb{D})=\left\{ \alpha \in \mathit{H}^1(\mathbb{D}) \, \middle| \, D_{\alpha} \textup{ is crystalline}\right\} ,$$
where $D_\alpha$ is the extension class associated to $\alpha$.

From now on, we consider the global situation. Fix a finite set of primes $S$ containing $p$ and denote by $\mathbb{Q}_S \slash \mathbb{Q}$ the
maximal Galois extension of $\mathbb{Q}$ unramified outside $S\cup \{ \infty \}$.
We set $G_{\mathbb{Q},S}=\operatorname{Gal}(\mathbb{Q}_S\slash \mathbb{Q})$.
Let $V$ be a $p$-adic representation of $G_{\mathbb{Q}}=\operatorname{Gal}(\overline{\mathbb{Q}}\slash \mathbb{Q})$
unramified outside $S$ with coefficient in a $p$-adic field $E$.
Let $\mathit{H}^1_f(\mathbb{Q},V)$ be the Bloch-Kato Selmer group defined by
$$
\mathit{H}^1_f(\mathbb{Q},V)=
\operatorname{Ker}\left[ \mathit{H}^1(G_{\mathbb{Q},S},V)\to \bigoplus_{v\in S} \frac{\mathit{H}^1(\mathbb{Q}_v,V)}{\mathit{H}_f^1(\mathbb{Q}_v,V)} \right] .
$$
We also denote the relaxed Selmer group by
$$
\mathit{H}^1_{f, \{ p \}}(\mathbb{Q},V)=
\operatorname{Ker}\left[ \mathit{H}^1(G_{\mathbb{Q},S},V)\to \bigoplus_{v\in S\setminus \{ p \}} \frac{\mathit{H}^1(\mathbb{Q}_v,V)}{\mathit{H}_f^1(\mathbb{Q}_v,V)} \right] .
$$

We assume the following conditions:
\begin{itemize}
\item C1) $\mathit{H}^0(G_{\mathbb{Q},S},V)=\mathit{H}^0(G_{\mathbb{Q},S},V^*(1))=0$.
\item C2) $V$ is crystalline at $p$ and $\mathit{D}_{\mathrm{cris}}(V)^{\varphi =1}=0$.
\item C3) The action of $\varphi$ is semisimple on $\mathit{D}_{\mathrm{cris}}(V)$ at $p^{-1}$.
\item C4) $\mathit{H}^1_f(\mathbb{Q},V^*(1))=0$.
\item C5) $\mathrm{loc}_p:\mathit{H}^1_f(\mathbb{Q},V)\to \mathit{H}^1_f(\mathbb{Q}_p,V)$ is injective.
\end{itemize}

\begin{defn}A $\varphi$-submodule $D$ of $\mathit{D}_{\mathrm{cris}}(V)$ is regular if
$D\cap \operatorname{Fil}^0\mathit{D}_{\mathrm{cris}}(V)=0$ and
$r_{V,D}:\mathit{H}^1_f(\mathbb{Q},V) \to \mathit{D}_{\mathrm{cris}}(V)\slash (\operatorname{Fil}^0\mathit{D}_{\mathrm{cris}}(V)+D)$
is an isomorphism, where $r_{V,D}$ is the map induced by
$r_V=\log_V\circ \mathrm{loc}_p:\mathit{H}^1_f(\mathbb{Q},V) \to \mathit{D}_{\mathrm{cris}}(V)\slash \operatorname{Fil}^0\mathit{D}_{\mathrm{cris}}(V)$
and $\log_V$ is the Bloch-Kato logarithm.
\end{defn}
Let $D\subset \mathit{D}_{\mathrm{cris}}(V)$ be a regular submodule. Then we can decompose $D_0=D$ into $D=D_{-1}\oplus D^{\varphi =p^{-1}}$
with $D_{-1}^{\varphi =p^{-1}}=0$.
Let $\mathit{F}_0\mathit{D}_{\mathrm{rig}}^{\dagger}(V)$ and $\mathit{F}_{-1}\mathit{D}_{\mathrm{rig}}^\dagger (V)$ be the $(\varphi, \Gamma)$-modules associated
to $D_0$ and $D_{-1}$ by Berger's theory.
We set $W=\mathrm{gr}_0 \mathit{D}_{\mathrm{rig}}^{\dagger}(V)$. Assume that all the Hodge-Tate weights are non-negative.
Then
$$
i_W:\mathscr{D}_{\mathrm{cris}}(W)\oplus \mathscr{D}_{\mathrm{cris}}(W) \to \mathit{H}^1(W)
$$
defined by $(x,y)\mapsto \mathrm{cl}(-x, y\log \chi_{\mathrm{cyc}})$ is an isomorphism (\cite[Proposition 1.5.9]{B2}).
Let $i_{W,f}$ and $i_{W,c}$ denote the restriction of $i_W$ on the first and second direct summand respectively.
Then we have $\operatorname{Im}(i_{W,f})=\mathit{H}^1_f(W)$ and a decomposition $\mathit{H}^1(W)=\mathit{H}^1_f(W)\oplus \mathit{H}^1_c(W)$,
where $\mathit{H}^1_c(W)=\operatorname{Im}(i_{W,c})$.

For the dual module $W^*(\chi_{\mathrm{cyc}})$, let
$$
i_{W^*(\chi_{\mathrm{cyc}})}:
\mathscr{D}_{\mathrm{cris}}(W^*(\chi_{\mathrm{cyc}}))\oplus \mathscr{D}_{\mathrm{cris}}(W^*(\chi_{\mathrm{cyc}}))
\to \mathit{H}^1(W^*(\chi_{\mathrm{cyc}}))
$$
be the unique linear map such that
$i_{W^*(\chi_{\mathrm{cyc}})}(\alpha , \beta) \cup i_W(x,y)=[\beta ,x]_W -[\alpha ,y]_W$,
where $[ \, , \, ]_W: \mathscr{D}_{\mathrm{cris}}(W^*(\chi_{\mathrm{cyc}}))\times \mathscr{D}_{\mathrm{cris}}(W)\to E$
denotes the canonical pairing induced by $W^*(\chi_{\mathrm{cyc}})\times W \to \mathscr{R}_E(\chi_{\mathrm{cyc}})$.
Similarly, we can define $i_{W^*(\chi_{\mathrm{cyc}}),f}$, $i_{W^*(\chi_{\mathrm{cyc}}),c}$
and $\mathit{H}^1_c(W^*(\chi_{\mathrm{cyc}}))$ using the map $i_{W^*(\chi_{\mathrm{cyc}})}$.

Let
$$
\kappa_D :\mathit{H}^1_{f,\{ p\}}(\mathbb{Q},V)\to \frac{\mathit{H}^1(\mathbb{Q}_p,V)}{\mathit{H}^1_f(\mathit{F}_0\mathit{D}_{\mathrm{rig}}^{\dagger}(V))}
$$
be the composition of the map $\mathrm{loc}_p :\mathit{H}^1_{f,\{ p\}}(\mathbb{Q},V) \to \mathit{H}^1(\mathbb{Q}_p,V)$
and the canonical projection.
Then $\kappa_D$ is an isomorphism (\cite[Lemma 3.1.4]{B3}).
We denote 
$$
\mathit{H}^1(V,D)=
\kappa_D^{-1}(\mathit{H}^1(\mathit{F}_0\mathit{D}_{\mathrm{rig}}^{\dagger}(V))\slash \mathit{H}^1_f(\mathit{F}_0\mathit{D}_{\mathrm{rig}}^{\dagger}(V))).
$$
Then the composition of the map $\mathit{H}^1(V,D)\to \mathit{H}^1(\mathit{F}_0\mathit{D}_{\mathrm{rig}}^{\dagger}(V))\to \mathit{H}^1(W)$
induces an isomorphism $\mathit{H}^1(V,D)\simeq \mathit{H}^1(W)\slash \mathit{H}^1_f(W)$.
We consider the following diagram:
\[\xymatrix{
\mathscr{D}_{\mathrm{cris}}(W)\ar[r]^{i_{W,f}}  & \mathit{H}^1_f(W)  \\
\mathit{H}^1(V,D) \ar[u]^{\rho_{W,f}}  \ar[r]  \ar[d]_{\rho_{W,c}}& \mathit{H}^1(W)\ar[u]_{p_{W,f}} \ar[d]^{p_{W,c}} \\
\mathscr{D}_{\mathrm{cris}}(W)\ar[r]^{i_{W,c}}  & \mathit{H}^1_c(W), \\
}\]
where $p_{W,f}$ and $p_{W,c}$ are the canonical projections, and $\rho_{W,f}$ and $\rho_{W,c}$ are defined
as the unique maps making this diagram commute.
Note that $\rho_{W,c}$ is an isomorphism.

Now we define the $\mathscr{L}$-invariant associated to $V$ and $D$ by
$$\mathscr{L}(V,D)=\operatorname{det}\left( \rho_{W,f}\circ \rho_{W,c}^{-1} \,  \middle| \, \mathscr{D}_{\mathrm{cris}}(W) \right) .$$

\begin{remark}
In \cite{B2}, the choice of the sign of the $\mathscr{L}$-invariant is slightly different from \cite{B3, BH}.
Here we follow the definition given in \cite{B3, BH}.
\end{remark}

Next we consider the dual construction of the $\mathscr{L}$-invariant.
Let $D$ be a regular submodule of $\mathit{D}_{\mathrm{cris}}(V)$ and put
$$
D^\perp =D^\perp_0=\operatorname{Hom}_E(\mathit{D}_{\mathrm{cris}}(V)\slash D , \mathit{D}_{\mathrm{cris}}(E(1)))
$$
and
$$
D^\perp_1 =\operatorname{Hom}_E(\mathit{D}_{\mathrm{cris}}(V)\slash D_{-1} , \mathit{D}_{\mathrm{cris}}(E(1))).
$$
We denote by $\mathit{F}_0\mathit{D}_{\mathrm{rig}}^{\dagger}(V^*(1))$ (resp. $\mathit{F}_1\mathit{D}_{\mathrm{rig}}^{\dagger}(V^*(1))$)
the $(\varphi,\Gamma )$-submodule of $\mathit{D}_{\mathrm{rig}}^{\dagger}(V^*(1))$ associated to $D^\perp_0$ (resp. $D^\perp_1$).
Then we have a short exact sequence
$$
0\to \mathit{F}_1\mathit{D}_{\mathrm{rig}}^{\dagger}(V^*(1)) \to \mathit{F}_0\mathit{D}_{\mathrm{rig}}^{\dagger}(V^*(1)) \to W^*(\chi_{\mathrm{cyc}})\to 0.
$$
Let
$$
\kappa_{D^\perp}:
\mathit{H}^1_f(\mathbb{Q},V^*(1))\to \frac{\mathit{H}^1(\mathbb{Q}_p,V^*(1))}{\mathit{H}^1_f(\mathbb{Q}_p,V^*(1))+\mathit{H}^1(\mathit{F}_0\mathit{D}_{\mathrm{rig}}^{\dagger}(V^*(1)))}
$$
be the map obtained by the composition of $\mathrm{loc}_p$ with the canonical projection.
We set
$$
\mathit{H}^1(V^*(1),D^\perp)=
\kappa_{D^\perp}^{-1}
\left(
\mathit{H}^1(\mathit{F}_1\mathit{D}_{\mathrm{rig}}^{\dagger}(V^*(1)))\slash (\mathit{H}^1_f(\mathbb{Q}_p,V^*(1))+ \mathit{H}^1(\mathit{F}_0\mathit{D}_{\mathrm{rig}}^{\dagger}(V^*(1))))
\right).
$$
Then the composition of the maps
$$
\mathit{H}^1(V^*(1),D^\perp) \to \mathit{H}^1(\mathit{F}_1\mathit{D}_{\mathrm{rig}}^{\dagger}(V^*(1))) \to \mathit{H}^1(W^*(\chi_{\mathrm{cyc}}))
$$
induces an isomorphism $\mathit{H}^1(V^*(1),D^\perp)\simeq \mathit{H}^1(W^*(\chi_{\mathrm{cyc}}))\slash \mathit{H}^1_f(W^*(\chi_{\mathrm{cyc}}))$.
We consider the following diagram:
\[\xymatrix@C=50pt{
\mathscr{D}_{\mathrm{cris}}(W^*(\chi_{\mathrm{cyc}}))\ar[r]^{i_{W^*(\chi_{\mathrm{cyc}}),f}}  & \mathit{H}^1_f(W^*(\chi_{\mathrm{cyc}}))  \\
\mathit{H}^1(V^*(1),D^\perp) \ar[u]^{\rho_{W^*(\chi_{\mathrm{cyc}}),f}}  \ar[r]  \ar[d]_{\rho_{W^*(\chi_{\mathrm{cyc}}),c}}& \mathit{H}^1(W^*(\chi_{\mathrm{cyc}}))\ar[u]_{p_{W^*(\chi_{\mathrm{cyc}}),f}} \ar[d]^{p_{W^*(\chi_{\mathrm{cyc}}),c}} \\
\mathscr{D}_{\mathrm{cris}}(W^*(\chi_{\mathrm{cyc}}))\ar[r]^{i_{W^*(\chi_{\mathrm{cyc}}),c}}  & \mathit{H}^1_c(W^*(\chi_{\mathrm{cyc}})), \\
}\]
where $p_{W^*(\chi_{\mathrm{cyc}}),f}$ and $p_{W^*(\chi_{\mathrm{cyc}}),c}$ are the canonical projections,
and $\rho_{W^*(\chi_{\mathrm{cyc}}),f}$ and $\rho_{W^*(\chi_{\mathrm{cyc}}),c}$ are defined
as the unique maps making this diagram commute.
Note that $\rho_{W^*(\chi_{\mathrm{cyc}}),c}$ is an isomorphism.

We define the $\mathscr{L}$-invariant associated to $V^*(1)$ and $D^\perp$ by
$$\mathscr{L}(V^*(1),D^\perp)=
(-1)^e\operatorname{det}
\left( \rho_{W^*(\chi_{\mathrm{cyc}}),f}\circ \rho_{W^*(\chi_{\mathrm{cyc}}),c}^{-1} \,  \middle| \, \mathscr{D}_{\mathrm{cris}}(W^*(\chi_{\mathrm{cyc}})) \right) ,$$
where $e=\operatorname{dim}_E\mathscr{D}_{\mathrm{cris}}(W^*(\chi_{\mathrm{cyc}}))$.
\begin{prop}\label{dual-formula}
$\mathscr{L}(V^*(1),D^\perp)=(-1)^e
\mathscr{L}(V,D).$
\end{prop}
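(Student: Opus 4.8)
The plan is to express both $\mathscr{L}$-invariants as determinants of endomorphisms of $\mathscr{D}_{\mathrm{cris}}(W)$, respectively $\mathscr{D}_{\mathrm{cris}}(W^*(\chi_{\mathrm{cyc}}))$, and then to identify the second endomorphism with the adjoint of the first relative to the canonical pairing $[\,,\,]_W$, the bridge being global Poitou--Tate duality. First I would rewrite the definitions. Since $\mathit{H}^1(W)=\mathit{H}^1_f(W)\oplus\mathit{H}^1_c(W)$ and $\rho_{W,f},\rho_{W,c}$ are the unique maps with $i_{W,f}\circ\rho_{W,f}=p_{W,f}$ and $i_{W,c}\circ\rho_{W,c}=p_{W,c}$ on the image of $\mathit{H}^1(V,D)$ in $\mathit{H}^1(W)$, every $h\in\mathit{H}^1(V,D)$ has image $i_{W,f}(x)+i_{W,c}(y)$ in $\mathit{H}^1(W)$ with $y=\rho_{W,c}(h)$ and $x=\rho_{W,f}(h)$. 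Putting $T:=\rho_{W,f}\circ\rho_{W,c}^{-1}$ (defined since $\rho_{W,c}$ is an isomorphism), the image of $\mathit{H}^1(V,D)$ in $\mathit{H}^1(W)$ is $\{\,i_{W,f}(Ty)+i_{W,c}(y)\mid y\in\mathscr{D}_{\mathrm{cris}}(W)\,\}$ and $\mathscr{L}(V,D)=\det T$; symmetrically, with $T^*:=\rho_{W^*(\chi_{\mathrm{cyc}}),f}\circ\rho_{W^*(\chi_{\mathrm{cyc}}),c}^{-1}$, the image of $\mathit{H}^1(V^*(1),D^\perp)$ in $\mathit{H}^1(W^*(\chi_{\mathrm{cyc}}))$ is $\{\,i_{W^*(\chi_{\mathrm{cyc}}),f}(T^*\beta)+i_{W^*(\chi_{\mathrm{cyc}}),c}(\beta)\mid\beta\in\mathscr{D}_{\mathrm{cris}}(W^*(\chi_{\mathrm{cyc}}))\,\}$ and $\mathscr{L}(V^*(1),D^\perp)=(-1)^e\det T^*$. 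So it suffices to prove $\det T^*=\det T$, and for that it is enough to show that $T^*$ is the adjoint of $T$ with respect to $[\,,\,]_W\colon\mathscr{D}_{\mathrm{cris}}(W^*(\chi_{\mathrm{cyc}}))\times\mathscr{D}_{\mathrm{cris}}(W)\to E$, which is perfect because $W$ is crystalline, so the two $\mathscr{D}_{\mathrm{cris}}$-spaces are $E$-linear duals of one another and in particular share the common dimension $e$; an endomorphism and its adjoint relative to a perfect pairing have equal determinants.

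Second, I would compute the local cup-product pairing $\langle\,,\,\rangle\colon\mathit{H}^1(W^*(\chi_{\mathrm{cyc}}))\times\mathit{H}^1(W)\to\mathit{H}^2(\mathscr{R}_E(\chi_{\mathrm{cyc}}))\cong E$ on the four summands cut out by the $i$-maps. Directly from the defining relation $i_{W^*(\chi_{\mathrm{cyc}})}(\alpha,\beta)\cup i_W(x,y)=[\beta,x]_W-[\alpha,y]_W$ one reads off $\langle i_{W^*(\chi_{\mathrm{cyc}}),f}(\alpha),i_{W,f}(x)\rangle=0$, $\langle i_{W^*(\chi_{\mathrm{cyc}}),c}(\beta),i_{W,c}(y)\rangle=0$, $\langle i_{W^*(\chi_{\mathrm{cyc}}),f}(\alpha),i_{W,c}(y)\rangle=-[\alpha,y]_W$ and $\langle i_{W^*(\chi_{\mathrm{cyc}}),c}(\beta),i_{W,f}(x)\rangle=[\beta,x]_W$. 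In particular $\mathit{H}^1_f(W^*(\chi_{\mathrm{cyc}}))$ and $\mathit{H}^1_f(W)$ annihilate each other and $\langle\,,\,\rangle$ is perfect (local Tate duality for $(\varphi,\Gamma)$-modules).

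Third comes the key global input: the image of $\mathit{H}^1(V,D)$ in $\mathit{H}^1(W)$ and the image of $\mathit{H}^1(V^*(1),D^\perp)$ in $\mathit{H}^1(W^*(\chi_{\mathrm{cyc}}))$ are orthogonal under $\langle\,,\,\rangle$. Indeed $\mathit{H}^1(V,D)\subset\mathit{H}^1_{f,\{p\}}(\mathbb{Q},V)$ and $\mathit{H}^1(V^*(1),D^\perp)\subset\mathit{H}^1_f(\mathbb{Q},V^*(1))$, so for $z$ in the first group and $z^*$ in the second the global reciprocity law gives $\sum_{v}\langle\mathrm{loc}_v(z^*),\mathrm{loc}_v(z)\rangle_v=0$; the term at $v=\infty$ vanishes because $p$ is odd, and the terms at finite $v\neq p$ vanish because there $z\in\mathit{H}^1_f(\mathbb{Q}_v,V)$ and $z^*\in\mathit{H}^1_f(\mathbb{Q}_v,V^*(1))$, and these two (unramified) subgroups annihilate each other under local Tate duality. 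Hence $\langle\mathrm{loc}_p(z^*),\mathrm{loc}_p(z)\rangle_p=0$. By construction $\mathrm{loc}_p(z)$ comes from $\mathit{H}^1(\mathit{F}_0\mathit{D}_{\mathrm{rig}}^{\dagger}(V))$ and $\mathrm{loc}_p(z^*)$ from $\mathit{H}^1(\mathit{F}_1\mathit{D}_{\mathrm{rig}}^{\dagger}(V^*(1)))$; since $\mathit{F}_1\mathit{D}_{\mathrm{rig}}^{\dagger}(V^*(1))$ is the annihilator of $\mathit{F}_{-1}\mathit{D}_{\mathrm{rig}}^{\dagger}(V)$ under the self-dual pairing on $\mathit{D}_{\mathrm{rig}}^{\dagger}(V)$, the pairing at $p$ descends, through the quotients $\mathit{F}_0\mathit{D}_{\mathrm{rig}}^{\dagger}(V)/\mathit{F}_{-1}\mathit{D}_{\mathrm{rig}}^{\dagger}(V)=W$ and $\mathit{F}_1\mathit{D}_{\mathrm{rig}}^{\dagger}(V^*(1))/\mathit{F}_0\mathit{D}_{\mathrm{rig}}^{\dagger}(V^*(1))=W^*(\chi_{\mathrm{cyc}})$, exactly to $\langle\,,\,\rangle$; a diagram chase with the Fontaine--Herr complexes then yields the asserted orthogonality. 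Conditions C1 and C4 are what make $\mathit{H}^1(V,D)$ and $\mathit{H}^1(V^*(1),D^\perp)$ be cut out inside these Selmer groups and have the expected dimension $e$.

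Finally, substituting $h=i_{W,f}(Ty)+i_{W,c}(y)$ and $h^*=i_{W^*(\chi_{\mathrm{cyc}}),f}(T^*\beta)+i_{W^*(\chi_{\mathrm{cyc}}),c}(\beta)$ into the bilinear expansion of $\langle h^*,h\rangle$ and applying the four identities of the second step, the orthogonality $\langle h^*,h\rangle=0$ for all $y$ and $\beta$ collapses to $[\beta,Ty]_W=[T^*\beta,y]_W$; thus $T^*$ is the $[\,,\,]_W$-adjoint of $T$, so $\det T^*=\det T=\mathscr{L}(V,D)$ and $\mathscr{L}(V^*(1),D^\perp)=(-1)^e\det T^*=(-1)^e\mathscr{L}(V,D)$. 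I expect the main obstacle to be the third step: one must verify carefully, using Berger's filtration and the explicit Fontaine--Herr complexes, that the local Tate pairing at $p$ on $\mathit{H}^1(\mathbb{Q}_p,V^*(1))\times\mathit{H}^1(\mathbb{Q}_p,V)$ restricts, on the relevant subquotients, to precisely the pairing $\langle\,,\,\rangle$ defining the $\mathscr{L}$-invariants, and that the Selmer conditions away from $p$ are mutually annihilating under Poitou--Tate; once these compatibilities are in place, the remainder is the elementary linear algebra of adjoint operators.
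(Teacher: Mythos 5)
The paper gives no argument of its own here—it simply cites \cite[Proposition 2.2.7]{B2} and \cite[Proposition 2.3.8]{BH}—and your proof is a correct reconstruction of exactly that cited argument: realize both invariants as $\det T$ and $(-1)^e\det T^*$, show the images of $\mathit{H}^1(V,D)$ and $\mathit{H}^1(V^*(1),D^\perp)$ are orthogonal via Poitou–Tate reciprocity together with the mutual annihilation of the Bloch–Kato conditions away from $p$, and conclude that $T^*$ is the $[\,,\,]_W$-adjoint of $T$. The only point deserving the extra care you already flag is the step at $p$: $\mathrm{loc}_p(z^*)$ is a priori only in $\mathit{H}^1(\mathit{F}_1\mathit{D}_{\mathrm{rig}}^{\dagger}(V^*(1)))+\mathit{H}^1_f(\mathbb{Q}_p,V^*(1))$, so one must check (as Benois does) that the $\mathit{H}^1_f(\mathbb{Q}_p,V^*(1))$ ambiguity dies in $\mathit{H}^1(W^*(\chi_{\mathrm{cyc}}))/\mathit{H}^1_f(W^*(\chi_{\mathrm{cyc}}))$ and contributes nothing to the cup product with $\mathit{H}^1(\mathit{F}_0\mathit{D}_{\mathrm{rig}}^{\dagger}(V))$.
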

\begin{proof}
See \cite[Proposition 2.2.7]{B2} and \cite[Proposition 2.3.8]{BH}.
\end{proof}

Using this $\mathscr{L}$-invariant, Benois formulated the exceptional zero conjecture for general crystalline case including non-critical range.

\subsection{Comparison of $\mathscr{L}$-invariants}
Let $K$ be an imaginary quadratic field and $p$ a prime such that $p\mathcal{O}_K=\mathfrak{p}\overline{\mathfrak{p}}$.
Let $\chi: \operatorname{Gal}(H\slash K)\to \overline{\mathbb{Q}}_p^\times$ be a non-trivial ring class character.
Let $E$ be a $p$-adic field containing all of the values of $\chi$.
Assume that $\chi$ is unramified at places above $p$ and $\chi (\overline{\mathfrak{p}})=1$.
Now we consider the case $V=(\operatorname{Ind}^{\mathbb{Q}}_K \chi)^*(\boldsymbol{\varepsilon}_{\rm{cyc}})$.
In this case, we have $V^*(1)=\operatorname{Ind}^{\mathbb{Q}}_K \chi$ and
it is known that
$\mathit{H}^1_f(\mathbb{Q},V)=\mathit{H}^1_f(K,\chi^{-1}(1))=(\mathcal{O}_H^\times \otimes E)[\chi]$,
$\mathit{H}^1_{f, \{ p  \}}(\mathbb{Q},V)=
\mathit{H}^1_{\{ \mathfrak{p}, \overline{\mathfrak{p}}\} }(K,\chi^{-1}(1))=(\mathcal{O}_{H}[1\slash p]^\times \otimes E)[\chi]$
and $\mathit{H}^1_f(\mathbb{Q},V^*(1))=\mathit{H}^1_f(K,\chi)=0$.
For $V=(\operatorname{Ind}^{\mathbb{Q}}_K \chi)^*(1)$, it is easy to see that $V$ satisfies the conditions C1) -- C5).

Denote
$$V^+=\{ v \in V|_{G_K} \mid \sigma (v) =\chi^{-1}(\sigma) \boldsymbol{\varepsilon}_{\rm{cyc}}(\sigma)v \textup{ for all }\sigma \in G_K\}$$
and
$$V^-=\{ v \in V|_{G_K} \mid \sigma (v) =\chi^{-1}(c\sigma c) \boldsymbol{\varepsilon}_{\rm{cyc}}(\sigma)v \textup{ for all }\sigma \in G_K\}.$$
Since $\chi \neq {\chi}^c$, we have a canonical decomposition $V|_{G_K} =V^+\oplus V^-$.
Put
$V_{\mathfrak{p}}=V^+|_{G_{K_{\mathfrak{p}}}}$ and $V_{\overline{\mathfrak{p}}}=V^-|_{G_{K_{\overline{\mathfrak{p}}}}}$.
Then the natural map $\iota : V|_{\mathbb{Q}_p}\to V_{\mathfrak{p}} \oplus V_{\overline{\mathfrak{p}}}$ becomes an isomorphism.
Hence, $\mathit{H}^1(\mathbb{Q}_p,V)=\mathit{H}^1(\mathbb{Q}_p,(\operatorname{Ind}^{\mathbb{Q}}_K \chi)^*(1))$ can be identified with
$\mathit{H}^1(K_{\mathfrak{p}},\chi^{-1}(1))\oplus \mathit{H}^1(K_{\overline{\mathfrak{p}}},\chi^{-1}(1))$.
\begin{defn}
We choose a regular submodule $D$ of $\mathit{D}_{\mathrm{cris}}(V)$ as 
$D=\mathit{D}_{\mathrm{cris}}(\iota^{-1}(V_{\overline{\mathfrak{p}}}))$.
\end{defn}
Then $\mathit{H}^1(\mathit{F}_0\mathit{D}_{\mathrm{rig}}^\dagger (V))$ is identified with $\mathit{H}^1(K_{\overline{\mathfrak{p}}},\chi^{-1}(1))$
under the isomorphism $\mathit{H}^1(\mathit{D}_{\mathrm{rig}}^\dagger (V))\simeq \mathit{H}^1(\mathbb{Q}_p,V)$.
Here we recall that $\mathit{F}_0\mathit{D}_{\mathrm{rig}}^\dagger (V)$ is the $(\varphi,\Gamma)$-submodule of
$\mathit{D}_{\mathrm{rig}}^\dagger (V)$ associated to $D$.
This property also characterizes the choice of the regular submodule $D$.
Then the modified Euler factor associated to $(V, D)$ is given by
$$
\mathcal{E} (V,D)= \operatorname{det}(1-p^{-1}\varphi^{-1}|D )   \operatorname{det}(1-\varphi | \mathit{D}_{\mathrm{cris}}(V)\slash D ) 
=(1-\chi (\overline{\mathfrak{p}}))(1-\chi^{-1}(\mathfrak{p})p^{-1})=0
$$ 
and 
$$\mathcal{E}^+ (V,D)= \operatorname{det}(1-p^{-1}\varphi^{-1}|D_{-1} )   \operatorname{det}(1-p^{-1}\varphi^{-1} | D^\perp ) 
=(1-\chi (\mathfrak{p})p^{-1}),$$
where $\mathcal{E}^+ (V,D)$ is the modified Euler factor which is used in the formula of the exceptional zero conjecture (\cite[Conjecture 4]{B3}).
Note that $\mathcal{E}^+ (V,D)$ coincides with the Euler factor appeared in Conjecture \ref{conj:main}.
Therefore Conjecture \ref{conj:main} is compatible with the exceptional zero conjecture formulated by Benois. 
\begin{prop}
We have $\mathscr{L}(V,D)=-\mathscr{L}(\chi)$, where $\mathscr{L}(\chi)$ is the $\mathscr{L}$-invariant defined in (\ref{E:Linv}).
\end{prop}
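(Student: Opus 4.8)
The plan is to make Benois' first construction of $\mathscr{L}(V,D)$ fully explicit and to identify the line $\mathit{H}^1(V,D)$ with the image under the Kummer map of the unit $u$ of \eqref{E:1gen}; granting that, the proposition reduces to a single normalization computation in local Galois cohomology over $\mathbb{Q}_p$.

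First I would pin down the local data at $p$. Since $\chi$ is a ring class character one has $\chi^c=\chi^{-1}$, and combined with the hypothesis $\chi(\pbar)=1$ this forces $\chi(\frakp)=\chi(\pbar)=1$; hence both local constituents $V_{\frakp}$ and $V_{\pbar}$ of $V|_{G_{\mathbb{Q}_p}}$ are isomorphic to $\boldsymbol{\varepsilon}_{\mathrm{cyc}}|_{G_{\mathbb{Q}_p}}$, so $\varphi$ acts on $\mathit{D}_{\mathrm{cris}}(V)$ by the scalar $p^{-1}$. Therefore $D=D^{\varphi=p^{-1}}$ and $D_{-1}=0$, so that $W=\mathrm{gr}_0\mathit{D}_{\mathrm{rig}}^{\dagger}(V)=\mathit{F}_0\mathit{D}_{\mathrm{rig}}^{\dagger}(V)$ is the rank-one crystalline $(\varphi,\Gamma)$-module attached to $\iota^{-1}(V_{\pbar})$, namely $\mathit{D}_{\mathrm{rig}}^{\dagger}(E(1))$, with non-negative Hodge--Tate weight and $\mathit{H}^1(W)=\mathit{H}^1(K_{\pbar},\chi^{-1}(1))$. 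In particular $\mathscr{D}_{\mathrm{cris}}(W)$ is one-dimensional and the determinant in the definition of $\mathscr{L}(V,D)$ is a scalar.

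Next I would identify $\mathit{H}^1(V,D)$ explicitly. Unwinding the definition of $\kappa_D$ through the decomposition $\mathit{H}^1(\mathbb{Q}_p,V)=\mathit{H}^1(K_{\frakp},\chi^{-1}(1))\oplus\mathit{H}^1(K_{\pbar},\chi^{-1}(1))$, and using that $\mathit{F}_0\mathit{D}_{\mathrm{rig}}^{\dagger}(V)$ contributes exactly the $\pbar$-summand, one sees that $\mathit{H}^1(V,D)=\{\kappa\in\mathit{H}^1_{f,\{p\}}(\mathbb{Q},V)\mid \mathrm{loc}_{\frakp}(\kappa)=0\}$ and that the map $\mathit{H}^1(V,D)\to\mathit{H}^1(W)$ appearing in Benois' diagram is $\kappa\mapsto\mathrm{loc}_{\pbar}(\kappa)$. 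Now transport everything to units through $\mathit{H}^1_{f,\{p\}}(\mathbb{Q},V)=(\cO_H[1/p]^\times\ot E)[\chi]$ and local Kummer theory: by $\Gal(H/K)$-equivariance, the condition $\mathrm{loc}_{\frakp}(\kappa)=0$ amounts to the vanishing of $\Ord_{\frakP}$ at the primes above $\frakp$ together with $\log_{\frakp}=0$, and this cuts out precisely the line $V_\chi$ generated by $u$. Hence $\mathit{H}^1(V,D)=E\cdot\kappa_u$, where $\kappa_u$ is the Kummer class of $u$, and its image in $\mathit{H}^1(W)$ is $\mathrm{loc}_{\pbar}(\kappa_u)$.

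Finally I would compute the determinant. By local Kummer theory together with the explicit reciprocity law, the projection of $\mathrm{loc}_{\pbar}(\kappa_u)\in\mathit{H}^1(K_{\pbar},\chi^{-1}(1))$ to $\mathit{H}^1_c(W)$ is proportional to $\Ord_{\pbar}(u)$ --- which is nonzero because $\log_{\frakp}\ux\neq 0$ by the Brumer--Baker theorem --- while its projection to $\mathit{H}^1_f(W)$ is proportional to $\log_{\pbar}(u)$. Feeding this into the diagram defining $\rho_{W,f}$ and $\rho_{W,c}$ yields $\mathscr{L}(V,D)=\rho_{W,f}\circ\rho_{W,c}^{-1}=c\cdot\log_{\pbar}(u)/\Ord_{\pbar}(u)$ for a constant $c$ determined by the normalizations of $i_{W,f}$ (the Bloch--Kato exponential for $E(1)$) and $i_{W,c}$ (the class built from $\log\chi_{\mathrm{cyc}}$), and comparing with \eqref{E:Linv} the proposition becomes the assertion $c=-1$. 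I expect this last point to be the main obstacle: it demands comparing $\exp_{E(1)}$ and its dual $\exp^{*}_{E(1)}$ with the $p$-adic logarithm of \eqref{E:Linv}, tracking the sign in $i_W(x,y)=\mathrm{cl}(-x,y\log\chi_{\mathrm{cyc}})$, and matching the normalization of $\Ord_{\pbar}$ on $H_{\ol{\frakP}}$ with the valuation of $K_{\pbar}=\mathbb{Q}_p$ (recall $H/K$ is unramified at $\pbar$). I would isolate this in a purely local computation for $W=\mathit{D}_{\mathrm{rig}}^{\dagger}(E(1))$, using the description of $\mathit{H}^1$ of rank-one $(\varphi,\Gamma)$-modules in \cite{B2}, and cross-check against the $\mathscr{L}$-invariant of a Dirichlet character having a trivial zero, where the value is classical.
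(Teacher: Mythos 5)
Your approach is the same as the paper's: reduce to the rank-one $(\varphi,\Gamma)$-module $W\simeq\mathscr{R}_E(|x|x)=\mathit{D}_{\mathrm{rig}}^{\dagger}(E(1))$, identify $\mathit{H}^1(V,D)$ with the kernel of $(\mathcal{O}_H[1/p]^\times\otimes E)[\chi]\to H_{\mathfrak{p}}^\times\otimes E$, i.e.\ with the line $V_\chi=E\cdot u$, and read off the determinant from the Kummer-theoretic formulas $p_{W,f}(\kappa(u))=\log_p(u)\cdot i_{W,f}(1)$ and $p_{W,c}(\kappa(u))=\operatorname{ord}_p(u)\cdot i_{W,c}(1)$. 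The paper settles the normalization you flag as ``the main obstacle'' simply by quoting these two formulas from Benois \cite[1.5.6 and 1.5.10]{B2} (together with the remark that the geometric normalization $\operatorname{ord}_p(\mathrm{Fr}_p)=1$ is responsible for the only sign discrepancy with Benois' conventions), so no further local computation with $\exp_{E(1)}$ and $\exp^*_{E(1)}$ is needed.

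One concrete error in your reduction: with your parametrization $\mathscr{L}(V,D)=c\cdot\log_{\pbar}(u)/\Ord_{\pbar}(u)$, the target is $c=+1$, not $c=-1$. Indeed \eqref{E:Linv} already defines $\sL(\chi)=-\log_{\pbar}(u)/\Ord_{\pbar}(u)$, so the desired identity $\mathscr{L}(V,D)=-\sL(\chi)$ is equivalent to $\mathscr{L}(V,D)=+\log_{\pbar}(u)/\Ord_{\pbar}(u)$; the minus sign in the proposition comes entirely from the minus sign built into the definition of $\sL(\chi)$, and Benois' formulas give exactly $c=+1$ (the sign $-x$ in $i_W(x,y)=\mathrm{cl}(-x,y\log\chi_{\mathrm{cyc}})$ does not enter, since it is absorbed into the definition of $i_{W,f}$ and hence of $\rho_{W,f}$). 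Had you carried out your proposed local computation aiming at $c=-1$, you would have concluded the opposite of the proposition. Your justification that $\Ord_{\pbar}(u)=\Ord_{\ol{\frakP}}(\vx)\log_{\frakp}\ux\neq 0$ (so that $\rho_{W,c}$ is indeed an isomorphism) is correct and is implicitly used in the paper as well.
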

\begin{proof}
In this case, $\mathit{F}_0\mathit{D}_{\mathrm{rig}}^\dagger (V)\simeq \mathscr{R}_E(|x|x)$ and $\mathit{F}_{-1}\mathit{D}_{\mathrm{rig}}^\dagger (V)=0$,
where we write $x$ for the character given by the identity map and $|x|$ for $|x|=p^{v_p(x)}$.
Hence we have $W=\operatorname{gr}_0\mathit{D}_{\mathrm{rig}}^\dagger (V)\simeq \mathscr{R}_E(|x|x)$ and
$\mathit{H}^1(W)\simeq \mathit{H}^1(\mathscr{R}_E(|x|x))\simeq \mathit{H}^1(\mathscr{R}_E(\chi_{\mathrm{cyc}}))\simeq \mathit{H}^1(\mathbb{Q}_p, E(1))$.

Define $\alpha_W=i_{W,f}(1)$ and $\beta_W=i_{W,c}(1)$. Let $\kappa : \mathbb{Q}_p^\times \otimes E \to \mathit{H}^1(\mathbb{Q}_p,E(1))$ be the
Kummer map.
Then we have 
$p_{W,f}(\kappa (u))= \log_p u \cdot \alpha_W$ and $p_{W,c}(\kappa (u))=\operatorname{ord}_p(u) \cdot \beta_W$
for $u \in \mathbb{Q}_p^\times \otimes E$ (see \cite[1.5.6 and 1.5.10]{B2} for details).
Since 
$\mathit{H}^1_{f, \{ p   \}}(\mathbb{Q},V)=(\mathcal{O}_{H}[1\slash p]^\times \otimes E)[\chi]$
and $\mathit{H}^1(\mathit{F}_0\mathit{D}_{\mathrm{rig}}^\dagger (V))=\mathit{H}^1(K_{\overline{\mathfrak{p}}},\chi^*(1))$,
one has
\begin{align*}
\mathit{H}^1(V,D)&=
\kappa_D^{-1}(\mathit{H}^1(\mathit{F}_0\mathit{D}_{\mathrm{rig}}^{\dagger}(V))\slash \mathit{H}^1_f(\mathit{F}_0\mathit{D}_{\mathrm{rig}}^{\dagger}(V)))\\
&=\operatorname{Ker}\left[  \mathit{H}^1_{f, \{ p   \}}(\mathbb{Q},V) \to  \mathit{H}^1(K_{\mathfrak{p}},\chi^*(1)) \right] \\
&=\operatorname{Ker}\left[ (\mathcal{O}_{H}[1\slash p]^\times \otimes E)[\chi] \to H_{\mathfrak{p}}^\times \otimes E \right] .
\end{align*}
In this case, $\mathit{H}^1(V,D)$ is an one-dimensional $E$-vector space.

Let $\operatorname{ord}_{\overline{\mathfrak{p}}}$ and $\log_{\overline{\mathfrak{p}}}$ be the elements in
$\mathit{H}^1(K_{\overline{\mathfrak{p}}},\mathbb{Q}_p)=\operatorname{Hom}(G_{K_{\overline{\mathfrak{p}}}},\mathbb{Q}_p)$
corresponding to $\operatorname{ord}_p$ and $\log_p$ under the identification 
$\operatorname{Hom}(G_{K_{\overline{\mathfrak{p}}}},\mathbb{Q}_p)=\operatorname{Hom}(G_{\mathbb{Q}_p},\mathbb{Q}_p)$.
They can be viewed as maps $\operatorname{ord}_{\overline{\mathfrak{p}}}, \log_{\overline{\mathfrak{p}}} : K_{\overline{\mathfrak{p}}}^\times \to \mathbb{Q}_p$
via the geometrically normalized reciprocity law map $\operatorname{rec}_{\overline{\mathfrak{p}}}:K_{\overline{\mathfrak{p}}}^\times \to G_{K_{\overline{\mathfrak{p}}}}$.

We fix a non-zero element $u$ in the one-dimensional $E$-vector space
$
\mathit{H}^1(V,D)
$.
Then we have
$$\mathscr{L}(V,D)=\frac{\log_{\overline{\mathfrak{p}}}(u)}{\operatorname{ord}_{\overline{\mathfrak{p}}}(u)}$$
by the definition.
This shows $\mathscr{L}(V,D)=-\mathscr{L}(\chi)$.
\end{proof}

Next we compute the dual construction.
In this case, it is easy to see
$$\mathit{H}^1(\mathit{F}_1\mathit{D}_{\mathrm{rig}}^\dagger (V^*(1)))
=\mathit{H}^1(\mathit{D}_{\mathrm{rig}}^\dagger (V^*(1)))
=\mathit{H}^1(\mathbb{Q}_p,V^*(1))
=\mathit{H}^1(K_{\mathfrak{p}},\chi)\oplus \mathit{H}^1(K_{\overline{\mathfrak{p}}},\chi),
$$
$$\mathit{H}^1(\mathit{F}_0\mathit{D}_{\mathrm{rig}}^\dagger (V^*(1)))=\mathit{H}^1(K_{\mathfrak{p}},\chi)
$$
and
$$
\mathit{H}^1_f(\mathbb{Q}_p,V^*(1))=\mathit{H}^1_f(K_{\mathfrak{p}},\chi)\oplus \mathit{H}^1_f(K_{\overline{\mathfrak{p}}},\chi)=0.
$$
Hence we have 
$$
\mathit{H}^1(V^*(1),D^\perp)
=\mathit{H}^1_{f, \{ p \} }(\mathbb{Q},V^*(1))=\mathit{H}^1_{f, \{ \mathfrak{p}, \overline{\mathfrak{p}} \} }(K,\chi) 
=\mathit{H}^1(K,\chi),
$$
which is an one-dimensional $E$-vector space.
Moreover 
$$
\mathit{H}^1(W^*(\chi_{\mathrm{cyc}}))\simeq 
\mathit{H}^1(\mathit{F}_1\mathit{D}_{\mathrm{rig}}^{\dagger}(V^*(1)))\slash \mathit{H}^1(\mathit{F}_0\mathit{D}_{\mathrm{rig}}^{\dagger}(V^*(1)))
\simeq \mathit{H}^1(K_{\overline{\mathfrak{p}}},\chi)\simeq \mathit{H}^1(\mathbb{Q}_p,E).
$$
Define
$\alpha_{W^*(\chi_{\mathrm{cyc}})}=i_{W^*(\chi_{\mathrm{cyc}}),f}(1)$ and $\beta_{W^*(\chi_{\mathrm{cyc}})}=i_{W^*(\chi_{\mathrm{cyc}}),c}(1)$.
Under the identification
$$\mathit{H}^1(W^*(\chi_{\mathrm{cyc}}))\simeq \mathit{H}^1(\mathscr{R}_E)\simeq \mathit{H}^1(\mathbb{Q}_p,E),$$
one has $\alpha_{W^*(\chi_{\mathrm{cyc}})}=-\operatorname{ord}_p$ and $\beta_{W^*(\chi_{\mathrm{cyc}})}=\log_p$ (see \cite[1.5.6 and 1.5.10]{B2}).
Note that our normalization of the reciprocity law map is different from Benois \cite{B2,B3,BH}.
More precisely, we have $\operatorname{ord}_p (\mathrm{Fr}_p)=1$,
where $\mathrm{Fr}_p$ is the geometric Frobenius. This gives the difference of the sign with Benois' description.

Fix a non-zero element $\eta \in \mathit{H}^1(V^*(1),D^\perp)=\mathit{H}^1(K,\chi)$.
Then we can write 
$$\kappa_{D^\perp}(\eta)=x\cdot \operatorname{ord}_{\overline{\mathfrak{p}}}+y\cdot \log_{\overline{\mathfrak{p}}}
=(-x)\cdot (-\operatorname{ord}_{\overline{\mathfrak{p}}})+y\cdot \log_{\overline{\mathfrak{p}}}$$
in $\mathit{H}^1(W^*(\chi_{\mathrm{cyc}}))\simeq\mathit{H}^1(K_{\overline{\mathfrak{p}}},\chi)$ and we have
$\displaystyle \mathscr{L}(V^*(1),D^\perp)=(-1)^e \left( -\frac{x}{y}\right)$,
where $e=\operatorname{dim}_E D^{\varphi=p^{-1}}=1$.
By Proposition \ref{dual-formula}, we get $\mathscr{L}(V^*(1),D^\perp)=-\mathscr{L}(V,D)=\mathscr{L}(\chi)$ again.
Therefore this gives an alternative proof of Lemma \ref{L:41}.

\section*{Acknowledgement}This work has its root in the authors' participation in the program \emph{Advancing Strategic International Networks to Accelerate
the Circulation of Talented Researchers} during 2015--2017. The authors would like to thank Shinichi Kobayashi and Nobuo Tsuzuki for their support and hospitality during the period of this program. The authors are grateful to the referees for the suggestions and comments on the improvement of the manuscript.

\bibliographystyle{amsalpha}
\bibliography{mybib}
\end{document}